\newcommand{\N}{\ensuremath{\mathbb{N}}}
\newcommand{\Z}{\ensuremath{\mathbb{Z}}}
\newcommand{\bbN}{\ensuremath{\mathbb{N}}}
\newcommand{\bbZ}{\ensuremath{\mathbb{Z}}}
\newcommand{\Rcal}{\ensuremath{\mathcal{R}}}
\newcommand{\Ccal}{\ensuremath{\mathcal{C}}}
\newcommand{\Dinf}{\ensuremath{D_\infty}}
\newcommand{\free}{\ast\!\! }
\DeclareMathOperator{\defi}{:\!=}
\DeclareMathOperator{\Sqcup}{\bigsqcup}
\theoremstyle{definition}
\theoremstyle{plain}
\newtheorem{theorem}{Theorem}[section]
\newtheorem{lemma}[theorem]{Lemma}
\newtheorem{thm}[theorem]{Theorem}
\newtheorem{coro}[theorem]{Corollary}
\newtheorem{remark}[theorem]{Remark}
\newtheorem{conj}{Conjecture}
\newtheorem*{conj*}{Conjecture}
\theoremstyle{remark}
\newenvironment{txteq*}
{
	\begin{equation*}
		\begin{minipage}[t]{0.85\textwidth} 
			\em                                
		}
		{\end{minipage}\end{equation*}\ignorespacesafterend}
\title{From cycles to circles in Cayley graphs
\thanks
{{\it Key Words}: Cayley graphs, Hamilton circles, Infinite graph, Infinite groups}
\thanks {{\it Mathematics Subject Classification 2010}: 05C25, 05C45, 05C63, 20E06, 20F05, 37F20.
 }}
\author{Babak Miraftab \and Tim R\"uhmann \smallskip \and Department of Mathematics\\
 University of Hamburg\\
 Bundesstra\ss e 55\\
 20146 Hamburg\\
 Germany}
\date{\today}
\begin{document}
 \maketitle

\begin{abstract}
For  locally finite infinite graphs the notion of  Hamilton cycles can be extended to Hamilton circles, homeomorphic images of~$S^1$ in the Freudenthal compactification.
In this paper we extend some well-known theorems of the Hamiltonicity of finite Cayley graphs to infinite Cayley graphs.

 \end{abstract}

\section{Introduction}

In 1959 Elvira Rapaport Strasser~\cite{rap} proposed the problem of studying Hamilton cycles in Cayley graphs for the first time.
In fact the motivation of finding Hamilton cycles in Cayley graphs comes from ``bell ringing" and the ``chess problem of the knight", see \cite{campan1,campan}.
Later  Lov\'{a}sz \cite{BabaiLovasz}  extended this problem from Cayley graphs to vertex-transitive graphs.
He conjectured that every  finite connected vertex-transitive graph  contains  a  Hamilton  cycle except  only five  known  counterexamples, see \cite{BabaiLovasz}.
A famous family of vertex-transitive graphs are Cayley graphs and the conjecture of  Lov\'{a}sz can be formulated for Cayley graphs. 
This is well-known as the weak version of Lov\'{a}sz conjecture:
\begin{conj*}[{Weak  Lov\'{a}sz Conjecture}]
\label{lovasz}
Any Cayley graph of a finite group with three or more elements has a Hamilton cycle.
\end{conj*}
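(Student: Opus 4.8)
The plan is to proceed by induction on the order $|G|$, reducing to a proper quotient via a nontrivial normal subgroup and then lifting a Hamilton cycle back up through the coset structure. First I would dispose of the case where $G$ is abelian, since $\mathrm{Cay}(G,S)$ is then well known to be Hamiltonian for every generating set $S$ whenever $|G|\ge 3$; this settles the base of the induction together with all small groups. For the inductive step, let $N \trianglelefteq G$ be a minimal normal subgroup and pass to the quotient $\overline{G}=G/N$ with induced generating set $\overline{S}=\{sN : s\in S\}$. Since $|\overline{G}|<|G|$, induction supplies a Hamilton cycle in $\mathrm{Cay}(\overline{G},\overline{S})$, which I record as a cyclic word $w=\overline{s_1}\,\overline{s_2}\cdots\overline{s_m}$ in the generators, where $m=|\overline{G}|$.

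The heart of the method is the \emph{factor group lemma}: reading $w$ as a walk in $\mathrm{Cay}(G,S)$ based at the identity, after one traversal it closes not at $1$ but at the \emph{voltage} $\pi(w)=s_1 s_2\cdots s_m\in N$. If $N$ is cyclic of order $|N|=n$ and $\pi(w)$ \emph{generates} $N$, then concatenating $n$ copies of $w$ yields a closed walk meeting each coset of $N$ exactly once per copy and, because $\pi(w)$ generates $N$, visiting all $|G|=mn$ vertices precisely once --- a Hamilton cycle in $\mathrm{Cay}(G,S)$. So the step succeeds provided (i) the minimal normal subgroup $N$ may be taken cyclic, and (ii) the quotient's Hamilton cycle can be chosen with voltage generating $N$. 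Condition (ii) is the delicate point: one has freedom in which Hamilton cycle of $\overline{G}$ to select, and the voltages realized across all of them form a subset of $N$ that must be shown to contain a generator. When $N$ is not cyclic I would instead peel off a chief series $1=N_0\trianglelefteq N_1\trianglelefteq\cdots\trianglelefteq N_k=N$ with elementary-abelian factors and lift one layer at a time, controlling the relevant voltage at each stage.

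The step I expect to be the genuine obstacle is the case where $G$ is a nonabelian \emph{simple} group: then no proper nontrivial normal subgroup exists, the quotient reduction is unavailable, and the inductive scaffold collapses at the base of the composition series. For such $G$ one is thrown back on the arbitrariness of $S$, and there is no known uniform construction of a Hamilton cycle valid for every generating set; the classification of finite simple groups constrains which $G$ arise but says nothing about the combinatorics of an arbitrary Cayley graph built from them. Moreover, even when $N$ \emph{does} exist, controlling the voltage in condition (ii) for an arbitrary $S$ is itself unresolved, since a quotient Hamilton cycle need not lift with the correct voltage. It is precisely these two gaps --- simple groups, and voltage control for arbitrary generating sets --- that keep the Weak Lov\'asz Conjecture open, so the program above should be read as the standard reduction rather than a complete argument, and any serious attempt must confront one of these gaps head-on.
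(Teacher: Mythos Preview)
The statement you are attempting to prove is the Weak Lov\'asz \emph{Conjecture}: the paper does not prove it, and explicitly records that it remains open (``There is an immense number of papers about this conjecture, but it is still open''). There is therefore no proof in the paper to compare against.

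Your proposal is not a proof either, and to your credit you say so in the final paragraph: the inductive scheme via a minimal normal subgroup and the factor group lemma (which the paper also cites as Theorem~\ref{finite factor}) is the standard reduction, and you correctly isolate the two genuine obstructions --- nonabelian simple groups, where no nontrivial normal subgroup exists, and the voltage problem, where one must show that \emph{some} Hamilton cycle in the quotient lifts with a generating voltage. Both gaps are real and currently unresolved in general. So what you have written is an accurate summary of the known strategy and its limits, not a proof; since the paper likewise offers no proof, there is nothing further to compare.
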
	
There is an immense number of papers about this conjecture, but it is still open.
For a survey see \cite{wittesurvey}.
One can look at this conjecture for  infinite transitive graphs and infinite Cayley graphs.
But we first need an appropriate generalization of Hamilton cycles to the setting of  infinite graphs.
We follow the topological approach defined in \cite{TopSurveyI}, also see Section~\ref{topology}; for another instance of this approach see \cite{AgelosFleisch,hamlehpott,Karl1,Karl2}.
The first person to study infinite Hamilton cycles in Cayley graphs was Elvira Rapaport Strasser \cite{rap}, who was searching for two-way Hamilton infinite paths in one-ended groups, which are known as Hamilton circles today.
We state a useful theorem of \cite{rap} which enables us to find a Hamilton circle in Cayley graphs of one-ended group:
\begin{thm}{\rm\cite{rap}}\label{Rapaport}
Let~$G$ be a connected locally finite infinite graph with only one end.
Suppose that there is a set~$R$ of pairwise disjoint cycles containing every vertex precisely once, and a set~$S$ of (not necessary disjoint)~$4$-cycles containing every vertex.
Assume that when two of the~$4$-cycles in~$S$ intersect, the intersection is a single vertex; and when one of the cycles in~$R$ intersects a~$4$-cycle in~$S$, the intersection is a common edge. 
Also assume every edge of~$G$ lies on either one of the cycles in~$R$ or on one of the~$4$-cycles in~$S$(or both).
Then there is a two-way infinite Hamilton path in~$G$.
\end{thm}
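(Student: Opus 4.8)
The plan is to mimic the classical finite argument — where one starts from a disjoint union of cycles covering all vertices and uses a family of $4$-cycles to successively ``merge'' them into a single Hamilton cycle — but carried out in the Freudenthal compactification, so that the output is a double ray whose closure is the relevant topological object. Concretely, I would first set up a bijective labelling of the vertices by $\Z$. Fix a base vertex $v_0$ and enumerate the cycles of $R$ as $C_0, C_1, C_2, \dots$; since $G$ is one-ended, locally finite and connected, the $C_i$ can be enumerated so that each $C_i$ with $i\ge 1$ is joined to $C_0\cup\dots\cup C_{i-1}$ by at least one of the $4$-cycles in $S$ (this uses that every edge lies on an $R$-cycle or an $S$-$4$-cycle and that $G$ is connected, so the ``cycle adjacency graph'' is connected). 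Then I would traverse $C_0$, and whenever the current partial ray reaches an edge $e$ of some $C_i$ that also lies on a $4$-cycle $Q\in S$ whose opposite edge lies on an already-incorporated cycle $C_j$ ($j<i$, or $j=0$), I perform the standard ``cycle switch'': delete the two parallel edges $e$ and its opposite edge on $Q$, and insert the two side edges of $Q$, thereby splicing $C_i$ into the path. The hypotheses on how members of $R$ and $S$ intersect (an $R$-cycle meets an $S$-$4$-cycle in a common edge; two $S$-$4$-cycles meet in at most a vertex) are exactly what guarantee that these switches are well-defined and do not interfere destructively with one another.

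The key steps, in order, are: (1) build the cycle-adjacency enumeration $C_0, C_1, \dots$ described above and show it exhausts $V(G)$; (2) define inductively a sequence of spanning subgraphs $H_n$, where $H_0$ is the double ray obtained by cutting $C_0$ at a single edge (if $C_0$ is finite one instead starts the construction differently — but in a one-ended graph with such a cycle structure one expects either to begin from an infinite $C_0$ or to absorb finite cycles one at a time into a growing double ray), and $H_{n+1}$ is obtained from $H_n$ by one cycle switch that incorporates $C_{n+1}$; (3) check that each $H_n$ is a disjoint union of a double ray together with cycles $C_{n+1}, C_{n+2}, \dots$ not yet used, and that every vertex of $C_0\cup\dots\cup C_n$ lies on the double-ray part; (4) pass to the limit: the edge set $\bigcup_n E(H_n^{\mathrm{ray}})$ — more precisely, the eventual value of each edge's membership, which stabilises because each edge is touched by only finitely many switches (here local finiteness and the fact that each $4$-cycle meets only boundedly many relevant cycles are used) — defines a spanning subgraph $H_\infty$ in which every vertex has degree $2$ and which is connected; (5) conclude that $H_\infty$ is a double ray (a two-way infinite path), since a connected $2$-regular graph on a countably infinite vertex set with no finite component is a double ray. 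This yields the two-way infinite Hamilton path asserted.

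The main obstacle I expect is step (4): showing that the limiting process actually converges to a \emph{single} double ray rather than, say, two disjoint rays or a double ray plus leftover finite cycles ``at infinity''. One has to argue that no vertex is stranded — i.e. that the enumeration $C_0, C_1, \dots$ really does reach every cycle of $R$ after finitely many steps, and that each individual edge's status stabilises — and that connectivity is preserved in the limit, which is where one-endedness is essential: with more than one end the limit could split into two rays heading to different ends. A secondary technical point is the bookkeeping ensuring that when we perform the switch for $C_{n+1}$ along a $4$-cycle $Q$, the ``opposite'' edge of $Q$ currently lies on the double-ray part and not on some not-yet-incorporated $C_m$; the intersection hypotheses (especially that distinct $4$-cycles of $S$ share at most a vertex, so their switches use disjoint edge pairs) are what make this bookkeeping go through, and I would isolate it as a short lemma. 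I would also need to handle the degenerate possibility that some cycles in $R$ are themselves the $4$-cycles of $S$, or that $C_0$ must be chosen with care so that the very first cut produces a ray reaching the end; these I expect to be routine once the combinatorial framework is in place.
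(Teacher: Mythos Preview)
The paper does not contain a proof of this statement: Theorem~\ref{Rapaport} is quoted in the introduction as a result of Rapaport Strasser~\cite{rap} and is not proved anywhere in the present paper. There is therefore no ``paper's own proof'' to compare your proposal against.

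That said, a brief comment on your outline is still worthwhile. Your overall strategy --- merging the cycles of $R$ one by one via the $4$-cycles of $S$ using the standard edge-switch, and then passing to a limit --- is indeed the classical idea behind Rapaport's argument. However, your step~(2) as written does not get off the ground: in this paper's terminology \emph{cycles} are finite, so every member of $R$ is a finite cycle, and cutting $C_0$ at one edge yields a finite path, not a double ray. You acknowledge this parenthetically but do not resolve it. The correct set-up is to produce, at each finite stage $n$, a single finite cycle (or path) $H_n$ covering $C_0\cup\dots\cup C_n$, and then argue that the nested union $\bigcup_n H_n$ (after the edge sets stabilise locally) is a connected $2$-regular spanning subgraph --- hence a double ray. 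Your step~(4) correctly identifies this limit as the crux, and your observation that one-endedness is what prevents the limit from splitting into two rays is exactly right; but the argument that each edge's status eventually stabilises needs more care than you indicate, since a given edge can in principle lie on an $R$-cycle \emph{and} on a $4$-cycle of $S$, and hence could be toggled by a later switch. The intersection hypotheses are what control this, but you would need to make that control explicit rather than defer it to ``bookkeeping''.
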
	
Unfortunately, a direct extension of the weak Lov\'{a}sz conjecture for infinite groups fails.
However Geogakopoulos \cite{AgelosFleisch} conjectured that the weak Lov\'{a}sz conjecture holds true for infinite groups except for groups can be written as a free product with amalgamation of more than $k$ groups over finite subgroup of order~$k$.
In \cite{Cayley_HC_M_T} the authors found a counterexample for the above version of the weak Lov\'{a}sz conjecture for infinite groups.
But it seems that the weak Lov\'{a}sz conjecture for infinite groups holds for infinite groups with at most two-ends  except when the Cayley graph is the double ray \cite{Matthias}.

\begin{conj*}\label{M-R}
Any Cayley graph of a group with at most two ends is Hamiltonian except the double ray.
\end{conj*}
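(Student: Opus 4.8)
\medskip
\noindent\textbf{Proof strategy.}
The conjecture splits according to the number of ends of~$G$. When $G$ is finite it is precisely the Weak Lov\'asz Conjecture, which is open; so the realistic plan is to assume that case (or to invoke its known partial resolutions) and to work on infinite~$G$, which then has one or two ends. Recall that a Hamilton circle of a \emph{two}-ended graph meets both ends and is therefore a spanning double ray whose two tails run to the two distinct ends, while a Hamilton circle of a \emph{one}-ended graph is a spanning double ray both of whose tails run to the unique end; in either case it is enough to produce a spanning double ray with the prescribed behaviour at infinity.

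\medskip
\noindent\textbf{The two-ended case.}
A finitely generated group has two ends if and only if it has an infinite cyclic subgroup of finite index, and replacing that subgroup by its normal core we may take a \emph{normal} infinite cyclic subgroup $Z=\langle z\rangle\trianglelefteq G$ with $Q:=G/Z$ finite. For every $m\ge 1$ the subgroup $\langle z^{m}\rangle$ is characteristic in~$Z$, hence normal in~$G$, so $G_m:=G/\langle z^{m}\rangle$ is a finite group of order $m|Q|$, and $\Gamma:=\mathrm{Cay}(G,S)$ is the graph cover of the finite Cayley graph $\Gamma_m:=\mathrm{Cay}(G_m,\overline S)$ determined by the left-multiplication action of the infinite cyclic group~$\langle z^{m}\rangle$. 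Lifting a cycle of $\Gamma_m$ and running once around it moves the basepoint by $z^{mj}$ for a well-defined integer~$j$, the \emph{winding number} of the cycle. A Hamilton cycle of $\Gamma_m$ of winding number~$\pm1$ lifts to a double ray of~$\Gamma$ that meets every vertex exactly once, i.e.\ to a Hamilton double ray (winding number~$0$ lifts to a spanning set of pairwise disjoint finite cycles, and $|j|\ge 2$ to a double ray covering only a $\tfrac{1}{|j|}$ fraction of each $\langle z^{m}\rangle$-orbit). So it suffices to find, for \emph{some}~$m$, a Hamilton cycle of $\mathrm{Cay}(G_m,\overline S)$ with winding number~$\pm1$. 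Since $G_m$ is cyclic-by-$Q$ with $|Q|$ fixed and $m$ as large as we please, the plan is either to build such a cycle directly from the periodic ``layer'' structure of~$\Gamma$ --- traverse one period's worth of $z$-layers by a path that advances by exactly one period and whose endpoints mesh with the translate of that path one period further along --- or, where possible, to feed $\Gamma_m$ into a finite Hamiltonicity theorem (Cayley graphs of abelian groups, of groups with a small-index cyclic subgroup, or the finite analogue of Theorem~\ref{Rapaport}, using a decomposition of $\Gamma_m$ into spanning cycles together with linking $4$-cycles) and to use the slack afforded by large~$m$ to force the winding number to be~$\pm1$.

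\medskip
\noindent\textbf{Main obstacle in the two-ended case.}
The crux is controlling the winding number while still covering every vertex: a ``black-box'' Hamilton cycle of $\Gamma_m$ has no control on~$j$ and may well lift to a disjoint union of double rays instead of a single one. The difficulty is genuine when $S$ contains generators of large $z$-exponent, so that ``consecutive'' $z$-layers of~$\Gamma$ are in fact far apart and the period graph is complicated. One must also carry both shapes of~$G$ in parallel --- according to whether the conjugation action of~$G$ on~$Z$ is trivial (so $G$ has a finite normal subgroup with quotient~$\Z$) or not (quotient~$\Dinf$) --- and finally check that the construction degenerates \emph{only} when $\Gamma$ is the double ray, which happens exactly for $(\Z,\{\pm1\})$ and for~$\Dinf$ with its two canonical involutions; pinning down that this is the unique exception is itself a short case analysis.

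\medskip
\noindent\textbf{The one-ended case.}
Here the natural instrument is Theorem~\ref{Rapaport}: it is enough to exhibit in $\mathrm{Cay}(G,S)$ a set~$R$ of pairwise disjoint cycles covering every vertex once together with a set of $4$-cycles covering every vertex, so that two of the $4$-cycles meet in at most a vertex, a cycle of~$R$ meets a $4$-cycle in a common edge, and every edge is used. Constructing such a system for an \emph{arbitrary} one-ended group with an \emph{arbitrary} finite generating set is exactly where the statement becomes genuinely conjectural. Concretely I would first dispose of the cases in which the decomposition is visible --- Cayley graphs of $\Z^{2}$, and more generally of groups splitting as a direct or semidirect product with a~$\Z$ factor, or generating sets containing a pair of commuting generators spanning a $\Z^{2}$ flat, where the $4$-cycles come from the commuting pair and the spanning cycles from the remaining coordinate --- and then attempt to propagate this through the structure theory of one-ended groups. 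I do not expect a single uniform argument to handle all one-ended groups; combining accessibility and JSJ-type decompositions with the cycles-to-circles mechanism above is, in my view, the principal barrier to a complete proof of the conjecture.
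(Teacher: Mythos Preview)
The statement you were asked to prove is labelled a \emph{conjecture} in the paper and is presented as such, without proof; the paper's results (Theorems~\ref{index2 hamilton}, \ref{Z_p2}, \ref{split over 2group}, \ref{inf_rap_k2}, \ref{add_a}, \ref{inf_group_factor}) are partial evidence for it, not a proof. So there is no ``paper's own proof'' to compare against, and your write-up is --- appropriately --- a strategy sketch rather than a proof, as you yourself acknowledge (the finite case is the open Weak Lov\'asz Conjecture, and you call the one-ended case ``genuinely conjectural'').

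That said, there is a real error in your two-ended discussion that would derail the strategy even at the heuristic level. You write that ``a Hamilton circle of a \emph{two}-ended graph \ldots\ is a spanning double ray whose two tails run to the two distinct ends''. This is false: a circle is a homeomorphic image of~$S^1$ in~$|\Gamma|$, so in a two-ended graph a Hamilton circle passes through \emph{both} ends and therefore consists of \emph{two} disjoint double rays, each with a tail in each end, whose union spans~$V(\Gamma)$ --- exactly the content of Lemma~\ref{What is HC}. A single spanning double ray with tails in distinct ends is a Hamilton \emph{arc} (a Hamilton double ray), not a Hamilton circle. Consequently your lifting argument, which seeks a Hamilton cycle in $\Gamma_m$ of winding number~$\pm 1$ so that its lift is a single spanning double ray, is aimed at the wrong object: it would establish the existence of a Hamilton double ray (compare Theorem~\ref{index2hamiltonarc} and Lemmas~\ref{arcZ}, \ref{arcDinf}), not of a Hamilton circle. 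To get a circle by this covering approach you would instead want, roughly speaking, a pair of cycles in $\Gamma_m$ that together partition the vertex set and each have winding number~$\pm 1$, or to use the layer-by-layer mechanism of Lemma~\ref{Cylinder}, which is what the paper actually does in its partial results. Your one-ended description is correct (there the circle is a single double ray with both tails to the unique end), but the two-ended error is the one that matters, since that is the case where the paper makes concrete progress.
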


As we mentioned before there are several theorems regarding the existence of Hamilton cycles in finite  Cayley graphs.
In this paper wee study possible generalizations of the following theorems from finite groups to infinite groups:

\begin{itemize}
	\item Every connected Cayley graph on any finite~$p$-group is Hamiltonian.
	\item Let~$G$ be a finite group, generated by three involutions~$a,b,c$ in such a way that~${ab=ba}$.
	Then the Cayley graph~$\Gamma(G, \{a,b,c\})$ is Hamiltonian.
	\item Every finite group~$G$ of size at least~$3$ has a generating  set~$S$ of size at most~${ \log_2 |G|}$, such that~$\Gamma(G,S)$ contains a Hamilton cycle. 
	\item Let~$G=\langle S\rangle$ be finite and let~$N$ be a cyclic normal subgroup of~$G$. 
	If~$[\bar{x_1},\ldots,\bar{x_n}]$  is a Hamilton cycle of~$\Gamma(G/N,\overline{S})$ and the product~$x_1\cdots x_n$ generates~$N$, then~$\Gamma(G,S)$   contains a Hamilton cycle.
\end{itemize}   

 The problem of finding Hamilton circles in infinite Cayley graphs is also a question about the generating set used in creation of the Cayley graph in question, see \cite{Cayley_HC_M_T} for more detail.


This paper is structured in the following manner.
In Section~\ref{prelim} we recall most of the important notations used in this paper. 
Section~\ref{construction} contains results of finding conditions on groups and generating sets such that any Cayley graph of those groups fulfilling those conditions contains a Hamilton circle.
The most noteworthy results in Section~\ref{construction} is Theorem~\ref{index2 hamilton} which is about the connection of Hamilton circles of the Cayley of a group $G$ and Hamilton circles of the Cayley of a subgroup of $G$ with index 2.
Also Theorem~\ref{Z_p2} and Theorem~\ref{split over 2group} are significant in this section.
We study Hamilton circles in any Cayley graph of groups splitting over~$\mathbb Z_2$ in Theorem~\ref{Z_p2}.
 Theorem~\ref{split over 2group} is devoted to Hamilton circles in the Cayley graph of groups splitting groups over~$\Z_p$, where~$p$ is a prime number.
In Section~\ref{Section_rap} we prove that a family of 3-connected planar Cayley graphs admits a Hamilton circle, see Theorem~\ref{inf_rap_k2}.
Section~\ref{Section_add_gen} is used to study Hamiltonicity of Cayley graphs of infinite groups for which one can choose additional generators for those groups.
The main results of Section~\ref{Section_add_gen} are Theorem~\ref{add_a} and Theorem~\ref{inf_group_factor}. 
Theorem~\ref{add_a} says for a given generating set~$S$ of two-ended group~$G$, there is an element~$g\in G$ such that the Cayley graph $G$ with respect of~$S\cup\{g^{\pm}\}$ admits a Hamilton circle.
In addition Theorem~\ref{inf_group_factor} is a generalization of the Factor Group Lemma of finite groups to infinite groups. 


\section{Preliminaries}
\label{prelim}

We follow the notations and the terms of \cite{diestelBook10noEE} for graph-theoretical terms and for group-theoretical \cite{scott} unless stated otherwise.
Section~\ref{topology} is used to give a brief history on the concept of ends and define the topology on graphs we use to define ``infinite cycles'' in graphs.
In Section~\ref{on graphs} we recall the most important definitions related to graphs and the space~$|\Gamma|$ as well as defining an alternative notation for paths and cycles which is useful for Cayley graphs. 
The commonly used definitions related to groups are recapped in Section~\ref{on groups}.
Please note that throughout this paper~$\Gamma$ will be reserved for graphs and~$G$ will be reserved for groups.

\subsection{Topology}
\label{topology}

In 1931, Freudenthal~\cite{freu31} defined the concept of topological ends for topological spaces and topological groups for the first time. 
He defined the following topology on a locally compact Hausdorff space~$X$.
Consider an infinite sequence~$U_1\supseteq U_2 \supseteq \cdots$ of non-empty connected open subsets of~$X$ such that the boundary of each~$U_i$ is compact and~$\bigcap\overline{U_i}=\emptyset$.
Two such sequences~$U_1\supseteq U_2 \supseteq \cdots$ and~$V_1\supseteq V_2 \supseteq \cdots$ are \emph{equivalent} if for every~${i\in\N}$, there are~$j,k\in\N$ in such a way that~$U_i\supseteq V_j$ and~$V_i\supseteq U_k$.
The equivalence classes\footnote{We denote the equivalence class of~$U_i$ by~$[U_i]$.} of those sequences are \emph{topological ends} of~$X$.
The \emph{Freudenthal compactification} of the space~$X$ is the set of ends of~$X$ together with~$X$.
He defined neighborhoods of ends in the following manner:
A neighborhood of an end~${[U_i]}$ is an open set~$V$ such that~$V \supsetneq U_n$ for some~$n$.
We denote the  Freudenthal compactification of the topological space~$X$ by~$|X|$.

Halin \cite{halin64} introduced a different notion of ends, the ends, for infinite graphs in~1964.
A \emph{ray} is a one-way infinite path, in a graph. 
Its subrays are its \emph{tails}.
Two rays~$R_1$ and~$R_2$ of a given graph~$\Gamma$ are \emph{equivalent} if for every finite set~$S$ of vertices of~$\Gamma$ there is a component of~$\Gamma\setminus S$ which contains both a tail of~$R_1$ and of~$R_2$. 
The classes of the equivalent rays are called \emph{end}.
Diestel and K\"{u}hn~\cite{Ends} have shown that if we consider a locally finite graph~$\Gamma$ as a one-dimensional complex and endow it with the one complex topology then the topological ends of~$\Gamma$ coincide with the vertex-ends of~$\Gamma$.
For a graph~$\Gamma$ we denote the Freudenthal compactification of~$\Gamma$ by~$|\Gamma|$. 

A homeomorphic image of~$[0,1]$ in the topological space~$|\Gamma |$ is called \emph{arc}. 
A \emph{Hamilton arc} in~$\Gamma$ is an arc including all vertices of~$\Gamma$. 
By a \emph{Hamilton circle} in~$\Gamma$, we mean  a homeomorphic image of the unit circle in~$|\Gamma|$ containing all vertices of~$\Gamma$.
Note that Hamilton arcs and circles in a locally finite graph always contain all ends of the graph precisely once. 
A Hamilton arc whose image in a graph is connected, is a \emph{Hamilton double ray}.

\subsection{Graphs}
\label{on graphs}
All graphs in this paper are locally finite. 
As cycles are always finite, we need a generalization of Hamilton cycles for infinite graphs.  
We follow the topological  approach defined in Section~\ref{topology} by using the circles in the Freudenthal compactification~$| \Gamma |$ of a locally finite~$\Gamma$ graph  as ``infinite cycles".
We use the term \emph{circles} for ``infinite cycles" and by \emph{cycles} we mean finite cycles.
The compactification points~$\Omega \defi |\Gamma| \setminus \Gamma$ are the \emph{ends} of~$\Gamma$.
We say that an end~$\omega$ \emph{lives in a component~$C$} of~$\Gamma \setminus X$, where~$X$ is a finite subset of~$V(\Gamma)$ or a finite subset of~$E(\Gamma)$, when a ray of~$\omega$ has a tail completely contained in~$C$, and we denote~$C$ by~$C(X,\omega)$.\footnote{Note that the choice of the rays does not matter as~$X$ is finite and~$\Gamma$ is locally finite.} 
A sequence of finite edge sets~$(F_i) _{i \in \bbN}$ is a \emph{defining sequence} of an end~$\omega$ if~$C_{i+1} \subsetneq C_{i}$, with~${C_i \defi C(F_i,\omega)}$ such that~$\bigcap C_i = \emptyset$. 
We define the \emph{degree of an end~$\omega$} as the supremum over the number of edge-disjoint rays belonging to the class which corresponds to~$\omega$. 
We denote the set of ends of~$\Gamma$ by~$\Omega(\Gamma)$. 
A graph is called \emph{Hamiltonian} if it contains either a Hamilton cycle or a Hamilton circle.

Thomassen~\cite{ThomassenG2} defined a Hamilton cover of a finite graph~$\Gamma$ to be a collection of mutually disjoint paths~${P_1,\ldots,P_m}$ such that each vertex of~$\Gamma$ is contained in exactly one of the paths.
For easier distinction we call this a~\emph{finite Hamilton cover}.
An \emph{infinite Hamilton cover} of an infinite graph~$\Gamma$ is a collection of mutually disjoint \emph{double rays}, two way infinite paths, such that each vertex of~$\Gamma$ is contained in exactly one of them.
The order of an infinite Hamilton cover is the number of disjoint double rays in it.

In the context of Cayley graphs we switch  the notation of~\cite{commutatorsubgroup, wittesurvey} with the notation given in~\cite{diestelBook10noEE}.
The advantage of this is that uses labeled edges rather than just vertices.
We recall the definition in the following: 
Let~${G=\langle S\rangle}$ be a group and  let~$g$ and~$s_i\in S,~i \in \bbZ$, be elements of~$G$. 
In this notation~$g[s_1]^k$ denotes the concatenation of~$k$ copies of~$s_1$ from the right starting from~$g$ which translates to the path~$g,(gs_1),\ldots,(gs_1^k)$ in the usual notation. 
Analogously~${[s_1]^k g}$ denotes the concatenation of~$k$ copies of~$s_1$ starting again from~$g$ from the left.
In addition~$g[s_1,s_2,\ldots]$ translates to be the ray~$g,(gs_1),(gs_1s_2),\ldots$ and 
$$[ \ldots , s_{-2},s_{-1}]g[s_1,s_2, \ldots ]$$
translates to be the double ray 
$$\ldots, (gs_{-1}s_{-2}),(gs_{-1}),g,(gs_1),(gs_1s_2),\ldots$$ 
When discussing rays we extend the notation of~$g[s_1,\ldots, s_n]^k$ to~$k$ being countably infinite and write~$g[s_1,\ldots, s_n]^\bbN$ and the analogue for double rays.  
The statement that~$g[c_1,\ldots,c_k]$ is a cycle is short for saying that~$g[s_1,\ldots, s_{k-1}]$ is a path and that the edge~$s_k$ joins the vertices~${g s_1\cdots s_{k-1}}$ and~${g}$.

\subsection{Groups}
\label{on groups}
We note that we  suppose always that the generating set of~$G$ is symmetric, which means that if~$s\in S$, then~$s^{-1}\in S$.
We denote the Cayley graph of~$G$ with respect to~$S$ by~$\Gamma(G,S)$.  
For any two sets~$A$ and~$B$ we write~$A \sqcup B$ for the disjoint union of the sets~$A$ and~$B$. 
A finite group~$G$ is a \emph{$p$-group} if the order of each element of~$G$ is a power of~$p$, where~$p$ is a prime number.
Let~$A$ and~$B$ be two subsets of~$G$.
Then~$AB$ denotes the set~${\{ab\mid a\in A, b\in B \}}$ and hence~$A^2$ is defined as~$AA$. 
Let~$H \leq G$.
Then for~${g\in G}$ and~$h\in H$ we denote~$g^{-1}H g$ and~$g^{-1}h g$ by~$H^g$ and~$h^g$, respectively. 
An important subgroup of~$H$ is~$\mathsf{Core}(H):=\cap_{g\in G}H^g$ which is always normal in~$G$ and moreover if~${[G:H]=n}$, then the index~$\mathsf{Core}(H)$ in~$G$ is at most~$n!$, see~\cite[Theorem 3.3.5]{scott}.
We denote the order of the element~$g$ by~$o(g)$.
We denote the \emph{centralizer} of the element~$g$ by~${C_G(g)\defi \{h\in G\mid hg=gh\}}$ and the \emph{commutator subgroup} of~$G$  by~$G'$ which is the subgroup generated by all elements of the forms~$ghg^{-1}h^{-1}$.
Assume that~$H$ and~$K$ are two groups.
A \emph{short exact sequence} of the groups~$H,G$ and~$K$ is given by two following maps:
$$1\rightarrow H\xrightarrow{f} G\xrightarrow{g} K\rightarrow 1$$
Such that $f$ is injective and $g$ is surjective and moreover the kernel of~$g$ is equal to the image of $f$.
The group~$G$ is called an \emph{extension} of~$H$ by~$K$ if there exists the above short exact sequence.
We note that every semi-direct product~$H\rtimes K$ has a short exact sequence like above.
For a group~$G= \langle S \rangle$ we define~$e(G)\defi |\Omega(\Gamma(G,S)|$.
We note that this definition is independent of the choice of~$S$ as 
$$|\Omega(\Gamma(G,S))| = |\Omega(\Gamma(G,S^\prime))|$$ 
as long as~$S$ and~$S^\prime$ are finite, see~\cite[Theorem 11.23]{mei}.
Let~$H$ be a normal subgroup of~$G$.
Then we denote the set~$\{sH\mid s\in S\}$ by~$\overline{S}$.
We notice that~$\overline{S}$ generates~$\overline{G} \defi {G/H}$.
Let~$G_i$ be two groups with subgroups~$H_i$ where~${H_1\cong H_2\cong H}$, with~$i=1,2$.
A generating set~$S$ of~$G_1\ast_H G_2$ is called \emph{canonical} if~$S$ is a union of~$S_i$ for~$i=1,\ldots,3$ such that~$\langle S_i\rangle=G_i$ for~$i=1,2$ and~$H=\langle S_3\rangle$.
We note that when~$H=1$, then we assume that~$S_3=\emptyset$.
A subgroup~$H$ of~$G$ is called \emph{characteristic} if any automorphism~$\phi$ of~$G$ maps~$H$ to itself and we denote it by~$H\mathsf{char} G$.
A \emph{dihedral group} is defined with the presentation~$\langle a,b\mid b^2,a^n,(ba)^2\rangle$, where~$n\in \N\cup \infty$ and denote it by~$D_{2n}$.
By~$F_r$ we mean the free group of the rank~$r$, denoted by~$\mathsf{rank}(F_r)=r$.
Let~$A$ and~$B$ be two groups with subgroups~$C_1$ and~$C_2$ respectively such that there is an isomorphism~${\phi\colon C_1\to C_2}$.
The \emph{free product with amalgamation} is defined as

$$A_1\free_{C_1=C_2} B \defi  \langle S_1\cup S_2\mid R_1\cup R_2\cup C_1\phi^{-1}(C_1)\rangle.$$
For the sake of simplicity we denote it by $A\ast _C B$, where $C_1\cong C_2\cong C$. 
Next we define HNN-extension just for two-ended groups.
Let~$C=\langle S\mid R\rangle$ be a finite group and $\phi\in \mathsf{Aut}(C)$.
We now insert a new symbol~$t$ not in~$C$ and we define the \emph{HNN-extension} of~${C\ast_{C}}$ as follows:
$${C\ast_{C}} \defi \langle S, t\mid R\cup \{ t^{-1}c t \phi(c)^{-1} \mid \mbox{ for all } c \in C \}\rangle.$$
Finally we cite a famous structure theorem of groups in a formulation for two-ended groups which will turn out to be quite useful in proving our results. 
Let~$H$ be a subgroup of~$G$.
Then a subset~$T$ of~$G$  is called a \emph{left transversal} of~$H$ in~$G$  if~$T$  intersects every left coset of~$H$ at exactly one element.
Analogously we can define a \emph{right transversal} of~$H$ in~$G$.
\begin{thm}{\rm\cite[Theorem 4.1]{two-ended}}\label{stallings} 
	Let~$G$ be a finitely generated group. 
	Then the following statements are equivalent.
	\begin{enumerate}[\rm (i)]
		\item~$G$ is two-ended.
		\item~$G$ has an infinite cyclic subgroup of finite index.
		\item~$G=A {{\free\,\,}_{C}}B$ and~$C$ is finite and~$[A:C]=[B:C]=2$ or~$G=C\free_C$ with~$C$ is finite.
		\item $G/C$ is isomorphic to either $\Z$ or $D_{\infty}$, where $C$ is  finite normal subgroup.
	\end{enumerate}
\end{thm}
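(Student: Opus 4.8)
The plan is to prove the cycle of implications (iv)~$\Rightarrow$~(iii)~$\Rightarrow$~(ii)~$\Rightarrow$~(i)~$\Rightarrow$~(iv); of these only the last is substantial, the rest being bookkeeping with Bass--Serre theory and the quasi-isometry invariance of the number of ends. The step (ii)~$\Rightarrow$~(i) is immediate: the number of ends of a finitely generated group is a quasi-isometry invariant, a subgroup of finite index is quasi-isometric to the ambient group, and $\Z$ has exactly two ends, so $e(G)=e(\Z)=2$.

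For (iv)~$\Rightarrow$~(iii), let $C\trianglelefteq G$ be finite. If $G/C\cong\Z$, the extension $1\to C\to G\to\Z\to1$ splits because $\Z$ is free, so $G=C\rtimes\langle t\rangle$ with $t$ of infinite order acting on $C$ by some $\varphi\in\mathsf{Aut}(C)$; reading off a presentation identifies $G$ with the HNN-extension $C\free_C$ with stable letter $t$. If $G/C\cong D_\infty$, write $D_\infty=\langle x,y\mid x^2,y^2\rangle$ and let $\pi\colon G\to D_\infty$ be the projection; put $A=\pi^{-1}(\langle x\rangle)$ and $B=\pi^{-1}(\langle y\rangle)$, so $C\le A,B$ with $[A:C]=[B:C]=2$. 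Composing $\pi$ with the action of $D_\infty$ on its Bass--Serre tree (a line, with one orbit of edges, trivial edge stabilizers, and two orbits of vertices with stabilizers the two factors) makes $G$ act on that line with vertex stabilizers conjugate to $A,B$ and edge stabilizers conjugate to $C$, and the fundamental theorem of Bass--Serre theory gives $G=A\free_C B$.

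For (iii)~$\Rightarrow$~(ii), in either splitting the Bass--Serre tree $T$ is a line, since every vertex has degree $[A:C]=[B:C]=2$ in the amalgam case and degree $[C:C]+[C:C]=2$ in the HNN case. Hence the action gives $\rho\colon G\to\mathsf{Aut}(T)\cong D_\infty$, whose kernel is contained in a finite vertex stabilizer, hence finite, while its image is infinite since $G$ is. Every infinite subgroup of $D_\infty$ has a copy of $\Z$ of finite index; pulling it back yields a finite-index subgroup $G_1\le G$ fitting in $1\to K\to G_1\to\Z\to1$ with $K$ finite, and any $g\in G_1$ mapping to a generator of $\Z$ satisfies $\langle g\rangle\cong\Z$ (it meets the finite group $K$ trivially) and $G_1=\langle g\rangle K$, so $\langle g\rangle$ has index $|K|$ in $G_1$ and finite index in $G$.

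The remaining implication (i)~$\Rightarrow$~(iv) is the crux. Since $G$ has more than one end, Stallings' theorem on ends of groups produces a non-trivial splitting of $G$ over a finite subgroup; equivalently, $G$ acts minimally, cocompactly and without inversions on a tree $T$ with finite edge stabilizers and no global fixed point. The key point — which I expect to be the main obstacle — is that $T$ must be a line: for an action of this kind the ends of $G$ correspond to the ends of $T$ (finiteness of the edge groups makes each end of $T$ ``visible'' in $G$), and a cocompact action on an infinite tree leaves $T$ either a line, with two ends, or forces a vertex of degree $\ge3$ and hence, by cocompactness, infinitely many ends; as $e(G)=2$ we are in the first case. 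Then $G$ acts on a line, so $\rho\colon G\to\mathsf{Aut}(T)\cong D_\infty$ has kernel $N\trianglelefteq G$ contained in a finite vertex stabilizer, hence finite, and infinite image, so $G/N$ is an infinite subgroup of $D_\infty$, thus isomorphic to $\Z$ or $D_\infty$, which is (iv). A Stallings-free route to (i)~$\Rightarrow$~(ii) would instead be hands-on: take a finite vertex set $K$ whose deletion leaves exactly two infinite components, find $g\in G$ pushing $K$ strictly into one of them, and check that $g$ has infinite order while $\bigcup_{n\in\Z}g^nK$ is coarsely all of $\Gamma(G,S)$, so that $\langle g\rangle\cong\Z$ acts cocompactly and has finite index — but verifying that the region between consecutive translates $g^nK$ and $g^{n+1}K$ is finite again rests on two-endedness.
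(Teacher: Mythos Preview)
The paper does not give its own proof of this theorem: it is quoted verbatim as a known structure result (cited as \cite[Theorem~4.1]{two-ended}) and used as a black box throughout. So there is no ``paper's proof'' to compare your proposal against.

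That said, your proposal is a correct and standard outline. The implications (iv)~$\Rightarrow$~(iii)~$\Rightarrow$~(ii)~$\Rightarrow$~(i) are handled cleanly; the use of Bass--Serre theory for (iv)~$\Rightarrow$~(iii) and the explicit construction of the infinite cyclic finite-index subgroup in (iii)~$\Rightarrow$~(ii) are fine. For (i)~$\Rightarrow$~(iv) your invocation of Stallings' theorem and the subsequent reduction to an action on a line is the right strategy. One point deserves a slightly sharper formulation: the assertion that ``the ends of $G$ correspond to the ends of $T$'' is not literally true in general (think of infinite vertex groups), and what you actually need is the weaker fact that if the Bass--Serre tree of a splitting over a finite subgroup has a vertex of valence at least $3$, then $G$ has infinitely many ends. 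This follows because each edge stabilizer, being finite, furnishes a finite separating set in the Cayley graph, and distinct half-trees behind distinct edges yield distinct ends of $G$. Once the tree is a line, vertex groups are automatically finite (since $[A:C]=[B:C]=2$ with $C$ finite forces $|A|=|B|=2|C|$), so the kernel of $G\to\mathsf{Aut}(T)\cong D_\infty$ really is finite, and the rest of your argument goes through.
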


 A group~$G$ is called a \emph{planar group} if there exist a generating set~$S$ of~$G$ such that~$\Gamma(G,S)$ is a planar graph.
 We will only consider groups with locally finite Cayley graphs in this paper so we assume that all generating sets~$S$ will be finite.


\section{Constructing groups with Hamilton circles}
\label{construction}

One of the strongest results about the Lov\'{a}sz conjecture is the following  theorem which has been proved by Witte.

\begin{thm}{\rm\cite[Theorem 6.1]{p-group}}
\label{finite p-group}
Every connected Cayley graph on any finite~$p$-group is Hamiltonian.
\end{thm}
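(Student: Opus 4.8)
This statement is a theorem of Witte \cite{p-group}, and the natural route to it is induction on $|G|$ through the Factor Group Lemma (the last of the four finite results quoted in the introduction). For the base of the induction, note that $p$-groups of order at most $p^2$ are abelian, and every connected Cayley graph of a finite abelian group on at least three vertices has a Hamilton cycle (a classical fact); this settles all $G$ with $|G|\le p^2$, the case $|G|=2$ being excluded exactly as in the weak Lov\'{a}sz conjecture.

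For the inductive step let $G=\langle S\rangle$ be a $p$-group with $|G|\ge p^3$. Since a nontrivial finite $p$-group has nontrivial center, $Z(G)$ contains an element $z$ of order $p$; put $N\defi\langle z\rangle\cong\Z_p$, a cyclic normal subgroup of $G$. Then $\overline G\defi G/N$ is a $p$-group with $|\overline G|=|G|/p\ge p^2\ge 3$, so by the induction hypothesis $\Gamma(\overline G,\overline S)$ has a Hamilton cycle $[\,\overline{x_1},\dots,\overline{x_n}\,]$, where $n=|\overline G|$ and each $x_i\in S$. The product $x_1\cdots x_n$ then lies in $N$, and if it \emph{generates} $N$ the Factor Group Lemma yields a Hamilton cycle of $\Gamma(G,S)$, completing the induction.

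The only thing that can fail is that $x_1\cdots x_n$ equals the identity, in which case the $p$ lifts of the quotient cycle form $p$ disjoint cycles in $\Gamma(G,S)$ rather than a single Hamilton cycle; overcoming this is the technical heart of Witte's argument. What is really needed is a strengthened inductive hypothesis producing a Hamilton cycle of $\overline G$ whose $N$-voltage $x_1\cdots x_n$ is a \emph{generator} of $N$: one tracks how suitable local modifications of a Hamilton cycle change this voltage and argues, using the connectivity of $\Gamma(G,S)$ and the fact that $N$ is central of prime order, that one can always steer the voltage to a generator. I expect this voltage-bookkeeping to be the main obstacle; the remainder is a routine application of the Factor Group Lemma.
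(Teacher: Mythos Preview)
The paper does not prove this theorem at all: it is quoted verbatim as Witte's result \cite[Theorem~6.1]{p-group} and used as motivation for the infinite generalizations that follow, with no argument given. So there is no ``paper's own proof'' to compare your proposal against.

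That said, your sketch is a fair outline of Witte's actual approach: induct on $|G|$, pass to $G/N$ for a central $N\cong\Z_p$, lift a Hamilton cycle via the Factor Group Lemma, and the real work is arranging that the product $x_1\cdots x_n$ lands on a generator of $N$ rather than the identity. You correctly flag this voltage-adjustment step as the crux and do not actually carry it out; as written, then, your proposal is an honest summary of the strategy rather than a proof. If you want to complete it you would need Witte's modification argument (or an equivalent), which is nontrivial and is precisely why the present paper simply cites the result.
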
	

In this section we are trying to present a generalization for Theorem~\ref{finite p-group} for infinite groups.
First of all we need to show that two-ended groups always contain a subgroup of index two.

\begin{lemma}\label{index 2}
Let~$G$ be a finitely generated two-ended group.
Then~$G$ contains a subgroup of index two.
\end{lemma}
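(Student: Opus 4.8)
The plan is to use the structure theorem for two-ended groups, namely Theorem~\ref{stallings}, which gives us a concrete handle on $G$. By part~(iv) of that theorem, $G$ has a finite normal subgroup $C$ such that $G/C \cong \Z$ or $G/C \cong D_\infty$. So it suffices to produce an index-two subgroup of $G/C$ and pull it back: if $\overline{H} \leq G/C$ has index two, then its preimage $H$ under the quotient map $G \to G/C$ has index two in $G$ (the index is multiplicative along the tower $H \supseteq C$, and $[G/C : \overline{H}] = [G:H]$ since $C \leq H$).

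Thus the problem reduces to the two base cases. First, if $G/C \cong \Z$, then $\Z$ has the subgroup $2\Z$ of index two, and we are done. Second, if $G/C \cong D_\infty$, I would use the standard presentation $D_\infty = \langle a, b \mid b^2, (ba)^2 \rangle$; here the cyclic subgroup $\langle a \rangle \cong \Z$ has index two (the two cosets being $\langle a \rangle$ and $b\langle a \rangle$). Alternatively, and perhaps more cleanly, $D_\infty$ surjects onto $\Z_2$ (by abelianization, or by the sign-of-reflection homomorphism), and the kernel is an index-two subgroup. Either way, $G/C$ has a subgroup of index two in both cases, which completes the argument.

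I do not expect any serious obstacle here; the lemma is essentially a bookkeeping consequence of Theorem~\ref{stallings}(iv). The only point requiring a modicum of care is the reduction step — verifying that pulling back an index-two subgroup of the quotient $G/C$ through the canonical projection yields an index-two subgroup of $G$ — but this is routine: the correspondence theorem gives a bijection between subgroups of $G/C$ and subgroups of $G$ containing $C$, and this bijection preserves index. One could also phrase the whole proof without invoking $C$ at all, by composing $G \to G/C \to \Z_2$ into a single surjection $G \to \Z_2$ and taking its kernel; I would present it that way for brevity, noting that a two-ended group is in particular infinite so the kernel is a proper (index-two) subgroup.
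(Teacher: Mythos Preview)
Your proof is correct. Both your argument and the paper's rest on the same structural fact about two-ended groups, but invoke it through different sources. The paper appeals to \cite[Lemma 11.31 and Theorem 11.33]{mei} to obtain a subgroup $H$ of index at most $2$ carrying a homomorphism onto $\Z$ with finite kernel $K$; if $[G:H]=2$ one is done immediately, and if $G=H$ one pulls back $2\Z$ along $G \to G/K \cong \Z$. You instead go through Theorem~\ref{stallings}(iv), already stated in the paper's preliminaries, which packages the same dichotomy as $G/C \cong \Z$ or $G/C \cong D_\infty$ for a finite normal $C$, and then pull back an index-two subgroup of the quotient in either case via the correspondence theorem. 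Your route has the minor advantage of staying within the paper's own toolkit and of handling the $D_\infty$ case explicitly; in the paper's version that case is absorbed into the clause ``index at most $2$''. Substantively the two arguments are equivalent reformulations of one another.
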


\begin{proof}
It follows from  \cite[Lemma 11.31]{mei} and  \cite[Theorem 11.33]{mei} that there exists a subgroup~$H$ of index at most~$2$ together with a homomorphism~${\phi\colon H\to \Z}$ with finite kernel.
Now if~$G$ is equal to~$H$, then we deduce that~$G/K$ is isomorphic to~$\Z$ where~$K$ is the kernel of~$\phi$.
Let~$L/K$ be the subgroup of~$G/K$ corresponding to~$2\bbZ$.
This implies that the index of~$L$ in~$G$ is 2, as desired.
\end{proof}

 Now by Lemma \ref{index 2} we know that~$G$ always possesses a subgroup~$H$ of index~$2$.
In Theorem~\ref{index2 hamilton} we show that if any Cayley graph of~$H$ is Hamiltonian, then~$\Gamma(G,S)$ contains a Hamilton circle if~$S \cap H = \emptyset$. 

By the work of Diestel we get the following lemma as a tool to finding Hamilton circles in two-ended graphs. 

\begin{lemma}{\rm\cite[Theorem 2.5]{TopSurveyI}}\label{What is HC}
Let~$\Gamma=(V,E)$ be a two-ended graph. 
And let~$R_1$ and~$R_2$ be two doubles rays such that the following holds:
\begin{enumerate}[{\rm(i)}]
		\item~$R_1 \cap R_2 = \emptyset$
		\item~$V = R_1 \cup R_2$ 
		\item For each~$\omega \in \Omega(\Gamma)$ both~$R_i$ have a tail that belongs to~$\omega$. 
\end{enumerate}
Then~$R_1 \sqcup R_2$ is a Hamilton circle of~$\Gamma$.\qed
\end{lemma}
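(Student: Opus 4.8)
The plan is to verify directly that $R_1 \sqcup R_2$, viewed as a subspace of the Freudenthal compactification $|\Gamma|$, is homeomorphic to the circle $S^1$, and then observe that it contains every vertex by hypothesis~(ii). First I would fix notation: since $\Gamma$ is two-ended, write $\Omega(\Gamma) = \{\omega_1, \omega_2\}$. Each double ray $R_i$, being a two-way infinite path in a locally finite graph, has exactly two tails, and by~(iii) one tail of $R_i$ converges to $\omega_1$ and the other to $\omega_2$ in $|\Gamma|$; hence the closure $\overline{R_i}$ in $|\Gamma|$ is $R_i \cup \{\omega_1, \omega_2\}$, which is an arc (a homeomorphic image of $[0,1]$) with endpoints $\omega_1$ and $\omega_2$. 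I would justify this by recalling that a double ray together with the (at most two) ends its tails converge to is a standard example of an arc in $|\Gamma|$, using local finiteness so that the basic open neighborhoods of an end separate the two tails appropriately.

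Next I would glue the two arcs. By~(i) we have $R_1 \cap R_2 = \emptyset$, so $\overline{R_1} \cap \overline{R_2} = \{\omega_1, \omega_2\}$, i.e. the two arcs meet exactly in their shared pair of endpoints. Two arcs sharing precisely their two endpoints and nothing else form a homeomorphic image of $S^1$: one can exhibit the homeomorphism explicitly by parametrizing $\overline{R_1}$ by the upper semicircle and $\overline{R_2}$ by the lower semicircle, matching $\omega_1 \mapsto (1,0)$ and $\omega_2 \mapsto (-1,0)$, and then invoking the gluing lemma for continuous maps agreeing on the closed overlap. Since $|\Gamma|$ is Hausdorff and the relevant spaces are compact, the resulting continuous bijection is automatically a homeomorphism onto its image. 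Thus $\overline{R_1} \cup \overline{R_2} = R_1 \sqcup R_2$ (the notation $\sqcup$ here already incorporating the two ends) is a circle in $|\Gamma|$.

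Finally, by~(ii) this circle contains every vertex of $\Gamma$, and being a circle in $|\Gamma|$ it contains both ends, so it is by definition a Hamilton circle. The main obstacle, such as it is, is the topological bookkeeping in the first step: one must be careful that the closure of a double ray in $|\Gamma|$ really is just the ray plus its two end-points and that it is an arc rather than something more pathological — this is where local finiteness and the precise definition of the neighborhoods of ends in the Freudenthal compactification are essential. Everything after that is a routine application of the gluing lemma and compact-Hausdorff rigidity. Since this is cited as \cite[Theorem 2.5]{TopSurveyI}, I would in fact simply defer the delicate point to that reference and present the gluing argument as the substance of the verification.
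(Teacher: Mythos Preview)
Your argument is correct: each $\overline{R_i}$ is an arc in $|\Gamma|$ with endpoints $\omega_1,\omega_2$, the two arcs meet only in these endpoints by~(i), and their union is therefore a circle containing every vertex by~(ii). Note, however, that the paper does not give its own proof of this lemma at all --- it is stated with a citation to \cite[Theorem~2.5]{TopSurveyI} and a \qed, and is used purely as a black-box tool --- so there is nothing to compare your approach against beyond observing that your sketch is a faithful unpacking of the cited result.
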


 For two-ended graphs we say~$R_1\sqcup R_2$ is a Hamilton circle if~$R_1$ and~$R_2$ fulfil the conditions of Lemma~\ref{What is HC}.  


The problem of finding Hamilton circles in graphs with more than two ends is a harder problem than finding Hamilton circles in graphs with one or two ends. 
For graphs with one or two ends the goal is to find one or two double rays containing all the vertices which behave nicely with the ends. 
For graphs with unaccountably many ends, it is not so straight forward to know what this desired structure could be. 
But the following powerful lemma by Bruhn and Stein helps us by telling us what such a structure looks like. 

\begin{lemma}\textnormal{\cite[Proposition 3]{degree}}
\label{What is HC Inf}
Let~$C$ be a subgraph of a locally finite graph~$\Gamma$. 
Then the closure of~$C$ is a circle if and only if the closure of~$C$ is topologically connected and every vertex or end of~$\Gamma$ in this closure has degree two in~$C$. \qed
\end{lemma}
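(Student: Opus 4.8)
The forward direction is the easy one. If $\overline{C}$ is a circle then it is a homeomorphic image of $S^1$, hence compact and topologically connected, so the first half of the right-hand side is immediate. For the degree condition, recall that in $|\Gamma|$ a basic open neighbourhood of a vertex $v$ is $v$ together with half-open initial segments of the finitely many edges at $v$, while a basic open neighbourhood of an end $\omega\in\overline{C}$ has the form $C(F_i,\omega)$ for some term of a defining sequence $(F_i)$ of $\omega$. Fix a homeomorphism $\sigma\colon S^1\to\overline{C}$; since $S^1$ minus one point is an arc, so is $\overline{C}\setminus\{v\}$ and so is $\overline{C}\setminus\{\omega\}$. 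Intersecting such an arc with a small basic neighbourhood of $v$ shows that the arc ``runs into $v$'' along exactly two of the edges of $\Gamma$ at $v$, and since $\overline{C}\setminus\{v\}$ contains all of $C$ except $v$, this forces exactly two edges at $v$ to lie in $C$. Likewise, an end of $C$-degree $d$ carries $d$ edge-disjoint rays of $C$ in its class, giving $d$ distinct ``directions'' along which $\overline{C}\setminus\{\omega\}$ approaches $\omega$, so $d=2$. Making ``direction'' precise is the only mild subtlety, and it is exactly the separation property of basic neighbourhoods of vertices and ends in $|\Gamma|$.

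For the converse, assume $\overline{C}$ is connected and every vertex or end of $\Gamma$ lying in $\overline{C}$ has degree $2$ in $C$. Since every vertex of $C$ has degree $2$, the graph $C$ is a disjoint union of finite cycles and double rays; and since in a locally finite graph no end lies in a basic neighbourhood of a vertex, every vertex of $\Gamma$ in $\overline{C}$ already lies in $C$. If $C$ has a finite cycle $Z$ as a component, then both edges of $C$ at each vertex of $Z$ lie in $Z$, so no other edge of $C$ meets $Z$; a short check with basic neighbourhoods then shows $Z$ is clopen in $\overline{C}$, whence connectedness gives $\overline{C}=Z$, a finite cycle, and we are done. So assume $C=\bigsqcup_{j\in J}D_j$ is a disjoint union of double rays. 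Each tail of each $D_j$ converges in $|\Gamma|$ to an end of $\Gamma$ lying in $\overline{C}$; conversely, if an end $\omega\in\overline{C}$ were the limit of three or more of these tails we could extract three edge-disjoint rays of $C$ in the class of $\omega$ (tails of distinct $D_j$ are edge-disjoint, and the two tails of a single $D_j$ become edge-disjoint far out), contradicting $\deg_C(\omega)=2$, while being the limit of at most one tail would give $\deg_C(\omega)\le 1$. Hence each end of $\overline{C}$ is the limit of exactly two of the tails.

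Now form the auxiliary multigraph $H$ whose vertices are the ends of $\Gamma$ in $\overline{C}$, with one edge for each $D_j$ joining the one or two ends to which its tails converge (a loop if both converge to the same end). By the previous paragraph every vertex of $H$ has degree $2$, and $\overline{C}$ connected forces $H$ connected, so $H$ is a single loop, a single finite cycle, or a single infinite double ray. In the first two cases $\overline{C}$ is a cyclic concatenation of finitely many closed arcs — each arc being the closure of one $D_j$ together with its (at most two) limiting ends, which is homeomorphic to $[0,1]$ — glued at their endpoints, and one verifies that the subspace topology that $|\Gamma|$ induces at each gluing end is the expected one; this is precisely where $\deg_C(\omega)=2$ is used, since it says exactly two arc-pieces run into $\omega$. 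Hence $\overline{C}\cong S^1$. The remaining case, $H$ an infinite double ray, must be ruled out: by compactness of $|\Gamma|$ the ends of $H$ accumulate to some end $\omega^\ast$ of $\Gamma$, which lies in the closed set $\overline{C}$; since no tail of any $D_j$ converges to an end other than the $\omega_i$'s, one checks that $\omega^\ast$ cannot equal any $\omega_i$ and that no ray of $C$ lies in its class, so $\deg_C(\omega^\ast)=0$, contradicting the hypothesis. I expect this last exclusion, together with the verification that a finite cyclic concatenation of arcs carries exactly the circle topology inherited from $|\Gamma|$ at its end-points, to be the main obstacle; the rest is the elementary structure theory of subgraphs with all vertices of degree $2$. (This is essentially Bruhn and Stein's argument in \cite[Proposition 3]{degree}.)
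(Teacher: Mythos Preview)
The paper does not supply its own proof of this lemma: it is cited from Bruhn and Stein and closed without argument, so there is nothing in the paper to compare your attempt against.

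Your sketch follows the standard route and is essentially sound. Two points are worth tightening. The step ``$\overline{C}$ connected forces $H$ connected'' is not immediate if $H$ could be infinite, since the closures of distinct components of $H$ need not be clopen in $\overline{C}$ once their ends can accumulate onto one another. Likewise, the local topology check at each end $\omega$ --- that a sufficiently small basic neighbourhood of $\omega$ meets $\overline{C}$ only in the two designated tails and $\omega$ itself --- tacitly uses that there are only finitely many $D_j$ and $\omega_i$ to push out of that neighbourhood. Both issues are resolved by running your compactness argument \emph{first}: if there were infinitely many $\omega_i$, they would accumulate at some end $\omega^\ast\in\overline{C}$ with $C$-degree~$0$, contradicting the hypothesis; hence $H$ is finite. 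With $H$ finite, a disconnection of $H$ does give a clopen partition of $\overline{C}$, and with only finitely many $D_j$ and $\omega_i$ in play one can enlarge the separating set so that the basic neighbourhood of $\omega$ contains nothing of $\overline{C}$ but the two tails converging to $\omega$.
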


\begin{thm}\label{index2 hamilton}
Let~$G=\langle S \rangle$ be a two-ended group with a subgroup~$H$ of index~$2$ such that~$H\cap S=\emptyset$. 
If any Cayley graph of~$H$ is Hamiltonian, then~$\Gamma(G,S)$ is also Hamiltonian. 	
\end{thm}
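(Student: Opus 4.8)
The plan is to exploit the structure of two-ended groups given by Theorem~\ref{stallings}. Since $G$ is two-ended and $H$ has index $2$ with $H \cap S = \emptyset$, every generator $s \in S$ lies in the nontrivial coset $gH$ (where $g \notin H$); in particular $s^2 \in H$ for every $s \in S$, and more generally a word in $S$ lies in $H$ if and only if it has even length. So the Cayley graph $\Gamma(G,S)$ is bipartite with the two color classes being $H$ and $gH$, and every edge of $\Gamma(G,S)$ goes between them.

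**Key steps.**

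First I would pick a Hamilton structure on $H$. Since any Cayley graph of $H$ is Hamiltonian and $H$ is itself two-ended (index $2$ in a two-ended group, hence the same number of ends), one should first decide on a convenient generating set for $H$. The natural choice is $S^2$ restricted appropriately, or better: for each ordered pair $(s,t) \in S \times S$ the element $st$ lies in $H$, and the set $T := \{ st : s,t \in S\}$ generates $H$ (as $H$ has index $2$ and $G = \langle S \rangle$). Take a Hamilton circle (or Hamilton cycle, but $H$ is infinite so it is a Hamilton circle) $D$ of $\Gamma(H, T)$; by Lemma~\ref{What is HC} applied to the two-ended graph $\Gamma(H,T)$, this circle is $R_1 \sqcup R_2$ for two disjoint double rays $R_1, R_2$ covering $H$, each with a tail in each of the two ends. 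Now I would lift this to $G$: each edge of $\Gamma(H,T)$ labelled $st$ from a vertex $h$ corresponds to a path $h, hs, hst$ of length $2$ in $\Gamma(G,S)$ through the vertex $hs \in gH$. Replacing every edge of $D$ by the corresponding length-$2$ path yields two double rays $R_1', R_2'$ in $\Gamma(G,S)$ that together cover all of $H$ and visit vertices of $gH$; the crucial point is to choose, for each edge of $D$, a factorization $st$ so that the intermediate vertices used across all edges of $D$ are \emph{distinct} and cover $gH$ exactly once.

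That last requirement is exactly a perfect matching condition. Concretely: the edges of the circle $D$ incident to a given $h \in H$ number exactly two, and I want the two chosen midpoints (neighbors of $h$ in $gH$ along the two lifted paths) to be two \emph{distinct} elements of $gH$; globally the chosen midpoints should biject with $gH$. Since each midpoint $hs$ determines the factorization, this is the statement that one can orient/label the circle $D$ consistently. I would handle this by working directly in $\Gamma(G,S)$: form an auxiliary bipartite graph between $E(D)$ and $gH$, where edge $e = h\,{-}\,h'$ of $D$ (with $h' = h \cdot st$) is joined to the midpoint candidates $hs \in gH$; a system of distinct representatives covering $gH$ gives the lift. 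Local finiteness plus the $2$-regularity of $D$ should make this matching exist via a König/Hall argument for locally finite bipartite graphs, or more cleanly by just directly describing the double rays: orient $D$, and for the edge leaving $h$ in the positive direction with label $st$, use midpoint $hs$; one checks along a double ray of $D$ that consecutive edges then never demand the same midpoint.

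**Finishing and the main obstacle.**

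Once $R_1', R_2'$ are constructed, they are disjoint double rays in $\Gamma(G,S)$ with $R_1' \cup R_2' = G$, and each $R_i'$ has a tail in each end of $\Gamma(G,S)$ (because the corresponding $R_i$ did, and a tail of a double ray maps to a tail under this subdivision-type lift, and $\Gamma(G,S)$ also has exactly two ends by Theorem~\ref{stallings}(iv) or the invariance of $e(G)$). Then Lemma~\ref{What is HC} applied to $\Gamma(G,S)$ gives that $R_1' \sqcup R_2'$ is a Hamilton circle, so $\Gamma(G,S)$ is Hamiltonian. I expect the main obstacle to be the midpoint-selection step: verifying that one genuinely can lift $D$ to double rays whose intermediate vertices tile $gH$ without collision. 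The cleanest route is probably to not pick $T = \{st\}$ abstractly but to observe that in the bipartite graph $\Gamma(G,S)$ with parts $H$ and $gH$, a Hamilton circle of the ``square'' of the $H$-side naturally alternates $H, gH, H, gH, \dots$, and that a Hamilton circle of $\Gamma(H, T)$ is by construction such an alternating object once one remembers each $T$-edge came from a specific $s \in S$; so the real content is choosing the Hamilton circle of $\Gamma(H,T)$ together with, for each of its edges, a witnessing generator $s$, in a globally consistent way. I would phrase the lemma used for $H$ as producing not just a Hamilton circle but a ``traceable'' one through midpoints, and push the index-$2$ bipartiteness to guarantee the midpoints exhaust $gH$.
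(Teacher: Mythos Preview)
Your overall strategy coincides with the paper's: pick a generating set for $H$ built out of $S$, take a Hamilton circle of the resulting Cayley graph of $H$ (two disjoint spanning double rays, by Lemma~\ref{What is HC}), lift each edge to a length-two path through a midpoint in the nontrivial coset, and verify that the two lifted double rays again satisfy Lemma~\ref{What is HC} in $\Gamma(G,S)$. You also correctly isolate the crux of the argument, the midpoint-selection step.

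The gap is that your proposed resolutions of that step do not work as stated. With $T=S^2$, an edge of $D$ labelled $\tau$ may admit several factorisations $\tau=st$, and your orientation trick (``for the edge leaving $h$ in the positive direction with label $st$, use midpoint $hs$'') still leaves the choice of $s$ open. If the first factor $s=s_\tau$ is allowed to vary with the label $\tau$, there is no reason for the map $h\mapsto h\,s_{\tau(h)}$ (sending each $h$ to the midpoint of its outgoing edge) to be injective, let alone to hit every element of $gH$; two vertices $h_1\neq h_2$ with $h_1 s_{\tau(h_1)}=h_2 s_{\tau(h_2)}$ would force the lifted walks to collide. The Hall/K\"onig suggestion is likewise only a gesture: you would need a \emph{perfect} matching in an infinite bipartite graph, and you give no verification of the Hall condition on either side.

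The paper removes this difficulty with one clean idea you are missing. Instead of $T=S^2$, fix a single $g\in S$ and use the generating set $gS$ for $H$; one checks $\langle gS\rangle=\langle S^2\rangle=H$ via $s_is_j=(gs_i^{-1})^{-1}(gs_j)$, using that $S$ is symmetric. Now every edge label in the Hamilton circle of $\Gamma(H,gS)$ is of the form $g\cdot s'$ with $s'\in S$, so the lift of an edge starting at $h$ is forced to be $h\to hg\to hgs'$. The midpoints are then exactly $\{hg:h\in H\}=Hg$, automatically pairwise distinct and exhausting the nontrivial coset, with no matching argument needed. With this choice of generating set the remainder of your outline (disjointness of the two lifted double rays, coverage of $G$, and tails in both ends) goes through as you describe.
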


\begin{proof}
First we notice that~$H$ is two-ended, see \cite[Lemma 5.6]{ScottWall}.
Let~${g\in S}$. 
We claim that~$gS$ generates~$H$.
Since the index~$H$ in~$G$ is 2, we conclude that~$S^2$ generates~$H$.
So it is enough to show that~${\langle gS\rangle=\langle S^2\rangle}$.
In order to verify this we only need to show that~${s_is_j\in \langle gS\rangle}$, 
where~${s_i,s_j\in S}$.
Since both of~${gs_i^{-1}}$ and~$gs_j$ lie in~$gS$, we are able to conclude that~$s_is_j$ belongs to~$\langle gS\rangle$.
We now suppose that~${\mathcal R_1\sqcup \mathcal R_2}$ is a Hamilton circle in~${\Gamma(H,gS)}$.
Let
$$\mathcal R_i=[ \ldots , ss_{i_{-2}},ss_{i_{-1}}]g_i[ss_{i_1},ss_{i_2}, \ldots ],$$ 
where~$s_{i_j}\in S$ for~${i=1,2}$ and~${j\in\Z\setminus \{0\}}$.
Without loss of generality we can assume that~${g_1=1}$.
We will now ``expand" the double rays~$\Rcal_i$ to double rays in~$\Gamma(G,S)$. 
So we define 
$$\mathcal R'_i:=[ \ldots , s,s_{i_{-2}},s,s_{i_{-1}}]  g_i[s,s_{i_1},s,s_{i_2}, \ldots ]$$ 
for~${i=1,2}$.
We note that~$S\cap H=\emptyset$.
First we show that~$\Rcal_i^\prime$ really is a double ray. 
This follows directly from the definition of~$\Rcal_i^\prime$ and the fact that~$\Rcal_i$ is a double ray. 
It remains to show that~${\mathcal R'_1}$ and~${\mathcal R'_2}$ are disjoint and moreover their union covers each vertex of~$\Gamma(G,S)$.
Suppose that~$\Rcal_1^\prime$ and~$\Rcal_2^\prime$ meet, and let~$v \in~\Rcal_1^\prime \cap \Rcal_2^\prime$ with the minimal distance in~$\Rcal_1^\prime$ from the vertex~1. 
Now we have the case that~$v \in H$ or~$v \notin H$. 
Both cases directly give a contradiction. 
From~${v \in H}$ we can conclude that~$\Rcal_1$ and~$\Rcal_2$ meet, which contradicts our assumptions. 
Assume that~${v \notin H}$.
Without loss of generality we may assume that $1\neq v$.
Suppose that the path from~$1$ to~$v$ in~$\Rcal^\prime_1$.
This implies that~${v s^{-1} \in H}$ and~$v s^{-1}  \in \Rcal_1^\prime ,\Rcal_2^\prime$. 
But this contradicts both the minimality of the distance of~$v$ from~$1$ and the fact that~$v s^{-1} \in \Rcal_1,\Rcal_2$.
 
It remains to show that~$\Rcal_1^\prime$ and~$\Rcal_2^\prime$ each have a tail in each of the two ends of~$\Gamma(G,S)$. 
Let~$\omega$ and~$\omega^\prime$ be the two ends of~$\Gamma(G,S)$ and let~$X$ be a finite vertex set such that~$C(X,\omega) \cap C(X,\omega^\prime) = \emptyset$.  
It remains to show that~$\Rcal_i^\prime$ has a tail in both~$C(X, \omega)$ and~$C(X, \omega^\prime)$. 
By symmetry it is enough to show that~$\Rcal_i^\prime$ has a tail in~$C\defi C(X,\omega)$. 
Let~$C_H$ be the set of vertices in~$C$ which are contained in~$H$.
By construction of~$\Rcal_i^\prime$ we know that~$\Rcal_i^\prime \cap C_H$ is infinite, and as~$\Gamma(G,S)$ is infinite and as~$\Rcal_i^\prime$ is connected, we can conclude that~$C$ contains a tail of~$\Rcal_i^\prime$. 
\end{proof}

\begin{coro}
Let~$G$ be a two-ended group such that any Cayley graph of~$G$ is Hamiltonian.
If~$H=\langle S\rangle$ is any extension of~$G$ by~$\Z_2$ in such a way that~${S\cap G=\emptyset}$, then~$\Gamma(H,S)$ is Hamiltonian.\qed
\end{coro}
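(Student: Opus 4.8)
The plan is to obtain the corollary as a direct application of Theorem~\ref{index2 hamilton}, so that all that remains is to check that its hypotheses are met, with the role of the ``two-ended group'' played by $H$ and the role of the ``index-two subgroup'' played by $G$.

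First I would record the elementary facts. By the standing convention that all generating sets are finite, $H=\langle S\rangle$ is finitely generated. Since $H$ is an extension of $G$ by $\Z_2$, there is a short exact sequence $1\to G\to H\to \Z_2\to 1$; in particular $G$ is a subgroup of $H$ with $[H:G]=|\Z_2|=2$, and $S\cap G=\emptyset$ by hypothesis.

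Next I would verify that $H$ is two-ended, which is the only point that genuinely needs an argument. Since $G$ is two-ended, Theorem~\ref{stallings} provides an infinite cyclic subgroup $K\le G$ of finite index in $G$. Then $[H:K]=[H:G]\cdot[G:K]=2[G:K]<\infty$, so $H$ too has an infinite cyclic subgroup of finite index and is therefore two-ended, again by Theorem~\ref{stallings}. (One could equally cite that passing to a finite-index subgroup does not change the number of ends.)

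Finally, $H=\langle S\rangle$ is a two-ended group whose index-two subgroup $G$ satisfies $S\cap G=\emptyset$, and every Cayley graph of $G$ is Hamiltonian by hypothesis; hence Theorem~\ref{index2 hamilton} applies and yields that $\Gamma(H,S)$ is Hamiltonian. I do not expect a real obstacle here: the corollary is essentially a restatement of the theorem in the language of group extensions, and the only step beyond unravelling definitions is the inheritance of two-endedness by $H$ from its finite-index subgroup $G$.
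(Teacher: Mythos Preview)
Your proposal is correct and matches the paper's intent: the corollary is marked \qed\ with no proof, signalling that it is an immediate consequence of Theorem~\ref{index2 hamilton} once one unwinds the definition of ``extension by $\Z_2$'' and observes that $H$ inherits two-endedness from its finite-index subgroup $G$. You have filled in exactly those details, and there is nothing to add.
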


 With an analogous method of the proof of Theorem \ref{index2 hamilton}, one can prove the following theorem.

\begin{thm}\label{index2hamiltonarc}
Let~$G=\langle S \rangle$ be a two-ended group with a subgroup~$H$ of index 2 such that~$H\cap S=\emptyset$. 
If any Cayley graph of~$H$ contains a Hamilton double ray, then so does~$\Gamma(G,S)$.\qed
\end{thm}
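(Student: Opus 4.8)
The plan is to mimic the proof of Theorem~\ref{index2 hamilton} almost verbatim, replacing the Hamilton circle with a Hamilton double ray throughout. First I would recall that $H$ is again two-ended by \cite[Lemma 5.6]{ScottWall}, and that for any fixed $g\in S$ the set $gS$ generates $H$; both facts are established exactly as in the proof of Theorem~\ref{index2 hamilton}, since neither uses anything about the specific spanning structure of $\Gamma(H,gS)$. Here we do not even need the two-endedness of $H$ as heavily, but it costs nothing to carry it along.

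Next, by hypothesis $\Gamma(H,gS)$ contains a Hamilton double ray, say $\Rcal = [\ldots, s\,s_{-2}, s\,s_{-1}]\,g_0\,[s\,s_1, s\,s_2, \ldots]$ with each $s_j\in S$; as before we may assume $g_0 = 1$. I would then "expand" it to
$$\Rcal' := [\ldots, s, s_{-2}, s, s_{-1}]\,1\,[s, s_1, s, s_2, \ldots]$$
in $\Gamma(G,S)$, inserting an $s$-edge before each original step. Since $S\cap H = \emptyset$, the new vertices lie off $H$ and alternate with the old ones. The verification that $\Rcal'$ is a genuine double ray (no repeated vertices) and that it covers every vertex of $\Gamma(G,S)$ is the single-ray analogue of the disjointness-plus-covering argument in the proof of Theorem~\ref{index2 hamilton}: a putative repeated vertex either lies in $H$, contradicting that $\Rcal$ is a double ray, or lies outside $H$, in which case following the $s$-edge back lands on a repeated vertex of $\Rcal$, again a contradiction. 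Covering follows because every $h\in H$ appears on $\Rcal$, and every coset representative $hs$ with $h\in H$ appears on $\Rcal'$ as the inserted vertex just before the occurrence of $h$ (using that every element of $G\setminus H$ has the form $hs$ for some $h\in H$, as $[G:H]=2$ and $s\notin H$).

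The main thing to be careful about — and the only place the argument differs from Theorem~\ref{index2 hamilton} — is that a Hamilton double ray is a \emph{single} double ray rather than a pair, so there is nothing to check about two rays being disjoint, and correspondingly we must show $\Rcal'$ alone is a Hamilton double ray, i.e. it is a double ray spanning $\Gamma(G,S)$. There is no need to check any condition about tails living in the ends, since the definition of Hamilton double ray (a connected Hamilton arc) imposes no such requirement. So the proof is in fact slightly shorter than that of Theorem~\ref{index2 hamilton}: one builds $\Rcal'$, checks it is a double ray, and checks it is spanning. I expect no real obstacle; the only mild subtlety is bookkeeping the alternation of inserted and original vertices so that the "no repeated vertex" check is clean, and confirming that every element of $G$ outside $H$ is hit exactly once by the inserted vertices.
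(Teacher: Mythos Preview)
Your proposal is correct and matches the paper's approach exactly: the paper gives no separate proof but simply states that ``with an analogous method of the proof of Theorem~\ref{index2 hamilton}, one can prove'' this result, and what you have outlined is precisely that analogous argument with a single double ray in place of the pair $\Rcal_1,\Rcal_2$. One small remark: your claim that the definition of Hamilton double ray ``imposes no such requirement'' about tails in the ends is not quite accurate---a Hamilton arc does contain every end---but your conclusion is still right, since any spanning double ray in a two-ended graph automatically has its two tails in distinct ends (otherwise one end's component past a finite separator would contain only finitely many vertices of the ray, contradicting spanning).
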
	

\begin{lemma}\label{arcZ}
Any Cayley graph of~$\bbZ$ contains a Hamilton double ray. 
\end{lemma}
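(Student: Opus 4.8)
The plan is to analyze the structure of an arbitrary finite symmetric generating set $S$ of $\bbZ$ and exhibit an explicit Hamilton double ray. First I would fix $S = \{\pm a_1, \ldots, \pm a_k\}$ with $a_1 < a_2 < \cdots < a_k$ positive integers, and note that since $S$ generates $\bbZ$ we have $\gcd(a_1,\ldots,a_k) = 1$; in particular $a_1$ need not be $1$, so the naive double ray $\ldots, -1, 0, 1, 2, \ldots$ using the generator $1$ may not be available. The simplest honest approach: since $1 \in \langle S \rangle$, there is a walk in $\Gamma(\bbZ,S)$ from $0$ to $1$; but a walk is not a path, so instead I would argue directly. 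Observe that $\Gamma(\bbZ, S)$ is a connected, locally finite, vertex-transitive graph with exactly two ends (it is quasi-isometric to the line).

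The cleanest route is to reduce to the generator $a_1$ and handle the residues it misses. Partition $\bbZ$ into the $a_1$ residue classes modulo $a_1$: the class of $0$ is spanned by the double ray $\ldots, -2a_1, -a_1, 0, a_1, 2a_1, \ldots$ using only the edge labelled $a_1$. To stitch in the other $a_1 - 1$ residue classes, I would use the fact that $\gcd(a_1, \ldots, a_k) = 1$ to find, for each residue $r$ with $1 \le r \le a_1 - 1$, a way to "jump" between consecutive residue classes using some generator $a_j$; concretely one arranges the $a_1$ parallel $a_1$-rays into a single double ray by traversing one ray far out to the right, hopping to the next residue class by an $a_j$-edge, coming back, and so on in a zig-zag, being careful that each hop is used exactly once and no vertex is repeated. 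Actually, a slicker formulation avoids residue bookkeeping: it suffices to find two elements $s, t \in S$ (not necessarily distinct) with $s + t$ dividing into the structure nicely — but the honest minimal claim I would prove is just the two-ended case via Lemma~\ref{What is HC}, producing $R_1 \sqcup R_2$.

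Here is the approach I would actually commit to. Since $\bbZ = \langle S \rangle$ with $S$ finite and symmetric, pick any $s \in S$, $s > 0$, and let $d = s$. The subgroup $d\bbZ$ has finite index $d$ in $\bbZ$, and the single edge-label $s$ already gives a Hamilton double ray of the subgraph induced on $d\bbZ$. Now I would induct on the index, or more directly: because $\gcd$ of $S$ is $1$, there exists $s' \in S$ with $s' \not\equiv 0 \pmod d$; let $e = \gcd(s, s') < s$. One checks that the double ray obtained by following $s$-edges and occasionally an $s'$-edge can be organized to cover exactly the coset structure of $e\bbZ$, strictly decreasing the "defect" $s \to e$. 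Iterating, after finitely many steps the defect reaches $1$ and we obtain a Hamilton double ray of all of $\bbZ$. The main obstacle — and the step I'd expect to need real care — is precisely this stitching: ensuring that when we splice together the $d/e$ parallel $s$-rays using $s'$-edges we get a genuine double ray (each vertex degree exactly $2$, connected, both tails in distinct ends) rather than a disjoint union of rays or a ray-plus-ray configuration; checking the end condition is immediate by local finiteness and connectedness (as in the last paragraph of the proof of Theorem~\ref{index2 hamilton}), so the heart of the matter is the combinatorial bookkeeping of the zig-zag, which for $\bbZ$ is manageable because everything is linearly ordered.
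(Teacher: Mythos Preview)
Your plan is essentially the paper's proof, just run in the opposite direction: the paper inducts on~$|S|$ by \emph{removing} a generator~$s$, applying the induction hypothesis to~$H=\langle S\setminus\{s^{\pm}\}\rangle\cong\bbZ$ to obtain a Hamilton double ray~$R_H=[\ldots,x_{-1}]1[x_1,\ldots]$ of~$H$, and then threading through the~$k=[\bbZ:H]$ cosets with the explicit zig-zag
\[
\cdots[s^{-1}]^{k-1}[x_{-2}][s]^{k-1}[x_{-1}]\,1\,[s]^{k-1}[x_1][s^{-1}]^{k-1}[x_2]\cdots,
\]
using only that~$\bbZ$ is abelian and that~$s$ generates~$\bbZ/H$ to verify coverage. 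Your bottom-up iteration (start from one generator, splice in another to shrink the gcd) unwinds to exactly this induction, and the ``combinatorial bookkeeping of the zig-zag'' you flag as the main obstacle is precisely the displayed formula above; the top-down framing is slightly cleaner because the inductive hypothesis hands you the base double ray directly, so the splicing step is uniform rather than accumulating complexity across iterations.
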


\begin{proof}
Let~$\bbZ = \langle S \rangle$. 
We proof Lemma~\ref{arcZ} by induction on~$|S|$. 
For~$|S| = 2$ there is nothing to show. 
So we may assume that~$|S| \geq 2$ and any Cayley graph of~$\bbZ$ with less than~$|S|$ generators contains a Hamilton double ray. 
Let~$s \in S$ and define~$H \defi \langle S\setminus s \rangle$.
Because~$H$ is a subgroup of~$\Z$ we know that~$H$ is cyclic.   
By the induction hypothesis we know that there is a Hamilton double ray of~$H$, say~$R_H = [\ldots x_{-2},x_{-1}]1[x_1,x_2,\ldots]$. 
Let~$k \defi [\Z:H]$, note that~$k \in \bbN$. 
So we have~$G = \Sqcup_{i=0}^{k-1} H s^i$. 
We define 
$$ R \defi \cdots [s^-1]^{k-1}[x_{-2}] [s]^{k-1} [x_{-1}]1[s]^{k-1} [ x_1] [s^{-1}]^{-(k-1)} [x_2] \cdots~$$
As~$\bbZ$ is abelian we can conclude that~$R$ covers all vertices of~$\Gamma(G,S)$. 
It remains to show that~$R$ has tails in both ends of~$\Gamma(G,S)$ which follows directly from the fact that~$R_H$ is a Hamilton arc of~$H$ and the fact that the index of~$H$ in~$G$ is finite.  
\end{proof}

 Witte \cite{wittedigraphs} has shown that any Cayley graph of a finite dihedral group contains a Hamilton path. 
\begin{lemma}{\rm\cite[Corollary 5.2]{wittedigraphs}}\label{hamilton path}
Any Cayley graph of the finite dihedral group contains a Hamilton path.\qed
\end{lemma}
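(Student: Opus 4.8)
The plan is to induct on $|G|$, writing $G=D_{2n}=\langle a,b\mid b^{2},a^{n},(ba)^{2}\rangle$ and letting $S$ be a finite symmetric generating set of~$G$; the cases $|G|\le 4$ are checked directly. Set $R=\langle a\rangle$, the cyclic subgroup of index~$2$, so that $V(\Gamma(G,S))=R\cupdot bR$. Right multiplication by an element of $S\cap R$ fixes each of $R$ and $bR$ setwise, whereas right multiplication by a \emph{reflection} (an element of $S\cap bR$, necessarily an involution) swaps them; hence the reflection-labelled edges of $\Gamma(G,S)$ form a disjoint union of perfect matchings between $R$ and $bR$. Since $R\ne G$, the set $S$ is not contained in~$R$, so it contains at least one reflection~$t$.

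First I would handle the case $\langle S\cap R\rangle=R$. Here the subgraph of $\Gamma(G,S)$ spanned by~$R$ and the edges with labels in $S\cap R$ is isomorphic to $\Gamma(\mathbb Z_{n},S\cap R)$, and likewise for~$bR$; for $n\ge 3$ this is a connected circulant graph and so has a Hamilton cycle (this can also be extracted from the argument of Lemma~\ref{arcZ}). Pick $v\in R$, let $w=vt\in bR$ be its partner under the matching of~$t$, delete an edge at~$v$ from a Hamilton cycle of the $R$-part to get a Hamilton path of~$R$ ending at~$v$, do the same in~$bR$ ending at~$w$, and join the two paths by the edge~$vw$: this is a Hamilton path of $\Gamma(G,S)$.

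In the remaining case $\langle S\cap R\rangle\subsetneq R$ the set~$S$ must contain at least two reflections, since otherwise $\langle S\rangle=\langle\langle S\cap R\rangle,t\rangle$ would be a proper dihedral subgroup of~$G$. Deleting redundant generators (which only passes to a spanning subgraph of $\Gamma(G,S)$ and keeps $\langle S\cap R\rangle\subsetneq R$) we may assume $S$ is irredundant, so that every reflection in~$S$ is essential. Fix a reflection $t\in S$ and put $H:=\langle S\setminus\{t\}\rangle$, a proper subgroup; since $S\setminus\{t\}$ still contains a reflection, $H$ is a dihedral group $D_{2m}$ with $m\mid n$ and $m<n$. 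As $S\setminus\{t\}\subseteq H$, the cosets of~$H$ partition $\Gamma(G,S\setminus\{t\})$ into $k:=n/m$ disjoint copies of $\Gamma(H,S\setminus\{t\})$, each of which has a Hamilton path by induction. It then remains to splice these $k$ sub-paths into a single Hamilton path of $\Gamma(G,S)$ by traversing them in a suitable order and crossing from each to the next along an edge of the $t$-matching.

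The splicing step is where I expect the real difficulty. One endpoint of each sub-path may be prescribed at will, because a Cayley graph is vertex-transitive and hence, once it has a Hamilton path at all, has one starting at any chosen vertex --- this already settles $k=2$ (take any $t$-edge $uv$ between the two copies and Hamilton paths of the copies starting at~$u$ and at~$v$). For $k\ge 3$ the interior sub-paths must instead join two prescribed vertices, which is not guaranteed since Cayley graphs need not be Hamilton-connected; in particular, when $S\cap R=\emptyset$ each copy is a balanced bipartite graph, forcing the two crossing vertices onto opposite sides. Overcoming this calls for a careful ordering and parity analysis of the $t$-matching, with occasional rerouting inside a copy, which can be packaged into a slightly strengthened induction hypothesis that also tracks the position of a prescribed endpoint relative to the coset structure. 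This is precisely the part carried out in~\cite{wittedigraphs}.
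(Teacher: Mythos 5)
This lemma is not proved in the paper at all: it is quoted from Witte \cite{wittedigraphs} with a \qed, so the only ``proof'' on record is the citation, and your argument has to stand on its own. It does not. The reductions you carry out are fine --- the base cases, the case $\langle S\cap R\rangle=R$ (Hamilton cycles in the two circulant halves joined by one reflection edge), the passage to an irredundant $S$, and the observation that a proper subgroup $H=\langle S\setminus\{t\}\rangle$ containing a reflection is again dihedral, so induction gives a Hamilton path in each component of $\Gamma(G,S\setminus\{t\})$. But the entire content of the lemma is concentrated in the splicing step for $k=[G:H]\ge 3$, where the interior cosets need Hamilton paths with \emph{both} endpoints prescribed; you correctly identify that vertex-transitivity only lets you prescribe one endpoint, note that the bipartite case forces a parity obstruction, and then write that resolving this ``is precisely the part carried out in \cite{wittedigraphs}.'' Deferring the crux of the statement to the reference being proved is a genuine gap, not a proof.

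There is also a concrete structural problem with the picture you propose for that step. The components of $\Gamma(G,S\setminus\{t\})$ are the left cosets $gH$, but a dihedral subgroup $H=\langle a^{d},a^{j}b\rangle$ is normal in $D_{2n}$ only when $d\le 2$, and the reflection $t$ need not normalize $H$ either: two $t$-edges $gh_{1}\,\text{--}\,gh_{1}t$ and $gh_{2}\,\text{--}\,gh_{2}t$ leave $gH$ into the same coset iff $h_{1}^{-1}h_{2}\in H\cap tHt^{-1}$, so the $t$-edges out of a single coset are in general distributed over $[H:H\cap tHt^{-1}]>1$ cosets. For instance, in $D_{24}$ with the irredundant set $S=\{a^{\pm4},b,a^{3}b\}$ and $t=a^{3}b$, one gets $H=\langle a^{4},b\rangle$ of index $4$, and the six $t$-edges leaving $H$ split three to $aH$ and three to $a^{3}H$. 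So ``traversing the copies in a suitable order and crossing from each to the next along an edge of the $t$-matching'' is not even well posed as stated; the ordering and parity analysis you postpone would first have to replace this picture (for example by passing to a normal core, or by a different choice of $t$ and $H$). As written, the proposal reproduces the easy reductions and leaves Witte's actual argument undone.
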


Next we extend the above mentioned lemma from a finite dihedral group to the infinite dihedral group. 

\begin{lemma}\label{arcDinf}
Any Cayley graph of~$\Dinf$ contains a Hamilton double ray.
\end{lemma}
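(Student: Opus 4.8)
The plan is to reduce the statement about an arbitrary finite generating set of $\Dinf$ to the known structure of $\Dinf$ and, where possible, to the results already established for $\Z$ and for finite dihedral groups. Recall $\Dinf = \langle a, b \mid b^2, (ba)^2 \rangle$, so it has the infinite cyclic subgroup $\langle a \rangle$ of index $2$, and every element is either a ``rotation'' $a^k$ or a ``reflection'' $a^k b$. Given a finite symmetric generating set $S$ of $\Dinf$, the first step is to split $S$ into $S_{\mathrm r} = S \cap \langle a \rangle$ (the rotations in $S$) and $S_{\mathrm f} = S \setminus S_{\mathrm r}$ (the reflections in $S$); note every element of $S_{\mathrm f}$ is an involution. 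I would then distinguish two cases according to whether $S_{\mathrm r}$ generates $\langle a \rangle$ or not.

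If $\langle S_{\mathrm r}\rangle = \langle a\rangle$ (in particular if $S_{\mathrm r}\neq\emptyset$ and a suitable subcase analysis applies), then $H\defi\langle a\rangle$ is a subgroup of index $2$ with $H\cap S_{\mathrm f}$ possibly empty. In the cleanest subcase, where $S\cap H=\emptyset$ is \emph{not} what happens, one still wants to invoke Theorem~\ref{index2hamiltonarc}: that theorem requires $H\cap S=\emptyset$, so it does not apply directly when $S_{\mathrm r}\neq\emptyset$. Instead I would argue by hand: pick one reflection $b'\in S_{\mathrm f}$ (such a reflection exists, since $S$ generates $\Dinf$ and rotations alone cannot), use Lemma~\ref{arcZ} to get a Hamilton double ray $R_H=[\dots,x_{-1}]1[x_1,\dots]$ of $\Gamma(\langle a\rangle,S_{\mathrm r})$, and then build a Hamilton double ray of $\Gamma(\Dinf,S)$ by running along the rotation-coset copy of $R_H$, crossing once via $b'$ into the reflection coset $\langle a\rangle b'$, and running back along the mirror image of $R_H$ there. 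Concretely the reflection coset carries an isomorphic copy of $\Gamma(\langle a\rangle,S_{\mathrm r})$ (right multiplication by $b'$ conjugates $a\mapsto a^{-1}$ but preserves the edge set), so the mirror of $R_H$ is again a Hamilton double ray of that coset, and splicing the two along the single $b'$-edge at the basepoint yields a double ray covering every vertex exactly once, with one tail in each end.

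The remaining case is $\langle S_{\mathrm r}\rangle\subsetneq\langle a\rangle$, say $\langle S_{\mathrm r}\rangle=\langle a^m\rangle$ for some $m\ge 2$ (allowing $S_{\mathrm r}=\emptyset$, i.e. $m$ effectively forcing everything through reflections). Here the subgroup $K\defi\langle a^m\rangle$ has finite index in $\Dinf$, and $\Dinf/\mathsf{Core}(K)$ (with $\mathsf{Core}(K)$ of finite index, normal, and $\Dinf/\mathsf{Core}(K)$ again a finite dihedral or cyclic group) lets one push the problem down. The cleanest route: show that $S_{\mathrm f}$ together with $S_{\mathrm r}$ still generates, so there are enough reflections to connect the finitely many cosets of $\langle a\rangle$-translates; then use Lemma~\ref{hamilton path} on an appropriate finite dihedral quotient to order these cosets into a path, and lift this finite Hamilton path to a Hamilton double ray of $\Gamma(\Dinf,S)$ by replacing the ``$\langle a\rangle$-direction'' with an application of Lemma~\ref{arcZ} as in the proof of Lemma~\ref{arcZ} itself (i.e. zig-zag in and out of each coset using the reflection edges to move between cosets). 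This mirrors exactly the inductive coset-expansion technique already used for $\Z$.

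The main obstacle I anticipate is the bookkeeping in the second case: ensuring that the reflection edges used to hop between the finitely many $\langle a\rangle$-cosets can be arranged so that the resulting object is genuinely a single double ray (no premature cycle, every vertex once) and still has a tail in each of the two ends of $\Gamma(\Dinf,S)$. The two-ends condition is actually the easy part — any connected spanning subgraph that is a double ray in a two-ended locally finite graph automatically has a tail in each end once we know (as in Lemma~\ref{arcZ}) that it meets each of the two ``halves'' infinitely — so, as in the earlier proofs, I expect to verify the tails-in-both-ends condition almost for free and to spend the real effort on the combinatorial splicing that produces a connected spanning double ray from the $\Z$-double-ray and the reflection edges.
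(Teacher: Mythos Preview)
Your Case~1 construction has a genuine gap. You propose to take the Hamilton double ray $R_H$ in the rotation coset $\langle a\rangle$, its mirror copy in the reflection coset $\langle a\rangle b'$, and ``splice the two along the single $b'$-edge at the basepoint.'' But two disjoint double rays joined by a single edge do not form a double ray: the vertices $1$ and $b'$ acquire degree~$3$, and the resulting graph has four ends, not two. If instead you mean to cut each double ray at its basepoint and glue two of the resulting rays across the $b'$-edge, you cover only half of each coset and miss the other half. A single crossing is simply not enough; to absorb both full cosets into one double ray you must cross between them infinitely often.

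The paper avoids this trap by \emph{not} singling out the case $\langle S_1\rangle=\langle a\rangle$. Its case split is just $S_1=\emptyset$ versus $S_1\neq\emptyset$. When $S_1\neq\emptyset$ it sets $H=\langle S_1\rangle=\langle a^i\rangle$ (any $i\ge 1$), observes $H\trianglelefteq\Dinf$, takes a Hamilton double ray $\ldots,s_{-1},1,s_1,\ldots$ of $H$ via Lemma~\ref{arcZ}, and uses a Hamilton path $[x_1,\ldots,x_{2i-1}]$ of the finite dihedral quotient $\Dinf/H$ (Lemma~\ref{hamilton path}) to build short paths $P_j=p_j[x_1,\ldots,x_{2i-1},s_{j+1}^{-1},x_{2i-1},\ldots,x_1,s_{j+2}]$ that zig-zag through all $2i$ cosets and return; the identity $x s x = s^{-1}$ for reflections $x$ and rotations $s$ makes consecutive $P_{2j}$'s share exactly one endpoint, and their union is the desired Hamilton double ray. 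Your Case~2 sketch is essentially this construction, and in fact it already handles your Case~1 (take $i=1$: each $P_j$ becomes $p_j[x_1,s_{j+1}^{-1},x_1,s_{j+2}]$, an infinite zig-zag between the two cosets). When $S_1=\emptyset$ the paper instead invokes Theorem~\ref{index2hamiltonarc} directly, which your Case~2 sketch leaves vague (``$m$ effectively forcing everything through reflections'' gives no actual $H$ to quotient by).
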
   

\begin{proof}
	Let~$S$ be an arbitrary generating set of~${\Dinf=\langle a,b \mid b^2 = (ab)^2 =1 \rangle}$.
Let~$S_1$ be a maximal subset of~$S$ in a such way that~$S_1\subseteq \langle a\rangle$ and set~$S_2:=S\setminus S_1$.
We note that each element of~$S_2$ can be expressed as~$a^jb$ which has order~$2$ for  every~$j\in \Z$.
First we consider the case that~$S_1$ is not empty.
Assume that~$H=\langle a^i\rangle$ is the subgroup generated by~$S_1$.
We note that~$H\mathsf{char}\langle a \rangle\unlhd \Dinf$ and so we infer that~$H\unlhd \Dinf$.
It follows from Lemma \ref{arcZ} that we have the following double ray~$\Rcal$:
$$[\ldots,s_{-2},s_{-1}]1[s_1,s_2,\ldots],$$
spanning~$H$ with each~$s_i\in S_1$ for~$i\in \Z\setminus \{0\}$.
We notice that~${\Dinf /H=\langle \overline{S_2}\rangle}$ is isomorphic to~$D_{2i}$ for some~$i\in\N$ and by Lemma~\ref{hamilton path} we are able to find a Hamilton path of~${\Dinf /H}$, say~${[x_1H,\ldots,x_{2i-1}H]}$, each~$x_{\ell}\in{ S_2}$ for $\ell\in\{1,\ldots,2i-1\}$.
On the other hand, the equality~$bab=a^{-1}$ implies that~${ba^tb=a^{-t}}$ for every~$t\in \Z$ and we deduce that~${xa^tx=a^{-t}}$ for every~${t\in \Z}$ and~${x\in \Dinf \setminus \langle a \rangle}$.\footnote{This follows as every element of~$\Dinf\setminus \langle a \rangle$ can be presented by~$a^ib$ for~$i\in \Z$.}
In other words, we can conclude that~$xs_ix=s_i^{-1}$ for each~$s_i\in S_1$ and~$x\in \Dinf \setminus \langle a \rangle$.
We now define a double ray~$\Rcal^\prime$ in~$\Dinf$ and we show that it is a Hamiltonian double ray.
In order to construct~$\Rcal'$, we define a union of paths. Set 
$$P_j:=p_j[x_1,\ldots,x_{2i-1},s_{j+1}^{-1},x_{2i-1},\ldots,x_1,s_{j+2}],$$
where~$p_j:=s_1\cdots s_{j}$ whenever~$j>0$ and~$p_0=1$ and~$p_j=s_{-1}\cdots s_j$ whenever~${j<0}$.
It is straight forward to see that~$P_{2j}$ and~$P_{2(j+1)}$ meet in exactly one vertex.
We claim that the collection of all~$P_{2j}$'s are pairwise edge disjoint for~${j\in \Z}$.
We only show the following case and we leave the other cases to the readers.
Assume that~$p_{2j}x_1\cdots x_{\ell}$ meets with~$p_{2j'}x_1\cdots x_{2i-1}s_{2j'+1}^{-1}x_{2i-1}\cdots x_{\ell'}$, where~$j<j'$ and~$\ell\leq\ell'$.
It is enough to verify~$\ell=\ell'$.
It is not hard to see that~$p_{2j}x_1\cdots x_{\ell}
=p_{2j'}s_{2j'+1}^{-1}x_1\cdots x_{\ell'}$.
We can see that the left hand side of the equality belongs to the coset~$Hx_1\cdots x_{\ell}$ and the other lies in~$Hx_1\cdots x_{\ell'}$ and so we conclude that~$\ell=\ell'$.
We  are now ready to define our desired double ray.
We define
$$\Rcal ^\prime:=\bigcup_{j\in \Z}P_{2j}.$$
It is straight forward to check~$\Rcal^\prime$ contains every element of~$\Dinf$, 
thus we conclude that~$\Rcal'$ is a Hamilton double ray, as desired.

If~$S_1$ is empty, then~${S\cap \langle a\rangle=\emptyset}$ and Theorem \ref{index2hamiltonarc} completes the proof.
\end{proof}

With a slight change to the proof of Lemma~\ref{arcDinf} we can obtain a Hamilton circle for~$\Dinf$. 

\begin{thm}
The Cayley graph of~$\Dinf$ is Hamiltonian for any generating set~$S$ with~${|S|\geq 3}$.
\end{thm}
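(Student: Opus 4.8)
The plan is to follow the blueprint of Lemma~\ref{arcDinf}, but instead of producing a Hamilton double ray we produce a pair of disjoint double rays satisfying the hypotheses of Lemma~\ref{What is HC}, which then glue into a Hamilton circle. As in Lemma~\ref{arcDinf}, write $\Dinf = \langle a, b \mid b^2 = (ab)^2 = 1\rangle$, split an arbitrary generating set $S$ as $S_1 \sqcup S_2$ with $S_1$ a maximal subset lying in $\langle a\rangle$ and $S_2$ the complement (each element of $S_2$ an involution of the form $a^j b$). First I would dispose of the case $S_1 = \emptyset$: then $S \cap \langle a \rangle = \emptyset$, so $\langle a \rangle$ has index $2$ and misses $S$, and since $|S| \geq 3$ one can invoke Theorem~\ref{index2 hamilton} together with the (easy) fact that $\bbZ$ has a Hamiltonian Cayley graph — or directly adapt Theorem~\ref{index2hamiltonarc} plus a second double ray — to get the circle. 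Actually the cleanest route here is: $\langle a\rangle \cong \bbZ$ is two-ended, any Cayley graph of $\bbZ$ is Hamiltonian (trivially, it is a double ray or better), and $H \cap S = \emptyset$, so Theorem~\ref{index2 hamilton} applies verbatim.

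The substantive case is $S_1 \neq \emptyset$. Here I would reuse the structure from Lemma~\ref{arcDinf}: let $H = \langle S_1 \rangle = \langle a^i \rangle \unlhd \Dinf$, take the Hamilton double ray $\Rcal = [\ldots, s_{-2}, s_{-1}] 1 [s_1, s_2, \ldots]$ spanning $H$ from Lemma~\ref{arcZ}, and let $[x_1 H, \ldots, x_{2i-1}H]$ be the Hamilton path of the finite quotient $\Dinf/H \cong D_{2i}$ from Lemma~\ref{hamilton path}. In the proof of Lemma~\ref{arcDinf} the paths $P_j = p_j[x_1,\ldots,x_{2i-1}, s_{j+1}^{-1}, x_{2i-1},\ldots,x_1, s_{j+2}]$ were defined for all $j \in \Z$, but only the sub-collection $\{P_{2j}\}_{j \in \Z}$ was used, their union being a single Hamilton double ray. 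The ``slight change'' I would make is to instead take \emph{two} double rays $\Rcal'_1 := \bigcup_{j \in \Z} P_{2j}$ and $\Rcal'_2 := \bigcup_{j \in \Z} P_{2j+1}$ (the even-indexed and odd-indexed pieces). One checks, exactly as in Lemma~\ref{arcDinf}, that each is a double ray and that together they partition $V(\Dinf)$; here the hypothesis $|S| \geq 3$ is what guarantees there is enough room — in particular it rules out the degenerate situations ($|S_1| = 1$ with $|S_2| = 1$, i.e.\ the double ray itself) where $S_2$ is too small to realize the ``turning'' segments needed to separate the two rays. I would verify disjointness and covering by the same coset-counting argument used for edge-disjointness of the $P_{2j}$: a vertex on $P_m$ lies in $H x_1 \cdots x_\ell$ for a parity of $\ell$ determined by $m \bmod 2$, which keeps the even and odd families apart.

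Finally, to apply Lemma~\ref{What is HC} I must check condition (iii): both $\Rcal'_1$ and $\Rcal'_2$ have a tail in each of the two ends of $\Gamma(\Dinf, S)$. This follows as in the last paragraph of the proof of Theorem~\ref{index2 hamilton}: each $\Rcal'_k$ meets every coset $H x_1 \cdots x_\ell$ infinitely often, $H$ is a finite-index (hence two-ended, cobounded) subgroup whose double ray $\Rcal$ has one tail in each end, and $\Rcal'_k$ is connected and infinite, so for any finite $X$ separating the ends, each $\Rcal'_k$ has a tail in each component $C(X, \omega)$. Then $\Rcal'_1 \sqcup \Rcal'_2$ is a Hamilton circle by Lemma~\ref{What is HC}.

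The main obstacle I anticipate is purely bookkeeping: making precise the claim that $\{P_{2j}\}$ and $\{P_{2j+1}\}$ are \emph{vertex}-disjoint (Lemma~\ref{arcDinf} only established edge-disjointness within one family and that consecutive $P_{2j}$, $P_{2(j+1)}$ share exactly one vertex), and confirming that the glue vertices where consecutive $P_{2j}$'s meet do not accidentally land on the odd family. This is where the $|S| \geq 3$ hypothesis has to be used carefully — one should isolate the small cases ($S = \{a^{\pm 1}, a^{j}b\}$ type configurations) and argue that with at least three generators the transversal segments coming from $S_2$, or the extra generator in $S_1$, provide the slack to route $\Rcal'_1$ and $\Rcal'_2$ through disjoint vertex sets. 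I expect the clean way to present this is to exhibit the two double rays explicitly on the coset decomposition $\Dinf = \bigsqcup_\ell H x_1 \cdots x_\ell$ and let the parity of $\ell$ do all the separating work, so that vertex-disjointness becomes immediate rather than a case analysis.
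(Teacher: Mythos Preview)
Your proposed splitting into even and odd $P_j$'s does not work, and this is not a bookkeeping issue but a structural one. Recall from the proof of Lemma~\ref{arcDinf} that $P_{2j}$ runs from $p_{2j}$ up through all $2i$ cosets to $p_{2j}x_1\cdots x_{2i-1}$, takes the step $s_{2j+1}^{-1}$, and then comes back down through all cosets to $p_{2j+2}$. A direct computation (using $x s x^{-1}=s^{-1}$ for $x\notin\langle a\rangle$) shows that the vertex set of $P_{2j}$, excluding its terminal endpoint, is exactly
\[
\{\,p_{2j}\,x_1\cdots x_\ell : 0\le \ell\le 2i-1\,\}\ \cup\ \{\,p_{2j+1}\,x_1\cdots x_\ell : 0\le \ell\le 2i-1\,\}.
\]
Hence $\Rcal'_1=\bigcup_{j}P_{2j}$ already visits every vertex $p_m x_1\cdots x_\ell$ for all $m\in\Z$ and all $0\le\ell\le 2i-1$, i.e.\ all of $\Dinf$; this is precisely the Hamilton double ray produced in Lemma~\ref{arcDinf}. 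The odd paths $P_{2j+1}$ have vertex sets contained in $P_{2j}\cup P_{2j+2}$, so $\Rcal'_2=\bigcup_j P_{2j+1}$ has the \emph{same} vertex set as $\Rcal'_1$, not a disjoint one. Your suggested parity-of-$\ell$ separation cannot rescue this: every $P_j$, regardless of the parity of $j$, traverses \emph{all} cosets $Hx_1\cdots x_\ell$ for $\ell=0,\ldots,2i-1$; the index $j$ controls which fibre $p_j$ within each coset is hit, not which cosets are hit.

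The paper's modification is different and does work: it \emph{shortens} the paths to
\[
P'_j \coloneqq p_j[x_1,\ldots,x_{2i-2},\,s_{j+1}^{-1},\,x_{2i-2},\ldots,x_1,\,s_{j+2}],
\]
stopping one coset early, so that $\Rcal_1=\bigcup_j P'_{2j}$ covers every coset \emph{except} the last one $Hx_1\cdots x_{2i-1}$; that final coset is then covered by a single translate $\Rcal_2=[\ldots,s_{-2},s_{-1}]\,x_1\cdots x_{2i-1}\,[s_1,s_2,\ldots]$ of the double ray $\Rcal$ on $H$. The hypothesis $|S|\ge 3$ guarantees that the truncated $P'_j$ are still genuine paths.

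A smaller point on your $S_1=\emptyset$ case: the assertion ``any Cayley graph of $\bbZ$ is Hamiltonian (trivially, it is a double ray or better)'' is false --- the double ray itself has no Hamilton circle. What is actually needed (and what the paper uses) is that $\Gamma(\langle a\rangle, gS)$ is Hamiltonian because $|gS|=|S|\ge 3$; this is where the size hypothesis enters that case.
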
   

\begin{proof}
As this proof is a modification of the proof of Theorem~\ref{arcDinf}, we continue to use the notations of that proof here.	
We again assume that~$S_1 \neq \emptyset$.
Otherwise~$S\cap\langle a \rangle$ is empty.
Since $\langle a\rangle\cong \Z$, we are able to apply Theorem \cite[Theorem 3.1.3]{Cayley_HC_M_T} and conclude that the any Cayley graph of~$\langle a\rangle$ with the generating set of size greater than 2 is Hamiltonian.
We note that $|S|>2$ and so for every $g\in S$ we are able to infer that the Cayley graph of~$\langle a\rangle$ with respect to $gS$ is Hamiltonian.
In this case using Theorem~\ref{index2 hamilton} finishes the proof.
Thus we suppose that~$S_1$ is not empty.
Since~$|S|\geq 3$, each of~$P_j$ has length at least one.
Now we define new paths
$$P_j':=p_j[x_1,\ldots,x_{2i-2},s_{j+1}^{-1},x_{2i-2},\ldots,x_1,s_{j+2}],$$
where~$p_j:=s_1\cdots s_{j}$ whenever~$j>0$ and~$p_0=1$ and~$p_j=s_{-1}\cdots s_j$ whenever~$j<0$ and put 
$$\Rcal _1:=\bigcup_{j\in \Z}P'_{2j} \textnormal{ and } {\Rcal_2:=[\ldots,s_{-2}s_{-1}]x_{2i-1}[s_1,s_2,\ldots]}.$$
Now~$\Rcal_1\sqcup \Rcal_2$ is a Hamilton circle.
\end{proof}
%
%
The following lemma is a useful tool for finding Hamilton circles as it reduces the task from a global condition, like an infinite circle,  to something more local like finite cycles and matchings.

\begin{lemma}\textnormal{\cite[Lemma 3.2.2]{Cayley_HC_M_T}}
	\label{Cylinder}
	Let~$\Gamma$ be a graph that admits a partition of its vertex set into finite sets~$X_i, ~i \in \bbZ$, fulfilling the following conditions:
	
	\begin{enumerate}[\rm (i)]
		\item~$\Gamma[X_i]$ contains a Hamilton cycle~$C_i$ or~$\Gamma[X_i]$ is isomorphic to~$K_2$. 
		\item For each~${i \in \bbZ}$ there is a perfect matching between~$X_i$ and~$X_{i+1}$. 
		\item There is a~$k \in \bbN$ such that for all~$i,j \in \bbZ$ with~${|i -j| \geq k}$ there is no edge in~$\Gamma$ between~$X_i$ and~$X_{j}$.
	\end{enumerate}
	Then~$\Gamma$ has a Hamilton circle.\qed
\end{lemma}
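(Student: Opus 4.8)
The plan is to reduce the statement, via Lemma~\ref{What is HC}, to producing two disjoint double rays $R_1,R_2$ with $R_1\cup R_2=V(\Gamma)$ such that each $R_i$ has a tail in each of the two ends of $\Gamma$. First I would record the easy structural facts. By condition~(ii) all the sets $X_i$ have a common size $n$, and by~(i) and~(ii) the graph $\Gamma$ is connected. By condition~(iii), deleting the finite set $B_N:=\bigcup_{|i|\le N}X_i$ for any $N\ge k$ leaves exactly two components, $\bigcup_{i>N}X_i$ and $\bigcup_{i<-N}X_i$, both infinite; hence $\Gamma$ is two-ended, with one end $\omega^+$ lying "to the right'' and one $\omega^-$ "to the left''. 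I also fix, for each $i$, the perfect matching $M_i$ between $X_i$ and $X_{i+1}$ from~(ii). Throughout, the only edges of $\Gamma$ that get used are the edges inside each $X_i$ coming from $C_i$ (or the single edge of $K_2$) together with the matching edges in the $M_i$.

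The core is a recursion on $N$ producing vertex-disjoint paths $P^1_N,P^2_N$ in $\Gamma[B_N]$ that together cover $B_N$, such that each $P^r_N$ has one endpoint in $X_{-N}$ and one in $X_N$, and $P^r_N\subseteq P^r_{N+1}$. For $N=0$: if $\Gamma[X_0]$ contains a cycle $C_0$, delete two distinct edges of $C_0$ to split it into two arcs, taken as $P^1_0,P^2_0$; if $\Gamma[X_0]\cong K_2$, take $P^1_0,P^2_0$ to be the two one-vertex paths. For the step $N\to N+1$: let $b^r$ be the endpoint of $P^r_N$ in $X_N$ and $b^r{}'\in X_{N+1}$ its partner under $M_N$; since $b^1\ne b^2$ we get $b^1{}'\ne b^2{}'$. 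I then split $\Gamma[X_{N+1}]$ into two vertex-disjoint spanning paths $Q^1,Q^2$ with $b^1{}'$ an endpoint of $Q^1$ and $b^2{}'$ an endpoint of $Q^2$: when $\Gamma[X_{N+1}]\cong K_2$ these are the two one-vertex paths, and when $\Gamma[X_{N+1}]$ contains the $n$-cycle $C_{N+1}$ (so $n\ge3$) one uses the elementary observation that for any two distinct vertices $x,y$ of an $n$-cycle the two edges entering $x$ and entering $y$ (for a fixed cyclic orientation) are distinct and their removal makes $x$ and $y$ the endpoints of the two resulting arcs. Appending the matching edge $b^rb^r{}'$ and then $Q^r$ to $P^r_N$, and doing the symmetric thing at the other end with $X_{-(N+1)}$, yields $P^1_{N+1},P^2_{N+1}$; disjointness and the spanning property follow from the corresponding properties at stage $N$ together with $Q^1\cap Q^2=\emptyset$ and $b^1{}'\ne b^2{}'$.

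Finally I would pass to the limit, setting $R_r:=\bigcup_N P^r_N$. Each vertex lies in some $X_i$ and is an interior vertex of $P^r_N$ for all $N>|i|$, so each $R_r$ is $2$-regular; $R_r$ contains no finite cycle, since such a cycle would lie in some $\Gamma[B_N]$ and hence in $P^1_N\cup P^2_N$, contradicting that these are paths; therefore $R_r$ is a double ray. The $R_1,R_2$ are disjoint with $R_1\cup R_2=V(\Gamma)$, and since $P^r_N$ meets $X_N$ and $X_{-N}$ for every $N$, each $R_r$ has a tail in $\omega^+$ and a tail in $\omega^-$. Lemma~\ref{What is HC} then produces the Hamilton circle.

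The only step where anything could go wrong is the one requiring a split of $\Gamma[X_{N+1}]$ into two spanning paths with prescribed endpoints on the two pieces: because the matchings $M_i$ need not be "aligned'' with the cycles $C_i$, we have no control over where those endpoints sit on $C_{N+1}$, and a more naive serpentine crossing each matching only once would break down precisely here. What dissolves this obstacle is that the demand is merely to separate two distinct vertices of a single cycle, which is always possible by removing two edges. Everything else — finiteness of the $X_i$, two-endedness from~(iii), and that a tail in an end of $\Gamma$ is exactly what Lemma~\ref{What is HC} asks for — is routine, and I would treat it briefly.
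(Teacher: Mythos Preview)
The paper does not supply its own proof of this lemma: it is quoted from \cite{Cayley_HC_M_T} and closed with a \qed, so there is nothing to compare your argument against. Your proof is correct and is the natural one --- condition~(iii) forces $\Gamma$ to be two-ended, the recursive extension of the two spanning paths through each new slab $X_{\pm(N+1)}$ works because any two distinct vertices of a cycle are endpoints of a two-path vertex partition of that cycle (and the $K_2$ case is trivial), and the limiting disjoint double rays meet the hypotheses of Lemma~\ref{What is HC}. The only cosmetic point is at $N=0$, where both endpoints of each $P^r_0$ lie in $X_0$; you should say which endpoint serves for the extension to $X_1$ and which for $X_{-1}$ (when $P^r_0$ is a single vertex they coincide, which is harmless).
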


We now give two lemmas which show that we can find normal subgroups in certain free-products with amalgamations or HNN-extensions. 

\begin{lemma}
	\label{free prod normal}
	Let~$G = G_1 \ast_H G_2$ be a finitely generated 2-ended group.
	 Then~$H$ is normal in~$G$.  
\end{lemma}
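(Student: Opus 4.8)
The plan is to exploit the structure theorem for two-ended groups (Theorem~\ref{stallings}) applied to $G = G_1 \ast_H G_2$. Since $G$ is finitely generated and two-ended, part~(iii) of that theorem tells us that either $G = A \ast_C B$ with $C$ finite and $[A:C]=[B:C]=2$, or $G = C\ast_C$ (an HNN-extension) with $C$ finite; in either case, by part~(iv), $G$ has a \emph{finite} normal subgroup $N$ with $G/N \cong \Z$ or $G/N \cong D_\infty$. The first thing I would establish is that $H$ must be finite: this follows from standard Bass--Serre / Kurosh-type considerations, since an infinite amalgamated subgroup in a two-ended group would force one of the factors to have infinitely many ends relative to $H$, contradicting two-endedness; alternatively, the action of $G$ on the Bass--Serre tree of the splitting $G_1\ast_H G_2$ has $H$ as an edge stabiliser, and a two-ended group acting on a tree with finite edge stabilisers and no global fixed point must have finite edge stabilisers. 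So $H$ is finite.

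Next I would pass to the core. Since $[G:G_i]$ need not be finite, I cannot directly bound the index of $H$, so instead I argue as follows. Consider $K \defi \mathsf{Core}_G(H) = \bigcap_{g\in G} H^g$, which is normal in $G$ and contained in $H$, hence finite. The quotient $G/K$ is again two-ended (quotient by a finite normal subgroup), and $G/K$ acts on the Bass--Serre tree $T$ of the splitting with trivial-core edge stabiliser $H/K$. The key step is to show $H = K$, i.e.\ that $H$ is already normal. For this I would use that $G/K$, being two-ended, has a finite-index infinite cyclic subgroup $\langle z \rangle$ (Theorem~\ref{stallings}(ii)); the element $z$ (or a power $z^m$ lying in $G$ and acting hyperbolically on $T$) translates along the axis in $T$, and conjugation by $z$ permutes the edge stabilisers along that axis. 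Since all edge stabilisers of the splitting $G_1\ast_H G_2$ are conjugates of $H$ and $H$ is finite, and since $z$ has infinite order translating along a line of edges, comparing $H$ with $H^z$ via the tree structure forces $H^z = H$ (the two finite groups stabilise adjacent edges of the axis and, by the structure of the splitting with $[A:C]=[B:C]=2$ in case~(iii), the stabilisers of all edges on the axis coincide). Together with the fact that $G$ is generated by $G_1 \cup G_2$ and $H \trianglelefteq G_i$ would be checked separately (or follows since $H$ is normal in each vertex group as an amalgamated factor only when... — here one uses that $H$ has index $2$ in each factor in case (iii), hence is normal in each $G_i$), normality of $H$ in all of $G$ follows.

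The cleanest route, which I would actually write up, is: reduce to $H$ finite; then observe $[G_i : H] = 2$ for $i=1,2$ by matching the given splitting $G_1\ast_H G_2$ against Theorem~\ref{stallings}(iii) (a finitely generated group is two-ended and splits over $H$ as a free product with amalgamation precisely in the form $A\ast_C B$ with $[A:C]=[B:C]=2$, so $G_i = A$ resp.\ $B$ and $H = C$ up to the isomorphism); since a subgroup of index $2$ is always normal, $H \trianglelefteq G_1$ and $H \trianglelefteq G_2$; finally, $G = \langle G_1, G_2\rangle$ and $H$ is normalised by each generator coming from $G_1$ and from $G_2$, hence $H \trianglelefteq G$. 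The main obstacle is the identification step: justifying rigorously that the \emph{given} decomposition $G_1\ast_H G_2$ must be of the special shape in Theorem~\ref{stallings}(iii) rather than merely admitting \emph{some} such decomposition. This requires an accessibility / uniqueness-of-splitting argument (two-ended groups have an essentially unique splitting over a finite subgroup, and any splitting of a two-ended group over a finite subgroup has the stated index-$2$ form), which I would cite from the Bass--Serre theory literature or from~\cite{two-ended} rather than reprove.
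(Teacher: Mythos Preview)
Your ``cleanest route'' in the final paragraph is essentially the paper's argument. The paper likewise starts by asserting $[G_i:H]=2$ as a consequence of two-endedness (without further justification---so the identification concern you flag is equally present there), writes $G_1 = H \sqcup Hg_1$ and $G_2 = H \sqcup Hg_2$, and then, instead of your one-line normalizer observation $N_G(H) \supseteq \langle G_1, G_2\rangle = G$, carries out an explicit normal-form computation: each $g \in G$ is expressed as $h(g_1g_2)^\ell x$ (or the symmetric form), and one verifies $gfg^{-1} \in H$ by iterating $g_2 f = f' g_2$, $g_1 f' = f'' g_1$ with $f',f'' \in H$, i.e.\ by repeatedly using that $H$ has index $2$ (hence is normal) in each factor. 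Your normalizer formulation is cleaner and bypasses the induction on word length; the paper's explicit calculation gains nothing beyond that. Your first two paragraphs (passing to the core, the axis in the Bass--Serre tree, conjugation by a hyperbolic element) are unnecessary detours once the index-$2$ fact is in hand.
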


\begin{proof}
As~$G$ is two-ended we know that~$[G_i : H] =2$ for~$i \in \{1,2\}$.
Let~$g \in G$ be any element and~$G_1=H\sqcup Hg_1$ and~$G_2=H\sqcup Hg_2$
We benefit from the normal form presentation of each element, see  \cite[Theorem 11.3]{bogo} and we write~${g = h (g_1g_2)^\ell x}$ or~${g = h (g_2g_1)^\ell y}$ for some~$\ell \in \bbN\cup\{0\}$ and~$x \in \{1, g_1\}$. 
	Let~$f \in H$ be an arbitrary element. 
	We have to show that~$g f g^{-1}\in H$.
	This is equivalent to~$gf = \hat{h} g$ for some~$\hat{h} \in H$. 
    Let us assume that~$g=h (g_1g_2)^\ell$ where $\ell$ is the minimal number with this property.
	In other words for $\ell'<\ell$, we have $h(g_1g_2)^{\ell'}f(g_1g_2)^{-\ell'}\in H$.
	Since~$H$ is a normal subgroup in~$G_1$ and~$G_2$, we conclude that~$g_2f=f'g_2$ and~$g_1f'=f''g_1$, where~$f',f''\in H$.
	We conclude the following: 
	\begin{align*}
	gf &= h (g_1g_2)^\ell  f  \\
	&=  h (g_1g_2)^{\ell-1} g_1 f^\prime g_2 \\
	&= h (g_1g_2)^{\ell-1} f^{\prime \prime} g_1g_2 \\
   &= h \bar{f} (g_1g_2)^{\ell-1}g_1g_2 \mbox{ for some } \bar{f} \in H \\
	&= \bar{h} (g_1g_2)^{\ell} \mbox{ for some } \bar{h} \in H
	\end{align*}
	The other case is analogous to this case.
	This concludes the proof.
\end{proof}

\begin{lemma}
	\label{HNN normal}
	Suppose that~$G$ is a two-ended group which splits over a subgroup~$H=\langle S\mid R\rangle$ as an HNN-extension. 
	i.e. $${G  =\langle S,t\mid R,tht^{-1}=\phi(h)\text{ for every }h\in H\rangle},$$ where~${\phi\in \mathsf{Aut}(H)}$.
	Then~$H$ is normal in~$G$. 
\end{lemma}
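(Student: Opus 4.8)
The plan is to use the defining feature of this particular HNN-extension: it is formed over the \emph{whole} base group $H$, so the stable letter $t$ conjugates $H$ onto $H$ (not merely into a proper subgroup), and normality then drops out immediately. First I would record that $G$ is generated by $S\cup\{t,t^{-1}\}$, where $\langle S\rangle=H$; consequently, to prove $H\unlhd G$ it suffices to show that each of these generators lies in the normalizer $N_G(H)$.

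For a generator $s\in S$ this is automatic, since $s\in H$ and hence $sHs^{-1}=H$. For the stable letter I would invoke the relations $tht^{-1}=\phi(h)$ for all $h\in H$. Because $\phi\in\mathsf{Aut}(H)$, the assignment $h\mapsto\phi(h)$ is a bijection of $H$ onto itself, so $tHt^{-1}=\phi(H)=H$; substituting $\phi^{-1}(h)$ for $h$ in the defining relations yields $t^{-1}ht=\phi^{-1}(h)$, whence $t^{-1}Ht=\phi^{-1}(H)=H$ as well. Hence $t$ and $t^{-1}$ normalize $H$, and since every generator of $G$ normalizes $H$ we conclude $N_G(H)=G$, i.e.\ $H$ is normal in $G$.

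I do not expect a genuine obstacle here; the entire content of the lemma is the bookkeeping observation that an HNN-extension taken over all of $H$ forces the stable letter to normalize $H$ (in contrast with the amalgamated-product case of Lemma~\ref{free prod normal}, where two-endedness is really used to force $[G_i:H]=2$). The one point worth a line of comment is that $H$ genuinely embeds in $G$ as the claimed subgroup, so that the statement is not vacuous: this is Britton's lemma for HNN-extensions, and in the two-ended situation it is in any case built into the structure theorem (Theorem~\ref{stallings}~(iii)), where such a $G$ occurs precisely in the form $C\free_C$ with $C=H$ finite. Beyond that remark, the proof is the three lines above.
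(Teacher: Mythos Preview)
Your proof is correct and rests on the same observation as the paper's: since $\phi\in\mathsf{Aut}(H)$, the stable letter $t$ (and hence $t^{-1}$) normalizes $H$. The paper packages this slightly differently---it first shows $t^mHt^{-m}\subseteq H$ for all $m\in\bbZ$ by induction, deduces that every $g\in G$ has the form $ht^m$, and then checks $gfg^{-1}\in H$ by direct computation---whereas you invoke the fact that $N_G(H)$ is a subgroup containing every generator of $G$; this is cleaner and avoids the normal-form detour, but the mathematical content is the same.
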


\begin{proof}
	Let~$g \in G$.
	We have to show that~$g h g^{-1}\in H$ for~$g \in G$ and~${h \in H}$. 
	By our presentation of~$G$ we know that~${g= h_1 t^{i_1} \cdots h_nt^{i_n}}$.
	From~$tht^{-1} = \phi(h)=h'$ we conclude the following: 
	\begin{align*}
     t^2 h t^{-2} &= t h' t^{-1} \\
	&=  \phi(h')  \\
	&= h'' \mbox{ where } h'' \in H \\
	\Rightarrow t^2 k &= k^{\ell^2} t^2 
	\end{align*}
	By induction we obtain~$t^m ht^{-m}\in H$ for~$m \in \bbN$ and we can extend this by replacing~$t$ with~$t^{-1}$ to all~$m \in \bbZ$.  
	This implies that we have a presentation for each~${g \in G}$ as~${g= h t^m}$ for some~${m \in \bbZ}$ and~$h\in H$.
	Let~${f \in H}$ be given. 
	We conclude that
	\begin{align*}
	g f &= ht^mf = hf't^m \mbox{ for some } f'\in H \\
	&= f''ht^m  \mbox{ for some } f^{\prime \prime} \in H
	\end{align*}
	This finishes the proof.
\end{proof}

\begin{thm}
\label{Z_p2}
Let~$G=\langle S\rangle$ be a two-ended group which splits over ~$\Z_p$ such that~$S\cap \Z_p\neq \emptyset$, where~$p$ is a prime number.
Then~$\Gamma(G,S)$ has a Hamilton circle.
\end{thm}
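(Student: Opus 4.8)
The plan is to apply the structure theorem (Theorem~\ref{stallings}) to reduce the statement to the two cases $G = A \ast_{\Z_p} B$ with $[A:\Z_p] = [B:\Z_p] = 2$, and $G = \Z_p \ast_{\Z_p}$ an HNN-extension. In either case, by Lemma~\ref{free prod normal} or Lemma~\ref{HNN normal}, the amalgamating subgroup $C \defi \Z_p$ is normal in $G$, and $G/C$ is isomorphic to $D_\infty$ (in the amalgam case) or to $\Z$ (in the HNN case). So I get a short exact sequence $1 \to \Z_p \to G \to Q \to 1$ with $Q \in \{\Z, D_\infty\}$ and $|C| = p$ finite.

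First I would set up the partition needed to invoke Lemma~\ref{Cylinder}. Pick the generator $c \in S \cap \Z_p$ (which exists by hypothesis), so $\langle c \rangle = \Z_p = C$ and $\Gamma(G,S)$ restricted to each left coset $gC$ contains the cycle $g[c]^p$ (a Hamilton cycle of $\Gamma[gC]$ when $p \geq 3$, or a $K_2$ when $p = 2$). The cosets of $C$ are indexed by $Q$; using a Hamilton double ray of $\Gamma(Q, \overline S)$ — which exists by Lemma~\ref{arcZ} when $Q \cong \Z$ and by Lemma~\ref{arcDinf} when $Q \cong D_\infty$ — I would enumerate the cosets as $X_i$, $i \in \Z$, so that consecutive cosets $X_i, X_{i+1}$ are joined by an edge of $\Gamma(G,S)$ coming from the corresponding generator of $\overline S$. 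The point of using a Hamilton double ray of the quotient is precisely to linearly order the blocks $X_i$ so that condition (iii) of Lemma~\ref{Cylinder} holds: since $S$ is finite and each generator shifts the $Q$-coordinate by a bounded amount along the double ray, there is a uniform bound $k$ beyond which no edges run between $X_i$ and $X_j$.

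The remaining work is to verify condition (ii): that between $X_i$ and $X_{i+1}$ there is a \emph{perfect} matching. This is where the normality of $C$ is essential. If $\overline s$ is the generator of $\overline S$ realizing the step from $X_i = gC$ to $X_{i+1} = gsC$, then right-multiplication by $s$ maps $gC$ into $gsC$, and the edges labelled $s$ from $gC$ land on $\{gcs : c \in C\} = \{g s (s^{-1}cs) : c \in C\} = gsC$ since $C \trianglelefteq G$ — so all $p$ edges labelled $s$ leaving $X_i$ form a perfect matching onto $X_{i+1}$. (A small check: when $Q \cong D_\infty$ some steps of the Hamilton double ray of the quotient use involutions $a^j b$; I should make sure the corresponding generator in $S$ still gives a perfect matching, which again follows from $C \trianglelefteq G$.) Then Lemma~\ref{Cylinder} yields a Hamilton circle of $\Gamma(G,S)$.

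The main obstacle I anticipate is the bookkeeping in the $D_\infty$ case: one must choose the Hamilton double ray of $\Gamma(G/C, \overline S)$ (produced by Lemma~\ref{arcDinf}) and then lift each of its edges to an actual edge of $\Gamma(G,S)$ with a consistent generator label, checking that the induced blocks $X_i$ genuinely satisfy all three hypotheses of Lemma~\ref{Cylinder} — in particular that distinct steps of the quotient double ray correspond to distinct cosets so the $X_i$ really partition $V(\Gamma(G,S))$. Also one must handle the degenerate possibility $p = 2$ (where $\Gamma[X_i] \cong K_2$ rather than carrying a cycle) separately, but Lemma~\ref{Cylinder} is stated to allow exactly this, so it costs nothing. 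Everything else — normality, finiteness of $C$, the matching count — is immediate from the lemmas already established.
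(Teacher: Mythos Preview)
Your proposal is correct and follows the same overall strategy as the paper: apply Theorem~\ref{stallings}, use Lemma~\ref{free prod normal} or Lemma~\ref{HNN normal} to get $\Z_p$ normal in $G$ with $G/\Z_p$ isomorphic to $\Z$ or $D_\infty$, produce a Hamilton double ray there via Lemma~\ref{arcZ} or Lemma~\ref{arcDinf}, and feed the resulting coset partition into Lemma~\ref{Cylinder}.

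The one noteworthy difference is where the Hamilton double ray lives. You work in the quotient $\Gamma(G/\Z_p,\overline{S})$ and lift; the paper instead sets $S'\defi S\setminus\{k^{\pm}\}$, proves that $H\defi\langle S'\rangle$ is a \emph{complement} to $\Z_p$ in $G$ (so $H\cong G/\Z_p$), and finds the double ray inside the genuine subgraph $\Gamma(H,S')$ of $\Gamma(G,S)$. The paper's route buys that the transversal consists of actual elements of $G$ lying on a double ray in $\Gamma(G,S)$, so the matchings of Lemma~\ref{Cylinder} are visibly given by the edges labelled $k$; on the other hand it needs the extra step $H\cap\Z_p=1$, for which the paper tacitly assumes $S$ is minimal. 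Your quotient approach avoids that assumption and handles the perfect matchings via normality exactly as you wrote, so it is marginally cleaner.
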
   

\begin{proof}
First we notice that~$S$ and~$\Z_p$ meet in exactly one element and its inverse, say~${S\cap\Z_p=\{k,k^{-1}\}}$.
Otherwise~$S$ is not minimal.
By Theorem~\ref{stallings} we already know that~$G$ is isomorphic to~${G_1\free_{\Z_p} G_2}$ or an HNN-extension of the subgroup~$\Z_p$, where~$|G_1|=|G_2|=2p$.
First assume that~$G\cong G_1\free_{\Z_p} G_2$, where~$G_i$ is a finite group such that~$[G_i:\Z_p]=2$ for~$i=1,2$.
It follows from Lemma \ref{free prod normal} that~$\Z_p$ is a normal subgroup of~$G$, and we deduce that~$G/\Z_p\cong \Z_2\ast\Z_2$ which is isomorphic to~$\Dinf$.
We set~$S':=S\setminus \{k,k^{-1}\}$ and now the subgroup generated by~$S'$ has only trivial intersection with~$\Z_p$.
Otherwise~$\bbZ_p \ni x \in \langle S^\prime \rangle$ yields that~$k \in \langle S^\prime \rangle$, which cannot happen as~$S$ was minimal.
We denote this subgroup by~$H$.
Note that~$H\Z_p=G$ because~$\bbZ_p$ is normal.\footnote{To illustrate: Consider the generating sets.  Because~$\langle {k } \rangle$ is normal in~$G$ we can conclude that~$G= \langle S \rangle = \langle S^\prime  \rangle \langle k \rangle$.}
So we can conclude that~$H$ is isomorphic to~$\Dinf \cong \bbZ_2 \ast \bbZ_2$ as:
$$\bbZ_2 \ast \bbZ_2 \cong (G_1 \free_{\bbZ_p} G_2 )/ \bbZ_p= G / \bbZ_p = (H \bbZ_P ) / \bbZ_p \cong H / (H \cap \bbZ_p) =H$$
It follows from  Lemma~\ref{arcDinf} that there exists the following Hamilton double ray~$\Rcal$ in~$H$:
$$[ \ldots , s_{-2},s_{-1}]1[s_1,s_2, \ldots ],$$
with~$s_i\in S'$.
We notice that~$\Rcal$ gives a transversal of the subgroup~$\Z_p$.
It is important to note that~$\Z_p=\langle k\rangle$ is a normal subgroup of~$G$.
Set~$x_i \defi \Pi_{j=1}^i  s_j$ for~$i \geq 1$ and~$x_i \defi \Pi_{j=1}^is_{-j}$ for~$i \leq -1$. 
There is a perfect matching between two consecutive cosets~$\Z_px_i$ and~$\Z_px_{i+1}$.
We now are ready to apply Lemma~\ref{Cylinder} to obtain a Hamilton circle.

Now assume that~$G$ is an HNN-extension which splits over~$\Z_p$.
We recall that~$G$ can be represented by~${\langle k,t\mid k^p=1,t^{-1}kt=\phi(k)\rangle}$, with~${\phi\in \mathsf{Aut}(\Z_p)}$.
Since~$\Z_p$ is a normal subgroup(see Lemma \ref{HNN normal}), we conclude that~$G=\Z_p\langle t\rangle$.
Again set~${S':=S\setminus\{k,k^{-1}\}}$ and~${H \defi \langle S^\prime \rangle}$.
$$ \langle S^\prime \rangle =  H = H / (H \cap \bbZ_p) \cong \bbZ_p H / \bbZ_p = G / \bbZ_p = \bbZ_p \langle t \rangle / \bbZ_p \cong \langle t \rangle.$$  
Hence we deduce that~$S':=S\setminus\{k,k^{-1}\}$ generates~$\langle t\rangle$.
It follows from Lemma \ref{arcZ} that~$\Gamma(\langle t\rangle, S')$ contains a Hamilton double ray.
By the same argument as in the other case we can find the necessary cycles and the matchings between them to use Lemma~\ref{Cylinder} to find the desired Hamilton circle.
\end{proof}   
In the following theorem we are able to drop the condition of~$S \cap H \neq \emptyset$ if~$p=2$.

\begin{thm}
	\label{split over 2group}
	Let~$G$ be a two-ended group which splits over~$\Z_2$.
	Then any Cayley graph of~$G$ is Hamiltonian.
\end{thm}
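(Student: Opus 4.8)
The plan is to mimic the structure of the proof of Theorem~\ref{Z_p2}, but to exploit the fact that $p=2$ so that we no longer need $S$ to meet the amalgamated subgroup $\Z_2$. Write $\Z_2 = \langle k \rangle$ with $k^2 = 1$. By Theorem~\ref{stallings}, either $G \cong G_1 \free_{\Z_2} G_2$ with $|G_1| = |G_2| = 4$, or $G$ is an HNN-extension $\Z_2 \free_{\Z_2}$. In both cases Lemma~\ref{free prod normal} (respectively Lemma~\ref{HNN normal}) tells us $\Z_2 \unlhd G$, and the quotient $\overline{G} = G/\Z_2$ is either $\Z_2 \ast \Z_2 \cong \Dinf$ (amalgam case) or $\Z$ (HNN case). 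We split into two cases according to whether $S \cap \Z_2 = \emptyset$ or not.

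If $S \cap \Z_2 \neq \emptyset$, then $S \cap \Z_2 = \{k\}$ (note $k = k^{-1}$, so this is a single generator, which is where $p=2$ genuinely helps), and the argument of Theorem~\ref{Z_p2} goes through verbatim: set $S' := S \setminus \{k\}$, let $H := \langle S' \rangle$, and check as there that $H \cap \Z_2 = 1$ and $H\Z_2 = G$, so $H \cong \overline{G}$, which is either $\Dinf$ or $\Z$. By Lemma~\ref{arcDinf} or Lemma~\ref{arcZ}, $\Gamma(H, S')$ contains a Hamilton double ray $\Rcal = [\ldots, s_{-1}]1[s_1, \ldots]$. This double ray is a transversal of $\Z_2$ in $G$; setting $x_i$ to be the corresponding partial products, the cosets $\Z_2 x_i$ are the sets $X_i$ of Lemma~\ref{Cylinder}. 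Each $\Gamma[X_i] \cong K_2$ (as $X_i$ is a coset of $\langle k \rangle$ joined by the edge $k$), the edge $s_{i+1}$ together with $k$-translation gives a perfect matching between $X_i$ and $X_{i+1}$, and the bound (iii) of Lemma~\ref{Cylinder} holds because $S$ is finite and the double ray is locally finite. So Lemma~\ref{Cylinder} yields a Hamilton circle.

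The genuinely new case is $S \cap \Z_2 = \emptyset$. Here $H := \langle S \rangle = G$, so the previous reduction collapses; instead I would pass to the quotient directly. Since $\Z_2 \unlhd G$ and $S \cap \Z_2 = \emptyset$, the natural map $S \to \overline{S}$ is a bijection onto a generating set of $\overline{G}$, and $\Gamma(G,S)$ is a "$\Z_2$-cover" of $\Gamma(\overline{G}, \overline{S})$: above each vertex $\overline{g}$ sit the two vertices $g, gk$, and above each $\overline{S}$-edge sit two $S$-edges forming a matching between the fibres. Now $\overline{G}$ is $\Dinf$ or $\Z$, and by Lemma~\ref{arcDinf}/\ref{arcZ} its Cayley graph $\Gamma(\overline{G},\overline{S})$ has a Hamilton double ray $\overline{\Rcal} = [\ldots, \bar s_{-1}]\bar 1[\bar s_1, \ldots]$. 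Lift $\overline{\Rcal}$ to $\Gamma(G,S)$: starting at $1$, follow the lifts of $\bar s_1, \bar s_2, \ldots$ and of $\bar s_{-1}, \bar s_{-2}, \ldots$; this produces a double ray $\Rcal_1$ in $\Gamma(G,S)$ covering exactly the vertices $\{x_i\}$ (one per fibre, where $x_i$ are the partial products), and its $k$-translate $\Rcal_2 := \Rcal_1 k$ covers the vertices $\{x_i k\}$, i.e. the rest. The two double rays are disjoint, their union is all of $V(\Gamma(G,S))$, and since $\Z_2$ is finite, $\Gamma(G,S)$ has the same number of ends as $\overline{G}$ — one end if $\overline{G} \cong \Z$, two if $\overline{G} \cong \Dinf$ — with each $\Rcal_i$ having a tail in every end (for the $\Z$ case this needs a small separate argument analogous to the end of the proof of Lemma~\ref{arcZ}; for the $\Dinf$ case it follows as in Theorem~\ref{index2 hamilton}). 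Then Lemma~\ref{What is HC} (two-ended case) or Lemma~\ref{What is HC Inf}/direct inspection (one-ended case) gives that $\Rcal_1 \sqcup \Rcal_2$ is a Hamilton circle.

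The main obstacle I anticipate is the bookkeeping in the $S \cap \Z_2 = \emptyset$ case: one must be careful that the lift of $\overline{\Rcal}$ is genuinely a double ray (no accidental coincidences) and that the $k$-translate is disjoint from it — this is where normality of $\Z_2$ and the fact that $k \notin \langle$ any proper-looking product $\rangle$ is used — and then that the tails of both lifts land in the correct ends. A cleaner alternative would be to reuse Theorem~\ref{index2 hamilton} directly: $\overline{G}$ is $\Z$ or $\Dinf$, both of whose Cayley graphs are Hamiltonian by the lemmas above, so if one can realize $G$ as a two-ended group with an index-$2$ subisomorphic to $\overline{G}$ and with generating set disjoint from it, one is done immediately; but $G$ need not contain a copy of $\overline{G}$ as a subgroup, so this shortcut may fail and the explicit covering argument is the safe route.
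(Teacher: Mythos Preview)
Your approach is essentially the paper's own: reduce the case $S\cap\Z_2\neq\emptyset$ to Theorem~\ref{Z_p2}, and in the case $S\cap\Z_2=\emptyset$ lift a Hamilton double ray of $\Gamma(\overline{G},\overline{S})$ (found via Lemma~\ref{arcZ} or~\ref{arcDinf}) to a double ray $\Rcal$ in $\Gamma(G,S)$, then take its $k$-translate $\Rcal'$ to get the second double ray, and conclude that $\Rcal\sqcup\Rcal'$ is a Hamilton circle. The paper writes the translate as $[\ldots,s_{-1}]k[s_1,\ldots]$ (left translate), which coincides with your right translate since $k$ is automatically central ($\Z_2\unlhd G$ forces $gkg^{-1}=k$); you may want to make that explicit. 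One genuine slip: you write that $\overline{G}\cong\Z$ has one end and $\Dinf$ has two --- in fact both are two-ended (and $G$ is two-ended by hypothesis anyway), so your ``one-ended case'' never arises and Lemma~\ref{What is HC} handles everything uniformly.
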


\begin{proof}
Suppose that~$G = \langle S \rangle$.
If~$S$ meets~$\Z_2=\{1,k\}$, then by Theorem \ref{Z_p2} we are done.
So we can assume that~$S$ does not intersect~$\Z_2$.
It follows from Lemma \ref{free prod normal} and Lemma \ref{HNN normal}~$\Z_2$ is a normal subgroup and we deduce from Theorem \ref{stallings} that~$\overline{G}=G/\Z_2$ is isomorphic to~$\Z$ or~$D_{\infty}$.
In either case we can find a Hamilton double ray in~${\Gamma(\overline{G},\overline{S})}$ by either  Lemma~\ref{arcZ} or  Lemma~\ref{arcDinf}, say
$$\overline{\Rcal}= [\ldots, \overline{s_{-1}}]1[\overline{s_1},\ldots].$$
This double ray induces a double ray in~$\Gamma(G,S)$, say
$${\Rcal}= [\ldots, s_{-1}]1[s_1,\ldots].
$$
We notice that~$\Rcal$ meets every coset of~$\Z_2$ in~$G$ exactly once.
We now define the following double ray 
$$\Rcal':=[\ldots,s_{-1}]k[s_1,\ldots].$$
It is important to note that~$\Rcal$ and~$\Rcal'$ do not intersect each other.
Otherwise there would be a vertex adjacent to two different edges with the same label and this yields a contradiction.
Now it is not hard to see that~$\Rcal \sqcup \Rcal'$ forms a Hamilton circle.
\end{proof}	

\begin{remark}
	The assumption that~$G$ is two-ended is necessary and it cannot be extended to multi-ended groups.
	For instance, consider~$G=\Z_6\ast_{\Z_2}\Z_6$.
	It is proved in {\rm\cite{Cayley_HC_M_T}} that there is a generating set~$S$ of~$G$ with~$|S|=3$ such that~$\Gamma(G,S)$ is not Hamiltonian.
\end{remark}

\section{Generalization of Rapaport Strasser }
\label{Section_rap}

In this section we take a look at the following famous theorem about Hamilton cycles of Cayley graphs of finite groups which is known as Rapaport Strasser's Theorem and generalize the 2-connected case to infinite groups in Theorem~\ref{inf_rap_k2}. 

\begin{thm}{\rm\cite{rap}}\label{Rapaport-Strasser}
	Let~$G$ be a finite group, generated by three involutions~$a,b,c$ such that~$ab=ba$.
	Then the Cayley graph~$\Gamma(G, \{a,b,c\})$ is Hamiltonian.
\end{thm}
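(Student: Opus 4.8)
Since $G$ is finite, \emph{Hamiltonian} here means that $\Gamma := \Gamma(G,\{a,b,c\})$ carries a Hamilton cycle, and the plan is to build one by ``merging'' a family of pairwise disjoint cycles along a family of $4$-cycles, in the spirit of Theorem~\ref{Rapaport} but in a finite, fully constructive form. First I would dispose of the degenerate configurations. If $\langle b,c\rangle = G$ then $G$ is dihedral, its Cayley graph on $\{b,c\}$ is a single spanning cycle, and this cycle is already a Hamilton cycle of $\Gamma$; likewise if two of $a,b,c$ coincide, or one lies in the subgroup generated by the other two, then $G$ is dihedral or a Klein four-group and a Hamilton cycle is immediate. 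Hence I may assume that $a,b,c$ are distinct, that $N := \langle a,b\rangle \cong \Z_2 \times \Z_2$ has order $4$, and that $H := \langle b,c\rangle \neq G$, equivalently $a \notin H$.

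Two partitions of $V(\Gamma)=G$ into induced subgraphs drive the argument. Because $a$ and $b$ are commuting involutions, each left coset $gN = \{g, ga, gab, gb\}$ spans the $4$-cycle $g, ga, gab, gb, g$ on its $a$- and $b$-edges (using $ab=ba$); call these the \emph{squares}. Because $b$ and $c$ are involutions, each left coset $gH$ is dihedral and its Cayley graph on $\{b,c\}$ is a single cycle; call these the \emph{$H$-cycles}. Note that the $a$-edges form a perfect matching of $\Gamma$, that every $b$-edge lies on exactly one square (the one containing its two endpoints), and that each $H$-cycle consists of alternating $b$- and $c$-edges.

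The central observation is that each square is a \emph{merge gadget} for two $H$-cycles. The square $gN$ has $b$-edges $\{g,gb\}$ and $\{ga,gab\}$ and $a$-edges $\{g,ga\}$ and $\{gb,gab\}$; its two $b$-edges lie on the $H$-cycles $gH$ and $gaH$, which are distinct precisely because $a \notin H$. Deleting the two $b$-edges and inserting the two $a$-edges of $gN$ therefore performs the standard merge of the two cycles $gH$ and $gaH$ into a single cycle. I would then form the quotient multigraph $Q$ whose vertices are the $H$-cycles and whose edges are the squares (each joining the two $H$-cycles it bridges); $Q$ is connected because $\Gamma$ is, so it has a spanning tree $T$. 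Processing the squares of $T$ in an order in which each one joins two currently distinct components, and performing the associated merge at each step, turns the disjoint union of all $H$-cycles into a single spanning cycle.

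It remains to see that the output is genuinely a Hamilton cycle. It is $2$-regular: every vertex retains its unique $c$-edge, and it trades its unique $b$-edge for one $a$-edge exactly when its square belongs to $T$, so its degree stays $2$ in either case. It is connected because, writing $r$ for the number of $H$-cycles, each of the $|T| = r-1$ merges reduces the number of cycle components by one, so we pass from $r$ components to a single one. The merges never conflict, since each $b$-edge belongs to a unique square and distinct squares of $T$ delete distinct $b$-edges and insert distinct $a$-edges. I expect the main obstacle to be precisely this bookkeeping in the merge step: showing that the sequence of merges composes into one cycle rather than several requires choosing the processing order of $T$ so that every merge unites two genuinely distinct current components, and it relies on the disjointness of the edge sets consumed by different squares; the degenerate cases, although routine, must also be checked carefully so that $N$ really is a Klein four-group and the two $H$-cycles flanking each square are distinct.
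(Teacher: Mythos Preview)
The paper does not prove Theorem~\ref{Rapaport-Strasser}; it is quoted from Rapaport Strasser~\cite{rap} as a known finite result and is only used as a black box inside the proof of Theorem~\ref{inf_rap_k2}. So there is no proof in the paper to compare against.

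That said, your argument is correct and is essentially the classical one. The two coset decompositions --- by $N=\langle a,b\rangle$ giving the $4$-cycles and by $H=\langle b,c\rangle$ giving the spanning family of disjoint cycles --- are precisely the data that Theorem~\ref{Rapaport} asks for, and your spanning-tree merge is the finite, constructive counterpart of that statement. The bookkeeping you flag as the main obstacle goes through: every vertex lies in exactly one $N$-coset, so its $b$-edge can be deleted and its incident $a$-edge inserted at most once; distinct squares touch disjoint $b$-edges and disjoint $a$-edges; and processing the edges of $T$ in any tree-building order guarantees each merge joins two genuinely distinct current cycles, dropping the component count from $[G:H]$ to $1$. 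One cosmetic point: a left coset $gH$ is not itself ``dihedral''; what you mean and use is that the $\{b,c\}$-labelled subgraph on $gH$ is isomorphic to $\Gamma(H,\{b,c\})$, which is a single $2m$-cycle because $H$ is a finite dihedral group on those two reflections (and $m\geq 2$ once you have excluded $b=c$).
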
   
In the sequel, we will try to extend Theorem~\ref{Rapaport-Strasser} to infinite groups.
But we need to be careful.
There are nontrivial examples of infinite groups such that their Cayley graphs do not possess any Hamilton circle, see Section 4.1 of \cite{Cayley_HC_M_T}.
Here we have an analogous situation.
For instance let us consider~$\Z_2\ast (\Z_2\times \Z_2)$ with a canonical generating set.
Suppose that~$a$ is the generator of the first~$\Z_2$.
Then every edge with the label~$a$ in this Cayley graph is a cut edge.
Hence we only consider Cayley graphs of connectivity at least two. 
On the other hand our graphs are cubic and so their connectivities are at most three.

We note that by Bass-Serre theory, we are able to classify groups with respect to the low connectivity as terms of fundamental groups of graphs.
It has been done by Droms, see Section 3 of \cite{Droms}.
But what we need here is a presentation of these groups.
Thus we utilize the classifications of Georgakopoulos~\cite{Ageloscubic} to find a Hamilton circle.
First we need the following crucial lemma which has been proved by Babai.

\begin{lemma}{\rm\cite[Lemma 2.4]{babai1997growth}}\label{one-ended}
Let~$\Gamma$ be any cubic Cayley graph of any one-ended group.
Then~$\Gamma$  is~$3$-connected.
\end{lemma}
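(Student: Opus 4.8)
The plan is to show that a connected cubic graph with sufficiently rich symmetry cannot be separated by one or two vertices, relying on the structure of cuts in cubic graphs together with vertex-transitivity and one-endedness. First I would note that $\Gamma$ is connected (it is a Cayley graph of a group) and $3$-regular, hence $\Gamma$ has no cut vertex or $2$-cut would have to be examined directly. I would argue by contradiction: suppose $\Gamma$ has a separator $X$ with $|X| \le 2$, so $\Gamma \setminus X$ has at least two components. Since $\Gamma$ is vertex-transitive and infinite with exactly one end, every finite separator $X$ leaves exactly one infinite component $C_0$; all other components of $\Gamma \setminus X$ are finite. Pick a separator $X$ with $|X|$ minimal (so $|X|\in\{1,2\}$) and, among those, with a finite component $D$ of minimum order.

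Next I would exploit the cubic condition to bound the edges between $D$ and $X$. Each vertex of $X$ has degree $3$ in $\Gamma$, so the number of edges leaving $X$ is at most $3|X| \le 6$, and the number of edges between $D$ and $X$ is at most $3|X| - (\text{edges from } X \text{ to other components and within } X)$. If $|X| = 1$, say $X = \{v\}$, then $v$ sends at most $3$ edges total, so it can reach at most $3$ components; more importantly $D$ is attached to the rest of $\Gamma$ only through $v$, meaning $\Gamma[D \cup \{v\}]$ is joined to the rest by a single cut vertex — but then a vertex of $D$ adjacent to $v$ together with the cubic structure forces a configuration we can translate by a group element fixing nothing, producing a strictly smaller finite component or contradicting connectivity/minimality. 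The $|X| = 2$ case is the substantive one: with $X = \{u,w\}$, the edge boundary of $D$ has size at most $4$ (at most $2$ from each of $u,w$, minus any edge $uw$ or edges to the infinite side), and a short case analysis on whether $uw \in E(\Gamma)$ and on the degrees inside $D$ pins down $D$ to be very small — essentially a single vertex or a short path — which then contradicts minimality once we translate $X$ by a suitable group element to obtain an even smaller finite piece, or contradicts one-endedness by exhibiting a second end.

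The main obstacle I anticipate is making the ``translate by a group element to shrink the finite component'' step rigorous: one must choose $g \in G$ so that $gX$ still separates but $gD$ is properly contained in $D$ (or disjoint from $D \cup X$ on the wrong side), and this requires knowing that some nontrivial $g$ moves $X$ into the region governed by $D$ — which is exactly where one-endedness and the finiteness of $D$ are used, since the infinite component must ``wrap around'' and the orbit of any vertex is infinite. I would handle this by observing that $G$ acts on $\Gamma$ with all orbits infinite, so for the vertices of $D$ there are group elements carrying them arbitrarily far, and a minimal-counterexample/descent argument then closes the case. An alternative, cleaner route — which I would mention as a fallback — is to invoke the general fact that a connected vertex-transitive graph of degree $d$ has (vertex-)connectivity at least $\tfrac{2}{3}(d+1)$ (Mader/Watkins), which for $d = 3$ already gives connectivity $\ge 3$ once we rule out the finitely many small exceptional cases using one-endedness; but since the excerpt attributes the statement to Babai's growth paper, I would present the self-contained cubic argument above as the primary proof.
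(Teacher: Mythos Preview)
The paper does not prove this lemma at all: it is quoted verbatim from Babai's paper and used as a black box, with no argument given. So there is no ``paper's own proof'' to compare your attempt against.

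A brief comment on your proposal itself. Your primary line --- take a minimum-size separator $X$ with a smallest finite lobe $D$, then use vertex-transitivity to translate $X$ into $D$ and produce a strictly smaller lobe --- is the right idea and is essentially how Babai's original argument proceeds; you are correct that the delicate point is choosing $g\in G$ so that $gX$ lands inside $D\cup X$ in a way that genuinely shrinks the lobe, and your sketch does not yet pin this down. Your fallback via the Mader/Watkins bound $\kappa\ge \tfrac{2}{3}(d+1)$ is not safe as stated: that inequality is a theorem about \emph{finite} vertex-transitive graphs, and it fails for infinite ones without the one-ended hypothesis --- the infinite ladder (the Cayley graph of $\mathbb{Z}\times\mathbb{Z}_2$ with the obvious generators) is cubic and vertex-transitive but only $2$-connected. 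So one-endedness is doing essential work here, not merely ruling out ``finitely many small exceptions'', and the Mader/Watkins route would need an independent argument to transfer it to the infinite one-ended setting.
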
	

By the work of Georgakopoulos in~\cite{Ageloscubic} we have the following lemma about the generating sets of 2-connected cubic Cayley graphs. 

\begin{lemma}{\rm\cite[Theorem 1.1 and Theorem 1.2]{Ageloscubic}}\label{kapa=2}
	Let~$G=\langle S \rangle~$ be a group, where~$S=\{a,b,c\}$ is a set of involutions and~$ab=ba$.
	If~$\kappa(\Gamma(G,S))=2$, then~$G$ is isomorphic to one of the following groups:  
	\begin{enumerate}[\rm (i)]
		\item~$\langle a,b,c\mid a^2,b^2,c^2,(ab)^2,(abc)^m\rangle$,~$m \geq 1$.
		\item~$\langle a,b,c\mid a^2,b^2,c^2,(ab)^{2},(ac)^m\rangle$,~$m\geq 2$.
	\end{enumerate}
\end{lemma}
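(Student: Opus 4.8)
The plan is to reduce the classification to the known structure theory of cubic planar Cayley graphs, so that the two presentations fall out of an analysis of a planar embedding. First I would record the setup. Since $|S|=3$ and $S$ consists of three distinct involutions, $\Gamma \defi \Gamma(G,S)$ is cubic and simple unless $c\in\langle a,b\rangle$, i.e.\ $c=ab$, in which case $G=\Z_2\times\Z_2$ and $\Gamma=K_4$, which is $3$-connected; so under the hypothesis $\kappa(\Gamma)=2$ we may assume $\langle a,b\rangle\cong\Z_2\times\Z_2$ with $c\notin\langle a,b\rangle$ and $\Gamma$ cubic. In particular each vertex $v$ lies on the $4$-cycle $v[a,b,a,b]$, so $\Gamma$ carries a distinguished family of pairwise edge-disjoint ``$\Z_2^2$-squares'', and since every vertex has exactly one incident $c$-edge these $c$-edges form a perfect matching joining the squares to one another.

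Next I would use Babai's Lemma~\ref{one-ended}: were $G$ one-ended, $\Gamma$ would be $3$-connected, contradicting $\kappa(\Gamma)=2$. Hence $G$ is either finite or infinite with more than one end; in the latter case Stallings' theorem yields a nontrivial splitting of $G$ over a finite subgroup, and a smallest separating set of the cubic graph $\Gamma$ having size $2$ forces this decomposition to be very constrained — the edge groups have order at most $2$ and, because of the $a$-$b$ squares, the vertex groups contain $\Z_2\times\Z_2$. This structural information, together with the observation that a cubic graph whose $2$-separators are permuted coherently by a vertex-transitive group action is planar, reduces the problem to classifying the cubic \emph{planar} Cayley graphs of groups generated by three involutions two of which commute.

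That final classification is precisely the content of \cite[Theorems 1.1 and 1.2]{Ageloscubic}: in a planar embedding the bounded faces are the $\Z_2^2$-squares together with a second family of faces, all of a single length $m$, and whether successive squares are glued along their $c$-edges without a ``twist'' or with one determines whether the boundary word of such a face is $(abc)^m$ (case (i)) or $(ac)^m$ (case (ii)); reading off the face relations of the embedding then gives the stated presentation, with $m$ allowed to range over all admissible values so that the finite members of the two families are captured uniformly alongside the infinite ones. \textbf{The main obstacle} is exactly this planar-classification step: a fully self-contained argument would have to reconstruct Georgakopoulos's analysis of the face structure of cubic planar Cayley graphs, so the efficient route is to make the reduction of the first two paragraphs airtight and then quote \cite{Ageloscubic} for the enumeration. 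A secondary nuisance is checking that the degenerate and two-ended cases really are excluded by $\kappa(\Gamma)=2$ rather than producing spurious extra groups.
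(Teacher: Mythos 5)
The paper does not prove this lemma at all: it is imported verbatim from \cite{Ageloscubic} (Theorems~1.1 and~1.2 there), and the label in the statement makes this explicit. Your plan ultimately does the same thing --- you concede that the enumeration of the two presentations must be quoted from Georgakopoulos --- so in effect you and the paper end up in the same place. To that extent the proposal is fine.

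However, the reduction you interpose before the citation contains a step that is not an ``observation'' but is itself the hard theorem. You assert that ``a cubic graph whose $2$-separators are permuted coherently by a vertex-transitive group action is planar''; establishing that every cubic Cayley graph of connectivity~$2$ is planar is precisely Theorem~1.1 of \cite{Ageloscubic}, and it is a substantial result, not something one gets for free from transitivity. If you quote Theorems~1.1 and~1.2 in full you do not need to establish planarity yourself, and then the detour through Babai's Lemma~\ref{one-ended} and Stallings' theorem does no work either: neither the splitting over a finite subgroup nor the edge-group bound feeds into the derivation of the presentations, which come entirely from the face analysis in the quoted classification. The one check you should make explicit, and currently only gesture at, is that among the families listed in \cite{Ageloscubic} for three involutive generators, the hypothesis $ab=ba$ (equivalently the relator $(ab)^2$) selects exactly the two families (i) and (ii) in the statement and excludes the others; that is routine but it is where the hypotheses of this lemma actually get used.
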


With the help of the lemmas above we are able to prove the extension of Theorem~\ref{Rapaport-Strasser} for 2-connected graphs.

\begin{thm}
\label{inf_rap_k2}
	Let~$G=\langle S \rangle~$ be a group, where~$S=\{a,b,c\}$ is a set of involutions such that~$ab=ba$.
	If~$\kappa(\Gamma(G,S))=2$, then~$\Gamma(G,S)$ is Hamiltonian.
\end{thm}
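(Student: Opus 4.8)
By Lemma~\ref{kapa=2}, the group $G$ is isomorphic to one of the two presentations listed there, so I would split into the two cases and in each case exhibit an explicit Hamilton circle (or Hamilton cycle, if the group turns out finite). The common strategy is to find a partition of $V(\Gamma(G,S))$ into finite sets $X_i$ (indexed by $\bbZ$, or by a finite set if $G$ is finite) that satisfies the hypotheses of Lemma~\ref{Cylinder}: each $\Gamma[X_i]$ carries a Hamilton cycle or is a $K_2$, consecutive $X_i$'s are joined by a perfect matching, and there is a bounded bandwidth $k$. The natural choice is to let $X_i$ be cosets of a cyclic (or finite dihedral) subgroup generated by one of the ``rotational'' words appearing in the presentation.

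\textbf{Case (i): $G=\langle a,b,c\mid a^2,b^2,c^2,(ab)^2,(abc)^m\rangle$.} Here the element $r \defi abc$ has order $m$. If $m<\infty$ and $G$ is finite we are in the finite Rapaport--Strasser situation (Theorem~\ref{Rapaport-Strasser}), so assume $G$ is infinite. I would consider the cyclic subgroup $N=\langle r\rangle$; one checks using the relations $a^2=b^2=(ab)^2=1$ that $\{a,b\}$ generates a Klein four-group $V=\langle a,b\rangle\cong\Z_2\times\Z_2$ and that $G$ is built from $V$ and $N$. The cosets of a suitable normal cyclic subgroup then play the role of the $X_i$: on each coset the induced subgraph contains a cycle through the $a$- and $b$-edges (a $4$-cycle, since $(ab)^2=1$), and the $c$-edges provide the matching between consecutive cosets. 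Checking that $c$-edges go only between consecutive cosets gives the bandwidth bound $k$. Then Lemma~\ref{Cylinder} yields a Hamilton circle.

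\textbf{Case (ii): $G=\langle a,b,c\mid a^2,b^2,c^2,(ab)^2,(ac)^m\rangle$.} This is symmetric in spirit: now $ac$ has order $m$, and $\langle a,c\rangle$ is a (finite) dihedral group $D_{2m}$, while $b$ commutes with $a$. I would take $X_i$ to be cosets of $\langle a,c\rangle$ (or of a cyclic subgroup of it of finite index giving a $\bbZ$-indexing), so that $\Gamma[X_i]$ is the Cayley graph of the finite dihedral group $D_{2m}$ with respect to $\{a,c\}$, which is a $2m$-cycle hence trivially Hamiltonian; the $b$-edges form the perfect matchings between consecutive cosets, since $b$ commutes with $a$ and has order $2$. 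Again the locality condition holds because $b$ only links adjacent cosets, and Lemma~\ref{Cylinder} finishes the proof. In both cases, if the relevant quotient is finite one instead directly reads off a Hamilton cycle; if it is infinite one gets the circle.

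\textbf{Main obstacle.} The delicate point is not the abstract application of Lemma~\ref{Cylinder} but verifying its hypotheses from the presentations: one must identify the correct (normal, cyclic or virtually cyclic) subgroup whose cosets tile the vertex set into finite pieces, confirm that the ``cycle'' generators act within a coset while the ``matching'' generator moves between consecutive cosets only, and handle the bookkeeping of which coset each generator sends a vertex to — in particular ruling out edges between non-consecutive $X_i$'s to get the bandwidth constant $k$. A secondary subtlety is the borderline values of $m$ (e.g.\ $m=1$ in (i), $m=2$ in (ii)) and distinguishing when $G$ is finite versus two-ended versus one-ended, since the one-ended case is excluded by Lemma~\ref{one-ended} (it would force $3$-connectivity, contradicting $\kappa=2$), so in fact $G$ is either finite or two-ended, which is what makes the coset-tiling approach available.
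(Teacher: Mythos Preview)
Your approach has a genuine gap: the claim in your final paragraph that ``$G$ is either finite or two-ended'' is false. In both presentations of Lemma~\ref{kapa=2} the group decomposes as an amalgamated free product $(\Z_2\times\Z_2)\ast_{\Z_2}D_{2m}$ (over $\langle ab\rangle$ in case~(i), over $\langle a\rangle$ in case~(ii)), and the indices of the amalgamated $\Z_2$ in the two factors are $2$ and $m$. Hence by Stallings' theorem the group is two-ended only when $m=2$; for every $m\geq 3$ it has infinitely many ends. Lemma~\ref{one-ended} rules out one end, but it says nothing about infinitely many, and cubic Cayley graphs with $\kappa=2$ and infinitely many ends certainly exist. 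Since Lemma~\ref{Cylinder} requires a partition indexed by $\Z$ with perfect matchings only between consecutive blocks, it simply cannot apply to a graph with infinitely many ends, so your coset-tiling strategy collapses for all $m\geq 3$. (Even for $m=2$ there is a secondary issue: the $c$-edges leaving a left coset $g\langle a,b\rangle$ in case~(i) do not all land in a single neighbouring coset, because $cac\notin\langle a,b\rangle$; so the ``$c$-edges give a perfect matching between consecutive $X_i$'' step needs more care.)

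The paper proceeds quite differently. Instead of finite blocks it identifies an element of \emph{infinite} order --- $ac$ in case~(i) and $bc$ in case~(ii) --- so that $\langle ac\rangle\cong\Z$ (respectively $\langle bc\rangle\cong\Z$), and covers $\Gamma$ by the disjoint double rays formed by the cosets of this infinite cyclic subgroup. It then applies Lemma~\ref{What is HC Inf} directly: every vertex has degree~$2$ in the union of these rays, and for each end $\omega$ one exhibits a defining sequence of $2$-edge cuts (all labelled $c$, respectively $b$) that the rays cross exactly twice, giving end-degree~$2$ and topological connectedness. This argument is insensitive to the number of ends, which is exactly what is needed here.
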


\begin{proof}
Using Lemma~\ref{kapa=2} we can split the proof in two cases:
\begin{enumerate}[\rm (i)]
\item $G\cong \langle a,b,c\mid a^2,b^2,c^2,(ab)^2,(abc)^m\rangle$,~$m \geq 1$.\\
\noindent If~$m=1$, then~$G$ is finite and we are done with the use of Theorem~\ref{Rapaport-Strasser}.
	So we can assume that~$m\geq 2$.
We set~$\Gamma \defi \Gamma(G,\{a,b,c\})$. 
Let us define~$H \defi \langle ac\rangle$, note that~$H$ is isomorphic to~$\Z$ and let~$R$ be the double ray spanning~$H$.§
As~$H$ is a subgroup of~$G$ we can now cover~$\Gamma$ by disjoint copies of~$R$, set~$\Rcal$ as the union of all those copies of~$R$. 
We want to apply Lemma~\ref{What is HC Inf}. 
Obviously~$\Rcal$ induces degree two on every vertex of~$\Gamma$. 
It follows from transitivity, that for any end~$\omega$ there is a defining sequence~$(F_i)_{i \in \bbN}$ such that~$|F_i| =2$ and such that the label of each edge in each~$F_i$ is~$c$.

To illustrate, consider the following:
The cycle~$C \defi 1[a,b,a,b]$ separates~$\Gamma$ into two non-empty connected graphs, say~$\Gamma_1$ and~$\Gamma_2$.
Let~$e_1$ and~$e_2$ be the two edges of~$\Gamma$ between~$C$ and~$\Gamma_1$.
Note that the label of both of those edges is~$c$, additionally note that~$F \defi \{e_1,e_2\}$ separates~$\Gamma_1$ from~$\Gamma [\Gamma_2 \cup C]$.
Let~$R^\prime$ be any ray in~$\Gamma$ belonging to an end~$\omega$. 
There is an infinite number of edges contained in~$R^\prime$ with the label~$c$ as the order of~$a,b,ab$ and~$ba$ is two, let~$D$ be the set of those edges.  
We can now pick images under some automorphisms of~$F$ which meet~$D$ to create the defining sequence~$(F_i)_{i \in \bbN}$.  

The collection of double rays~$\Rcal$ meets every such~$F_i$ in exactly two edges. 
It is straight forward to check that~$\Rcal$ meets every finite cut of~$\Gamma$. 
This implies that the closure of~$\Rcal$ is topologically connected and that each end of~$\Gamma$ has degree two in this closure.

\item~$G\cong \langle a,b,c\mid a^2,b^2,c^2,(ab)^{2},(ac)^m\rangle$,~$m\geq 2$ \\
The proof of ii) is very similar to i). 
The element of infinite order here is~$bc$ and the defining sequence consists of two edges both with label~$b$ instead of~$c$.\footnote{One could also show that~$\Gamma$ is outer planer as it does not contain a~$K_4$ or~$K_{2,3}$ minor and thus contains a unique Hamilton circle, see the work of Heuer~\cite{Heuer_Hamilton3}.}\qedhere
\end{enumerate} 
\end{proof}

In the following we give an outlook on the problem of extending Theorem~\ref{Rapaport-Strasser} to infinite groups with 3-connected Cayley graphs. 
Similar to the Lemma~\ref{kapa=2} there is a characterization for 3-connected Cayley graphs which we state in Lemma~\ref{kapa=3}. 
Note that the items (i) and (ii) have at most one end. 

\begin{lemma}{\rm\cite{Ageloscubicplanar}}\label{kapa=3}
Let~$G=\langle S \rangle~$ be a planar group, where~$S=\{a,b,c\}$ is a set of involutions and~$ab=ba$.
If~$\kappa(\Gamma(G,S))=3$, then~$G$ is isomorphic to one of the following groups:  
	\begin{enumerate}[\rm (i)]
		\item~$\langle a,b,c \mid a^2, b^2, c^2, (ab)^2, (acbc)^m\rangle, m \geq 1$.
		\item~$\langle a, b, c \mid a^2, b^2, c^2, (ab)^2, (bc)^m, (ca)^p\rangle, m, p \geq  2$.
		\item~$\langle a, b, c \mid a^2, b^2, c^2, (ab)^2, (bcac)^n, (ca)^{2m}\rangle, n,m\geq 2$
	\end{enumerate}
\end{lemma}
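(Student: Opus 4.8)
Since the statement is a classification, the plan is not to verify each isomorphism individually but to reconstruct the list from the combinatorics of the planar embedding. First I would record the elementary structure: as $a,b,c$ are involutions, $\Gamma\defi\Gamma(G,S)$ is a cubic graph, and being $3$-connected and planar it has an essentially unique embedding in the sphere, which is therefore invariant under the natural $G$-action (the reduction from ``$G$ is a planar group'' to ``$\Gamma(G,S)$ itself is planar'' for cubic Cayley graphs of this shape is part of the structure theory of \cite{Ageloscubicplanar, Droms}, and I would cite it rather than redo it). Forming the Cayley complex $\widehat\Gamma$ by gluing a $2$-cell along every face of this embedding yields a simply connected $2$-complex, because the face boundaries of a $3$-connected planar graph generate its cycle space and the sphere is simply connected; consequently the face boundary words, together with $a^2,b^2,c^2$, constitute a presentation of $G$. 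Thus the problem reduces to determining which face patterns can occur at a vertex.

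Next I would carry out the local analysis. At the identity the three incident edges run to $a$, $b$, $c$; fix their cyclic order in the embedding, say $(a,b,c)$, noting that $G$-invariance forces the same rotation (up to one global reflection) at every vertex. There are exactly three faces meeting $1$, one in each corner, and each edge lies on exactly two of them, so every label appears in exactly two of the three face-types. Because $ab=ba$ and $(ab)^2=1$, the closed walk $1,a,ab,b$ through the four distinct edges labelled $a,b,a,b$ is a genuine $4$-cycle; a short argument shows that in the $3$-connected case it must bound a face (if not, it separates $\Gamma$, and distributing its four leaving edges among the two sides produces a vertex cut of size at most two, landing us in Lemma~\ref{kapa=2}). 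Hence one corner at $1$ is a square $(ab)^2$. The two remaining corners are then faces whose boundary walks alternate among the labels: each is either a $2k$-gon $(xy)^k$ in two labels, or, when a corner ``wraps around'' past the square, a $4k$-gon of the form $(xbzb)^k$ with $\{x,z\}\subseteq\{a,c\}$ (and the symmetric variants obtained from other rotations or the reflection). Enumerating the finitely many assignments of face-types to the three corners that are consistent with the ``each label on exactly two faces'' constraint produces a short list of candidate presentations.

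Finally I would prune and normalise. Candidates in which the decisive cut has size at most two are precisely those already covered by Lemma~\ref{kapa=2} and are discarded; the one-ended cases are recorded as such (these become (i) and (ii)); and Tietze transformations bring the surviving presentations into the stated normal forms $\langle a,b,c\mid a^2,b^2,c^2,(ab)^2,(acbc)^m\rangle$, $\langle a,b,c\mid a^2,b^2,c^2,(ab)^2,(bc)^m,(ca)^p\rangle$, $\langle a,b,c\mid a^2,b^2,c^2,(ab)^2,(bcac)^n,(ca)^{2m}\rangle$, together with the converse check that each of these groups really does admit a $3$-connected planar cubic Cayley graph. I expect the main obstacle to be the assertion that the face boundary words give a \emph{complete} presentation (equivalently, that $\widehat\Gamma$ is simply connected) and the careful bookkeeping that the rotation system is globally consistent and that no other corner-assignment survives; this is exactly where the detailed planarity analysis of Georgakopoulos~\cite{Ageloscubicplanar} (building on \cite{Ageloscubic}) does the real work, so in practice I would invoke that result directly rather than reprove it from scratch.
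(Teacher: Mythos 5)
The paper offers no proof of this lemma at all: it is quoted verbatim from Georgakopoulos's classification of cubic planar Cayley graphs \cite{Ageloscubicplanar}, so the intended justification is precisely the citation that your proposal also falls back on in its final paragraph. Your sketch of the face/rotation analysis and the passage from face boundaries to a presentation is a reasonable outline of what that reference actually does, but since both you and the authors ultimately invoke \cite{Ageloscubicplanar} for the real work, the approaches coincide.
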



Lemma~\ref{kapa=3} gives us hope that the following Conjecture~\ref{Conj_inf_rap} might be a good first step to prove Conjecture~\ref{Conj_agelos} of  Georgakopoulos and Mohar, see~\cite{Ageloscubicplanar}.

\begin{conj}
\label{Conj_inf_rap}
Let~$G$ be a group, generated by three involutions~$a,b,c$ such that~$ab=ba$ and such that~$\Gamma(G, \{a,b,c\})$ is two-connected. 
Then~$\Gamma(G, \{a,b,c\})$ is Hamiltonian.
\end{conj}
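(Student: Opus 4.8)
The plan is to prove Conjecture~\ref{Conj_inf_rap} by pushing the method of Theorem~\ref{inf_rap_k2} through all remaining cases, i.e.\ for every group~$G$ with a three-involution generating set~$S=\{a,b,c\}$, $ab=ba$, and $\kappa(\Gamma(G,S))\in\{2,3\}$ (connectivity at most~$3$ being automatic for cubic graphs, and the excluded case $\kappa=1$ producing cut edges). The $\kappa=2$ case is exactly Theorem~\ref{inf_rap_k2}, so the real content is $\kappa=3$. Here I would first invoke Lemma~\ref{one-ended} together with the fact that in the one-ended case a cubic Cayley graph is $3$-connected, to separate off the genuinely one- or two-ended instances, where finite or double-ray methods already in the paper apply. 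For the multi-ended, $3$-connected case I would appeal to the Georgakopoulos--Mohar classification; but note that Lemma~\ref{kapa=3} as stated requires~$G$ planar, so the first task is to argue that a cubic $3$-connected Cayley graph with this presentation type is necessarily planar (this is where Droms's Bass--Serre analysis, cited in the section, enters: such a group is a fundamental group of a graph of finite groups with small edge groups, and the corresponding Cayley graphs are planar), so that Lemma~\ref{kapa=3} actually covers all cases.

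Granting planarity and the classification, the proof splits into the three families of Lemma~\ref{kapa=3}. In each I would identify an infinite-order element $w\in G$ (respectively something like $acbc$, or one of $bc$/$ca$, or $ca$), set $H=\langle w\rangle\cong\Z$, let $R$ be the double ray spanning~$H$, and cover $\Gamma$ by the disjoint left cosets of~$H$, each carrying a translate of~$R$; call the union~$\Rcal$. As in Theorem~\ref{inf_rap_k2}, $\Rcal$ has degree two at every vertex by construction, so by Lemma~\ref{What is HC Inf} it remains to show the closure of~$\Rcal$ is topologically connected and that every end has degree exactly two in it. For this I would produce, for each end~$\omega$, a defining sequence $(F_i)_{i\in\bbN}$ of edge cuts with $|F_i|=2$ whose edges all carry a label not used by~$w$ (there are infinitely many such edges on any ray of~$\omega$, since the other generators have finite order relations); the transitivity of~$G$ lets me move a single small separating cut — obtained from a finite relator cycle, e.g.\ $1[a,b,a,b]$ when $(ab)^2$ is a relator — around to meet an arbitrary ray. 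Then $\Rcal$ meets each $F_i$ in exactly two edges, giving connectivity of the closure and end-degree two.

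The step I expect to be the main obstacle is \emph{producing the two-edge defining sequences in the families where the ``unused-label'' argument is more delicate}, in particular family~(ii) $\langle a,b,c\mid a^2,b^2,c^2,(ab)^2,(bc)^m,(ca)^p\rangle$ and family~(iii): unlike the $\kappa=2$ cases, here two of the three products $ab,bc,ca$ may both have finite order, so one must check that the chosen generator~$w$ really has infinite order and that there genuinely is a size-two cut (not just a size-three one) separating the relevant relator cycle from the rest — otherwise the end could have degree three in~$\Rcal$ and Lemma~\ref{What is HC Inf} fails. Resolving this likely needs the explicit planar embedding: in a $3$-connected planar cubic graph the finite relator cycles bounding faces, and the structure of the cut system, are governed by the face boundaries, so I would read off the relevant two-edge cuts from the (unique, by $3$-connectedness) planar embedding, possibly combining pairs of the face-cycles to build the separating pairs of edges. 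A secondary subtlety is verifying that the cosets of $\langle w\rangle$ really partition $V(\Gamma)$ with the right adjacency pattern between consecutive cosets; in the worst case one falls back on Lemma~\ref{Cylinder} instead of Lemma~\ref{What is HC Inf}, partitioning by cosets of a suitable finite-by-$\Z$ or finite-by-$D_\infty$ subgroup and exhibiting the finite cycles and perfect matchings directly.
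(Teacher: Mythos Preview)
This statement is stated in the paper as a \emph{conjecture}; the paper does not prove it and explicitly offers it as an open first step toward the Georgakopoulos--Mohar conjecture. There is therefore no paper proof to compare against, and your proposal has to stand on its own.

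It does not. Two gaps are structural rather than technical. First, the reduction to the planar case is unjustified: Lemma~\ref{kapa=3} classifies only \emph{planar} groups with this presentation type, while Conjecture~\ref{Conj_inf_rap} makes no planarity hypothesis. Your appeal to Droms's Bass--Serre analysis does not yield planarity of $\Gamma(G,\{a,b,c\})$; that analysis decomposes accessible groups as fundamental groups of graphs of groups, but says nothing about planarity of the Cayley graph with respect to the given generating set. So even granting everything else, you have not covered the non-planar $3$-connected instances.

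Second, and decisively, the mechanism you import from Theorem~\ref{inf_rap_k2}---defining sequences $(F_i)$ with $|F_i|=2$---is impossible when $\kappa=3$. In a cubic graph one has $\kappa\le\lambda\le 3$, so $\kappa=3$ forces edge connectivity $\lambda=3$: there are \emph{no} edge cuts of size two. You flag this as something to ``check'', but it is not a check that can succeed; it provably fails. Hence your coset collection $\Rcal$ meets every finite cut in at least three edges, and Lemma~\ref{What is HC Inf} will never certify end-degree two. The one-ended families (i) and (ii) of Lemma~\ref{kapa=3} make this concrete: if $[G:\langle w\rangle]=k\ge 2$, your $k$ disjoint double rays all converge to the unique end, giving it degree $2k>2$, so the closure of~$\Rcal$ is not a circle. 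A proof in the $\kappa=3$ regime would need a genuinely new construction (merging the coset rays through the relator cycles, not leaving them disjoint), not a transplant of the $\kappa=2$ argument.
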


\begin{conj}{\rm\cite{Ageloscubicplanar}}\label{Conj_agelos}
Every finitely generated 3-connected planar Cayley graph admits a Hamilton circle.
\end{conj}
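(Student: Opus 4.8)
The plan is to argue along the structure of the planar group $G=\langle S\rangle$, splitting into cases according to the number of ends $e(G)$ and combining the classification of finitely generated planar (discontinuous) groups with the few-ended machinery developed earlier in this paper. When $e(G)=0$ the Cayley graph is a finite $3$-connected planar vertex-transitive graph, and one invokes the known results on Hamilton cycles in finite planar Cayley graphs (an instance of the weak Lov\'{a}sz conjecture that is accessible for the groups occurring here). When $e(G)=2$, Theorem~\ref{stallings} gives a finite normal subgroup $C\le G$ with $G/C\cong\Z$ or $\Dinf$; here I would extend Theorem~\ref{Z_p2} and Theorem~\ref{split over 2group} from $C=\Z_p$ to an arbitrary finite $C$ by the same cylinder argument: take a Hamilton double ray of $\Gamma(G/C,\overline{S})$ via Lemma~\ref{arcZ} or Lemma~\ref{arcDinf}, lift it to a transversal of $C$, and fill in each coset $Cx_i$, using that $3$-connectedness and planarity sharply restrict $C$ and the extension data so that the hypotheses of Lemma~\ref{Cylinder} can be arranged.

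When $e(G)=\infty$, Stallings' theorem presents $G$ as the fundamental group of a finite graph of groups with finite edge groups and one-ended-or-finite planar vertex groups, and the Bass--Serre tree then displays $\Gamma(G,S)$ as a tree of tessellations. I would run an inductive step: assuming the required Hamilton structure for the one-ended vertex pieces, stitch the pieces together along the tree using the infinite Hamilton cover / Factor Group Lemma machinery (Theorem~\ref{inf_group_factor}) already in the paper. The role of $3$-connectedness in this case is precisely to exclude the cut-edge phenomenon exhibited by $\Z_2\ast(\Z_2\times\Z_2)$: it forces the edge groups and amalgamations to be rich enough that no separator of order at most two survives, which is what lets the pieces glue without obstruction.

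The genuinely hard case is $e(G)=1$. Then $\Gamma(G,S)$ is $3$-connected (automatically so in the cubic case by Babai's Lemma~\ref{one-ended}) and embeds in the plane as a locally finite tessellation, essentially the Cayley graph of a planar discontinuous group. The natural attack is to transport Tutte's and Thomassen's theory of Tutte paths in finite $3$-connected planar graphs to the Freudenthal compactification: exhaust $\Gamma$ by finite $2$-connected planar subgraphs $\Gamma_1\subseteq\Gamma_2\subseteq\cdots$ with cycle boundaries, find Tutte paths with prescribed behaviour on a bounding edge inside each annular region $\Gamma_{n+1}\setminus\Gamma_n$ (exploiting vertex-transitivity to reuse one local configuration everywhere), and pass to a limit arc that meets every vertex and every end exactly twice, so that Lemma~\ref{What is HC Inf} certifies a Hamilton circle.

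This last step is where essentially all the difficulty lies, and it is why I expect the full statement to be beyond the present methods: the one-ended case already subsumes a Cayley-graph instance of Bruhn's conjecture on Hamilton circles in $4$-connected locally finite planar graphs, with the connectivity relaxed to $3$ in exchange for the symmetry of a Cayley graph. Controlling the limit of the finite Tutte paths — ensuring convergence to an arc whose closure is a topological circle, that no end is omitted, and that the arc does not escape to infinity along the wrong end — is the main obstacle, and it is plausible that genuinely new ideas are needed there. A sensible intermediate target is first to settle the remaining two-connected cubic cases, cf.\ Conjecture~\ref{Conj_inf_rap} and the classification in Lemma~\ref{kapa=3}, before attacking the general $3$-connected planar case.
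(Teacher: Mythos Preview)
The statement you are attempting is a \emph{conjecture} in the paper, not a theorem: it is quoted from Georgakopoulos--Mohar as an open problem, and the paper offers no proof. There is therefore nothing to compare your proposal against; the paper's stance is precisely that Conjecture~\ref{Conj_inf_rap} would be a first step toward Conjecture~\ref{Conj_agelos}, and it leaves both open.

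Your write-up is a research outline rather than a proof, and you say as much yourself when you note that the one-ended case ``is why I expect the full statement to be beyond the present methods''. That honesty is appropriate, but it means the proposal does not establish the conjecture. Concretely: in the two-ended case you assert that $3$-connectedness and planarity ``sharply restrict $C$ and the extension data so that the hypotheses of Lemma~\ref{Cylinder} can be arranged'', but Lemma~\ref{Cylinder} needs a Hamilton cycle inside each coset block and a perfect matching between consecutive blocks, and nothing in your sketch shows how to obtain either for general finite $C$ and an arbitrary generating set $S$. In the infinitely-ended case, invoking Theorem~\ref{inf_group_factor} is not apt: that theorem requires a cyclic finite-index subgroup and produces only a Hamilton double ray, neither of which matches the tree-of-tessellations picture you describe. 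And in the one-ended case you have, by your own account, no argument---only an analogy with Tutte-path theory and a list of obstacles.

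So the correct verdict is: the paper does not prove this statement, and neither does your proposal; what you have written is a plausible programme with several substantial gaps, the largest of which (the one-ended case) you already flag as open.
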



We hope that methods used to prove Conjecture~\ref{Conj_inf_rap}, and then possibly Conjecture~\ref{Conj_agelos}, would open the possibility to also prove additional results like the extension of Theorem~\ref{campan} of Rankin, which we  propose in Conjecture~\ref{Conj_campan}.

\begin{thm}{\rm\cite{campan}}
\label{campan}
Let~$G$ be a finite group, generated by two elements~$a,b$ such that~$(ab)^2 = 1$.
Then the Cayley graph ~$\Gamma(G, \{a,b\})$ has a Hamilton cycle.
\end{thm}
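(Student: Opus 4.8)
A proof of this theorem of Rankin, organised in the style of the preceding sections, would go as follows (assuming, as is customary in this circle of problems, that $|G|\geq 3$).

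First I would clear away the degenerate cases. If $ab=1$ then $b=a^{-1}$ and $G=\langle a\rangle$ is cyclic, so $\Gamma(G,\{a,b\})$ is a single cycle through all of $G$; and if $a$ and $b$ are both involutions then $\Gamma(G,\{a,b\})$ is $2$-regular and connected, hence again one cycle. So we may assume that $t:=ab$ has order exactly~$2$ and, after possibly interchanging $a$ and $b$, that $n:=o(a)\geq 3$ and that $G$ is not cyclic. Writing $b=a^{-1}t$, the hypothesis $(ab)^2=1$ becomes $bab=a^{-1}$, i.e.\ $ba=a^{-1}b^{-1}$; this identity is the engine of the argument.

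Next I would record the local structure and set up an induction on $|G|$. The right cosets $g\langle a\rangle$ partition $V(\Gamma(G,\{a,b\}))$, and on each of them the $a$-labelled edges form an $n$-cycle, while the relation $(ab)^2=1$ produces through every vertex~$v$ a genuine $4$-cycle $v,\,va,\,vab,\,vb^{-1}$ with label pattern $a,b,a,b$; one checks that these $4$-cycles cover every edge of the graph, in the manner of the $4$-cycles used in Theorem~\ref{Rapaport-Strasser}. Now pick a minimal normal subgroup $N$ of~$G$. If $N$ is cyclic, pass to $\overline G:=G/N$: by the inductive hypothesis $\Gamma(\overline G,\overline{\{a,b\}})$ has a Hamilton cycle, and the abundance of the $4$-cycles above gives enough freedom to replace it by a Hamilton cycle along which the product of the edge labels generates~$N$, so that the Factor Group Lemma~\cite{wittesurvey} lifts it to a Hamilton cycle of $\Gamma(G,\{a,b\})$. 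If $G$ has no nontrivial cyclic normal subgroup --- as happens, for instance, when $G\cong A_4$ is generated by two elements of order~$3$ whose product has order~$2$ --- one instead builds the spanning cycle by hand: remove one $a$-edge from each $a$-cycle to turn it into a Hamilton path of its coset, and splice these paths into a single spanning cycle through $b$-edges, the matching of the path ends being dictated by $ba=a^{-1}b^{-1}$.

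The step demanding real care is this last global splicing in the direct case: one must ensure that the spanning subgraph assembled from the $a$-paths and the chosen $b$-edges is one cycle and not a disjoint union of shorter cycles --- equivalently, that the ``rotations'' acquired on passing from one $a$-cycle to the next accumulate, around the whole configuration, to an amount coprime to~$n$. Keeping control of this accumulated rotation is exactly where a two-element generating set with $(ab)^2=1$ is tamer than an arbitrary one, and it is the reason the corresponding question for more complicated generating sets --- three involutions without a commuting pair, say --- is still wide open.
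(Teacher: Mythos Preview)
The paper does not prove this theorem. Theorem~\ref{campan} is quoted from Rankin~\cite{campan} purely as motivation for Conjecture~\ref{Conj_campan}; no argument is given or even sketched in the paper, so there is no ``paper's own proof'' to compare your proposal against.

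As to your sketch on its own merits: it is an outline rather than a proof, and the two places you flag as delicate are in fact the entire content of the theorem. In the inductive branch you assert that the $4$-cycles give ``enough freedom'' to arrange that the product of labels along the lifted Hamilton cycle generates the chosen cyclic normal subgroup~$N$; this is exactly the step that must be \emph{proved}, and it does not follow from anything you have written. In the non-cyclic-normal-subgroup branch your own closing paragraph correctly identifies that the splicing of $a$-paths via $b$-edges need not give a single cycle, and you do not supply the argument that controls the accumulated rotation. (Incidentally, your example $G\cong A_4$ does have a nontrivial normal subgroup --- the Klein four-group --- so that case would not even land in your second branch as stated; the dichotomy you set up does not quite partition the problem.) Rankin's original proof proceeds rather differently, via an explicit combinatorial analysis; if you want to complete your approach you will need to fill both gaps with actual arguments.
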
    

\begin{conj}
\label{Conj_campan}
Let~$G = \langle S \rangle$ be a group, where~$S=\{a^\pm,b^\pm \}$ such that~${(ab)^2=1}$ and~${\kappa(\Gamma(G,S)) \geq 2}$.
 Then~$\Gamma(G,S)$ contains a Hamilton circle. 
\end{conj}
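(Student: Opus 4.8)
The plan is to split by the number of ends $e(G)\in\{0,1,2,\infty\}$, exploiting the fact that $c\defi ab$ is an involution, so that $b=a^{-1}c$ and $G=\langle a,c\rangle$ is generated by one element together with an involution; in particular $\Gamma(G,S)$ has maximum degree $4$. First I would clear away the degenerate and low-degree cases. If $a=b^{\pm1}$ then $G$ is cyclic, hence finite or isomorphic to $\Z$; in the latter case the only Cayley graph on $S$ is the double ray, which has connectivity $1$ and is excluded by hypothesis. If exactly one of $a,b$ is an involution, so that $\Gamma(G,S)$ is cubic, then $abab=1$ together with that generator squaring to $1$ forces conjugation by the involution to invert the other generator; thus $G$ is dihedral, and being infinite it would be $\Dinf$. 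If both $a$ and $b$ are involutions then $ab=ba$, so $G$ is finite. After these reductions it remains to treat three situations: (a) $G$ finite; (b) $G\cong\Dinf$ with $\Gamma(G,S)$ cubic; (c) $\Gamma(G,S)$ is $4$-regular and $G$ is infinite.

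In case (a), Rankin's Theorem~\ref{campan} yields a Hamilton cycle of $\Gamma(G,S)$, which is a Hamilton circle under the conventions of Section~\ref{on graphs}. In case (b), $\Gamma(\Dinf,S)$ is the infinite ladder, for which a Hamilton circle is immediate by Lemma~\ref{What is HC} (take the two rails as the two double rays).

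Case (c) I would further split on $e(G)$. If $e(G)=2$, apply Theorem~\ref{stallings} to obtain a finite normal subgroup $C$ of $G$ with $G/C$ isomorphic to $\Z$ or $\Dinf$, and use the cosets of $C$ (or of a finite-index copy of $\Z$) as the finite classes $X_i$, $i\in\Z$, of Lemma~\ref{Cylinder}. I would verify conditions (i)--(iii) of that lemma by a case analysis on whether $S\cap C=\emptyset$, mirroring the proofs of Theorems~\ref{Z_p2} and~\ref{split over 2group}: when $S$ meets $C$ the cycles inside the $X_i$ and the perfect matchings between consecutive ones can be read off from $S$ directly, while when $S\cap C=\emptyset$ one instead builds two disjoint double rays covering all vertices, as in Theorem~\ref{index2 hamilton}.

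The genuine obstacle is the remaining part of case (c): $G$ one-ended (and, to a lesser extent, $G$ infinitely-ended, which I expect to be more accessible through the Bass--Serre structure of $G$). Since every cubic instance collapsed to a dihedral group, Babai's Lemma~\ref{one-ended} and the Georgakopoulos classifications used for Theorem~\ref{inf_rap_k2} (Lemmas~\ref{kapa=2} and~\ref{kapa=3}) do not apply to a $4$-regular $\Gamma(G,S)$. The one-ended groups covered by the statement include the hyperbolic triangle groups $\langle a,b\mid a^p,b^q,(ab)^2\rangle$ with $\tfrac1p+\tfrac1q<\tfrac12$, for which the existence of a Hamilton circle in this Cayley graph is a hard open problem in the spirit of the weak Lov\'asz conjecture; I do not expect the methods of the present paper to settle these. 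A reasonable intermediate target, paralleling the relationship between Conjectures~\ref{Conj_inf_rap} and~\ref{Conj_agelos}, would be the planar one-ended case, which would follow from Conjecture~\ref{Conj_agelos}; the non-planar one-ended case presumably requires genuinely new ideas.
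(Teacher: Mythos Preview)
The statement you are addressing is Conjecture~\ref{Conj_campan}, which the paper explicitly poses as an \emph{open problem}; there is no proof in the paper to compare against. The authors introduce it only as a hoped-for consequence of methods that might eventually settle Conjectures~\ref{Conj_inf_rap} and~\ref{Conj_agelos}.

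Your write-up is therefore best read not as a proof but as an outline of what is known and where the obstruction lies, and in that capacity it is largely accurate. Your reductions in the degenerate and cubic cases are correct: if $a=b^{-1}$ the group is cyclic; if exactly one generator is an involution the relation $(ab)^2=1$ forces a dihedral structure and the infinite instance is $D_\infty$ with the ladder as Cayley graph; if both are involutions the group is a quotient of $\Z_2\times\Z_2$. You also correctly single out the one-ended $4$-regular case, in particular the triangle groups $\langle a,b\mid a^p,b^q,(ab)^2\rangle$, as the genuine obstacle --- which is exactly why the paper leaves this as a conjecture.

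One caution: even your two-ended subcase of~(c) is only a sketch. Invoking Lemma~\ref{Cylinder} requires Hamilton cycles inside the finite pieces $\Gamma[X_i]$ and perfect matchings between consecutive cosets using only the generators in $S$; the proofs of Theorems~\ref{Z_p2} and~\ref{split over 2group} depend on the specific subgroup being split over, and it is not automatic that an arbitrary finite normal $C$ together with $S=\{a^\pm,b^\pm\}$ meets those hypotheses. Similarly, the infinitely-ended case (e.g.\ $G=\Z\ast\Z_2$ with $a$ of infinite order and $c=ab$) is not obviously ``more accessible'': the counterexamples discussed in~\cite{Cayley_HC_M_T} show that multi-ended amalgams can fail to be Hamiltonian. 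So while your diagnosis of the main difficulty agrees with the paper's, the intermediate cases you pass over are not yet theorems either.
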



\section{Generating sets admitting Hamilton circles}
\label{Section_add_gen}
This section has two parts. 
In the first part we study the Hamiltonicity of Cayley graphs obtained by adding a generator to a given generating sets of a group.  
In the second part, we discuss an important theorem called the Factor Group Lemma which plays a key role in studying Hamiltonianicity of finite groups.

\subsection{Adding generators}
Fleischner proved in~\cite{Fleischner} that the square of every 2-connected finite graph has a Hamilton cycle. 
Georgakopoulos~\cite{AgelosFleisch} has extended this result to Hamilton circles in locally finite 2-connected graphs.
This result implies the following corollary: 

\begin{coro}\textnormal{\cite{AgelosFleisch}}
\label{agelos}
Let~$G=\langle S \rangle$ be an infinite group such that~$\Gamma(G,S)$ is two connected then~${\Gamma(G,S \cup S^2)}$ contains a Hamilton circle.\qed
\end{coro}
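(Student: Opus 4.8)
The plan is to recognize the graph $\Gamma(G,S\cup S^2)$ as the square of $\Gamma(G,S)$ and then to invoke the Fleischner-type theorem of Georgakopoulos quoted in~\cite{AgelosFleisch}.

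Recall that the \emph{square} $\Gamma^2$ of a graph $\Gamma$ has vertex set $V(\Gamma)$, with two distinct vertices $u,v$ joined by an edge exactly when $d_\Gamma(u,v)\le 2$. First I would verify the equality $\Gamma(G,S)^2=\Gamma(G,S\cup S^2)$, with the convention that the identity is deleted from $S^2$ if it occurs there (it contributes no edge to either graph, and it occurs precisely because $S$ is symmetric). Both graphs have vertex set $G$. If $g$ and $gt$ are joined in $\Gamma(G,S\cup S^2)$, then $t\in S$ or $t=ss'$ for some $s,s'\in S$ with $ss'\neq 1$, so $d_{\Gamma(G,S)}(g,gt)\in\{1,2\}$ and $\{g,gt\}$ is an edge of $\Gamma(G,S)^2$. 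Conversely, if $1\le d_{\Gamma(G,S)}(g,h)\le 2$, then $g^{-1}h=s$ or $g^{-1}h=ss'$ with $s,s'\in S$, hence $g^{-1}h\in S\cup S^2$ and $\{g,h\}$ is an edge of $\Gamma(G,S\cup S^2)$.

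With this identification the statement follows immediately. As $S$ is finite, $\Gamma(G,S)$ is locally finite, and by hypothesis it is two-connected. By Georgakopoulos's extension of Fleischner's theorem~\cite{AgelosFleisch} --- the square of a locally finite two-connected graph has a Hamilton circle --- the graph $\Gamma(G,S)^2=\Gamma(G,S\cup S^2)$ contains a Hamilton circle.

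I expect no real obstacle here: the content is entirely in~\cite{AgelosFleisch}, and the only thing to be careful about is translating the labelled-edge notation for Cayley graphs of Section~\ref{on graphs} into the plain graph-theoretic hypothesis of that reference, together with the harmless bookkeeping that $1\in S^2$ must be ignored.
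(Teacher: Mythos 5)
Your proof is correct and is exactly the derivation the paper intends: it quotes Georgakopoulos's extension of Fleischner's theorem (the square of a locally finite $2$-connected graph has a Hamilton circle) and observes that $\Gamma(G,S\cup S^2)$ is the square of $\Gamma(G,S)$. Your explicit verification of that identification, including the remark about discarding $1\in S^2$, is careful bookkeeping that the paper leaves implicit.
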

%
In the following we extend the idea of adding generators to obtain a Hamilton circle in the following manner. 
We show in Theorem~\ref{add_a} that under certain conditions, it suffices to add just a single new generator instead of adding an entire set of generators to obtain a Hamilton circle in the Cayley graph. 

\begin{theorem}
	\label{add_a}
	Let~$G=\langle S\rangle$ be a group with a normal subgroup~${H = \langle a \rangle}$ such that~$\Gamma(\overline{G},\overline{S}\setminus \{H\})$ has a Hamilton cycle.
	Then~${\Gamma(G, S \cup \{a^\pm\})}$ is Hamiltonian.  
\end{theorem}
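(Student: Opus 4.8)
The plan is to exploit the coset structure of $H=\langle a\rangle$ in $G$ together with the given Hamilton cycle of $\overline G=G/H$, reducing the problem either to Lemma~\ref{Cylinder} or to producing two spanning double rays as in Lemma~\ref{What is HC}. First a reduction: since a Hamilton cycle is by definition finite, the hypothesis forces $\overline G$ to be a finite group, say of order $k\ge 3$; hence $H$ has finite index in $G$, so $G$ is finite (if $o(a)<\infty$) or, since then $H\cong\Z$ has finite index, two-ended by Theorem~\ref{stallings}. Fix a Hamilton cycle $[\,\overline{x_1},\dots,\overline{x_k}\,]$ of $\overline G$ with each $\overline{x_i}\in\overline S\setminus\{H\}$, lift the $\overline{x_i}$ to elements $x_i\in S\setminus H$, and set $g_i:=x_1\cdots x_{i-1}$, so that $1=g_1,g_2,\dots,g_k$ is a transversal of $H$ in $G$. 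Write $x_1\cdots x_k=a^{j}$ for the resulting integer $j$ (read modulo $o(a)$ when $a$ has finite order), and record for each $i$ the automorphism $x_i^{-1}ax_i=a^{d_i}$ of $H$; note that $d_1d_2\cdots d_k\equiv1$.

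Next I would isolate the following subgraph $\Lambda$ of $\Gamma(G,S\cup\{a^{\pm}\})$: its vertex set is all of $G$, and its edges are the $a$-labelled edges (which, inside each coset $g_iH$, form a cycle of length $o(a)$, or a double ray when $o(a)=\infty$), the $x_i$-labelled edges for $i=1,\dots,k-1$, and the $x_k$-labelled edges. A short computation with the identities $a^{m}x_i=x_ia^{d_im}$ shows that, identifying $g_ia^{m}$ with the pair $(m,i)\in\Z\times\{1,\dots,k\}$, the $x_i$-edges with $i<k$ join $(m,i)$ to $(m,i+1)$, while the $x_k$-edges join $(m,k)$ to $(m+j,1)$; moreover a left translation by $g_i$ reverses the two ends of $\Gamma(G,S\cup\{a^{\pm}\})$ exactly when $d_1\cdots d_{i-1}=-1$, so the twist coming from the $d_i$ stays consistent with the end structure. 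Thus $\Lambda$ is a ``ladder'' of $k$ columns closed up with a level shift of $j$, and it suffices to find a Hamilton cycle (finite case) or a Hamilton circle (two-ended case) of $\Lambda$.

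In the finite case this is exactly the situation of the Factor Group Lemma: starting at $1$, following $x_1,\dots,x_k$ repeatedly produces $\gcd(o(a),j)$ disjoint cycles covering $V(G)$, and stitching them together along the unused $a$-edges inside a single coset yields a Hamilton cycle. In the two-ended case, when $j=0$ the partition of $V(G)$ into the ``levels'' $X_m=\{(m,1),\dots,(m,k)\}$ satisfies the hypotheses of Lemma~\ref{Cylinder} — each $\Gamma[X_m]$ contains the cycle $(m,1)(m,2)\cdots(m,k)(m,1)$, consecutive levels are joined by the perfect matching $(m,i)\leftrightarrow(m+1,i)$ of $a$-edges, and the interaction range is bounded since $S$ is finite — giving a Hamilton circle; when $j\ne0$ the columns assemble, through the diagonal $x_k$-rungs, into $|j|$ disjoint spanning double rays, each with a tail in both ends of $G$ (this is where the end-reversal remark above is used), which after rerouting along the $a$-edges can be brought into the form of exactly two spanning double rays with tails in both ends, so Lemma~\ref{What is HC} applies. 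The main obstacle, and essentially the only real work, is the bookkeeping forced by the conjugation twist $a\mapsto a^{d_i}$ together with the corner cases in the number of double rays when $j\ne 0$ — in particular, showing that one can always reduce to precisely two using the twisted-ladder structure of $\Lambda$; checking the degree and connectivity conditions of Lemma~\ref{What is HC Inf} along the way is then routine.
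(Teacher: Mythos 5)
Your reduction to the twisted--ladder picture is essentially the paper's setup and is sound: normality of $H=\langle a\rangle$ forces $a^{x_i}=a^{\pm1}$, the cosets $g_iH$ become columns joined by rungs, and both your $j=0$ case (Lemma~\ref{Cylinder} applied to the levels $X_m$) and your $|j|=2$ case (two diagonal spanning double rays, Lemma~\ref{What is HC}) go through. The gap is the sentence asserting that for general $j\neq 0$ the $|j|$ diagonal double rays ``after rerouting along the $a$-edges can be brought into the form of exactly two spanning double rays.'' This is not bookkeeping; the natural surgery fails. The cycle-merging trick behind the Factor Group Lemma works because a cycle minus an edge is a single path, so exchanging a parallel pair of rungs between two disjoint cycles welds them into one cycle. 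A double ray minus an edge is \emph{two} rays; hence exchanging a parallel pair of rungs between two disjoint spanning double rays always returns two spanning double rays, and performing such an exchange inside a single double ray returns a single double ray. The number of double-ray components of your spanning $2$-regular subgraph is therefore invariant under every such local exchange, so no finite amount of rerouting turns $|j|=1$ (a Hamilton double ray, which is not a Hamilton circle) or $|j|\geq 3$ into the required two components. Since $j$ is dictated by the chosen lifts $x_i$ and cannot be normalised to $\pm2$ in general, the case you yourself identify as ``essentially the only real work'' is exactly the case your argument does not cover.

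The paper sidesteps this by never using the closing rungs $x_k$ at all: it keeps the column $\langle a\rangle$ itself as one spanning double ray $R_1$, and covers the remaining $k-1$ columns by a single boustrophedon double ray $R_2$, namely the union of $x_1[x_2,\dots,x_{k-1},a,x_{k-1}^{-1},\dots,x_2^{-1},a]^{\N}$ with its mirror image $x_1[a^{-1},x_2,\dots,x_{k-1},a,x_{k-1}^{-1},\dots,x_2^{-1},a]^{\N}$; this is independent of $j$ and immediately yields the two double rays needed for Lemma~\ref{What is HC}. You would need to replace your $j\neq0$ argument by a construction of this kind. Two smaller points: the paper's proof (and the intended statement) takes $o(a)=\infty$, so $G$ is two-ended; your finite-order digression is harmless but should then exhibit the parallel rung pairs used to stitch the $\gcd(o(a),j)$ cycles together. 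And your claim that the $x_i$-rungs join $(m,i)$ to $(m,i+1)$ is only true after reparametrising column $i$ by the sign $d_1\cdots d_{i-1}$, which you gesture at but should make explicit.
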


\begin{proof}
We first notice that because~$\overline{G}$ contains a Hamilton cycle,~$G$ contains a cyclic subgroup of finite index and Theorem~\ref{stallings} implies that~$G$ is two-ended.
We  set~${\Gamma \defi \Gamma(G, S \cup \{a^{\pm}\})}$.
Assume that~${C =H [x_1,\ldots, x_n]}$ be the Hamilton cycle of~$\Gamma(\overline{G},\overline{S}\setminus \{H\})$. 
	As~$G$ is two-ended, we only need to find two disjoint double rays which together span~$\Gamma$ such that for every finite set~$X\subset V(\Gamma)$ each of those rays has a tail in each infinite component of~$\Gamma \setminus X$.
	By the structure of~$G$ we can write
	$$G =  \langle a \rangle \sqcup \bigsqcup_{i=1}^{n-1} \left ( \left ( \Pi_{j=1}^i x_j  \right ) \langle a \rangle \right ).$$
Let~$\Gamma^\prime$ be the subgraph of~$\Gamma$ induced by~$\bigsqcup_{i=1}^{n-1}   \left ( \Pi_{j=1}^i x_j   \right ) \langle a \rangle$.
We now show that there is a double ray~$R$ spanning~$\Gamma^\prime$ that has a tail belonging to each end.
Together with the double ray generated by~$a$ this yields a Hamilton circle. 
To find~$R$ we will show that there is a ``grid like" structure in~$\Gamma^\prime$.
One might picture the edges given by~$a$ as horizontal edges and we show that the edges given by the~$x_i$ are indeed vertical edges yielding a ``grid like'' structure. 

We claim that each~$x_i$ either belongs to~$C_G(a)$, i.e.~${a x_i = x_i a}$, or that we have the equality~${ax_i=x_i a^{-1}}$.
By the normality of~$\langle a\rangle$, we have~${a^g\in \langle a \rangle}$ for all~${g \in G}$.
In particular, if the order of~$x_i$ is not two, then we can find~${\ell,k\in \Z\setminus \{0\}}$ such that~$a^{(x_i^{-1})} = a^k$ and~$a^{x_i} = a^\ell$.\footnote{$a^{(x_i^{-1})} = x_i a x_i^{-1}  = a^k \Rightarrow a = x_i^{-1} a^k x_i = (x_i^{-1} a x_i)^k$ and with~$x_i^{-1} a x_i=a^{x_i} = a^\ell$ this implies~${a = (a^\ell)^k = a^{\ell k} \Rightarrow 1 = a^{\ell k -1.}}$}
Hence we deduce that~${1  =  a^{\ell k -1}}$.
It implies that~$k= \ell = \pm 1$ for each~$i$.  
For the sake of simplicity, we assume that~${k = \ell =1}$ for all~$i$. 
The other cases are totally analogous, we only have to switch from using~$a$ to~$a^{-1}$ in the appropriate coset in the following argument. 

Now suppose that~$o(x_i)=2$.
Since~$\langle a \rangle$ is a normal subgroup, then there exists~$j\in\Z$ such that~$a=x_ia^jx_i$.
In other words, we have~$a=(x_iax_i)^j$ and we again plug~$a$ in the right side of the preceding equality.
Thus the equation~$a=a^{j^2}$ is obtained.
Therefore~$j=\pm 1$, as the order of~$a$ is infinite and so the claim is proved.

Now we are ready to define the two double rays, say~$R_1$ and~$R_2$, which yield the desired Hamilton circle. 
For~$R_1$ we take~$\langle a \rangle$. 
To define~$R_2$ we first define a ray~$R_2^+$ and~$R_2^-$ which each starting in~$x_1$. 
Let 
\begin{align*}
	R_2^+ &\defi x_1[x_2,\ldots,x_{n-1},a,x_{n-1}^{-1},\ldots, x_2^{-1},a]^\bbN \\
	R_2^- &\defi x_1[a^{-1},x_2,\ldots,x_{n-1},a,x_{n-1}^{-1},\ldots, x_2^{-1},a]^\bbN \\
\end{align*}
By our above arguments, all those edges exist and we define~${R_2 \defi R_2^+ \cup R_2^-}$. 
By construction it is clear that~$R_1 \cap R_2 = \emptyset$ and~$V(\Gamma) \subseteq R_1 \cup R_2$. 
It also follows directly from construction that for both ends of~$G$ there is  a tail of~$R_i$ that belongs to that end.\end{proof}
Under the assumption that the Weak Lov\'{a}sz Conjecture holds true for finite Cayley graphs, we can reformulate Theorem~\ref{add_a} in the following way: 
\begin{coro}
	\label{add_a_with_Lovazs}
	For any two-ended group~$G=\langle S  \rangle$ there exists an~${a \in G}$ such that~$\Gamma(G, S\cup \{a^\pm\})$ contains a Hamilton circle.\footnote{This corollary remains true even if we only assume that every finite group contains a Hamilton path instead of a Hamilton cycle.}  
\end{coro}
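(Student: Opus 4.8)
The plan is to read this off Theorem~\ref{add_a}. By that theorem it is enough to produce inside the two-ended group $G=\langle S\rangle$ a \emph{normal} infinite cyclic subgroup $H=\langle a\rangle$ of finite index such that $\overline G\defi G/H$ has at least three elements: indeed, then $\overline S\setminus\{H\}$ is a finite generating set of the finite group $\overline G$, the Weak Lov\'{a}sz Conjecture supplies a Hamilton cycle in $\Gamma(\overline G,\overline S\setminus\{H\})$, and Theorem~\ref{add_a} gives that $\Gamma(G,S\cup\{a^{\pm}\})$ is Hamiltonian, which is exactly the claim.

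To find such an $H$, I would start from Theorem~\ref{stallings}(ii): $G$ has an infinite cyclic subgroup $H_0$ of finite index $n\defi[G:H_0]$. Its core $\mathsf{Core}(H_0)=\bigcap_{g\in G}H_0^{\,g}$ is normal in $G$ and of index at most $n!$ in $G$ (see \cite[Theorem 3.3.5]{scott}), hence of finite index; since $G$ is infinite this forces $\mathsf{Core}(H_0)$ to be a nontrivial subgroup of $H_0\cong\Z$, so it is infinite cyclic, say $\mathsf{Core}(H_0)=\langle a_0\rangle$. Because $\langle a_0\rangle$ is normal, any conjugate $g a_0 g^{-1}$ lies in $\langle a_0\rangle$ and, comparing the cyclic subgroup it generates with $\langle a_0\rangle$, must equal $a_0$ or $a_0^{-1}$; hence for every $k\ge1$ the subgroup $\langle a_0^{\,k}\rangle$ is again normal in $G$, with finite index $k\cdot[G:\langle a_0\rangle]$. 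Now set $a\defi a_0$ if $[G:\langle a_0\rangle]\ge3$ and $a\defi a_0^{\,3}$ otherwise; then $H\defi\langle a\rangle$ is a normal infinite cyclic subgroup of finite index with $|\overline G|\ge3$, and $\overline S\setminus\{H\}$ still generates $\overline G=G/H$ since deleting the identity coset changes nothing. This is all that was needed above.

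I do not expect a genuine obstacle; the only steps requiring a little care are the passage to $a_0^{\,3}$, which disposes of the degenerate quotients $\overline G\in\{1,\Z_2\}$ while preserving normality and finiteness of the index, and checking that the set handed to the Weak Lov\'{a}sz Conjecture really generates $\overline G$. For the strengthening mentioned in the footnote, I would reexamine the proof of Theorem~\ref{add_a}: the snake-shaped double ray constructed there uses only the edges $x_1,\dots,x_{n-1}$ of the Hamilton cycle $C=H[x_1,\dots,x_n]$ of $\overline G$ (together with the edges labelled $a$), so a Hamilton \emph{path} of $\overline G$ already drives the argument, and the whole proof goes through with ``Hamilton cycle'' replaced by ``Hamilton path''.
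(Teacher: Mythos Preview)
Your proof is correct and follows essentially the same route as the paper: obtain an infinite cyclic subgroup of finite index from Theorem~\ref{stallings}, pass to its core to secure normality, and then invoke Theorem~\ref{add_a} together with the assumed Weak Lov\'{a}sz Conjecture. The paper's proof is terser and does not explicitly address the requirement $|\overline G|\ge 3$ needed for the Weak Lov\'{a}sz Conjecture, nor does it spell out why the core remains infinite cyclic; your passage to $\langle a_0^{\,3}\rangle$ and your justification that conjugation maps $a_0\mapsto a_0^{\pm1}$ fill in these small gaps cleanly, and your reading of the footnote is accurate.
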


\begin{proof}
	It follows from Theorem \ref{stallings} that~$G$ has a subgroup of finite index which is isomorphic to~$\Z$.
	We denote this subgroup by~$H$.
	If~$H$ is not normal, then we substitute~$H$ with~$\mathsf{Core}(H)$ which has a finite index as well.
	Now we are ready to invoke Theorem \ref{add_a} and we are done.
\end{proof}	 

\begin{coro}
	Let~$G = \langle S \rangle$ be a group and let~$G^\prime \cong \bbZ$ have a finite index in~$G$.
	Then there exists an element~$a \in G$ such that~$\Gamma(G,S\cup \{a^\pm \})$ has a Hamilton circle.\qed  
\end{coro}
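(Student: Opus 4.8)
The plan is to reduce this corollary directly to Corollary~\ref{add_a_with_Lovazs}, or more precisely to its engine, Theorem~\ref{add_a}. The statement is essentially Corollary~\ref{add_a_with_Lovazs} with the two-endedness hypothesis replaced by the explicit hypothesis that $G$ contains a copy of $\bbZ$ of finite index; the point is that this hypothesis already forces $G$ to be two-ended (by Theorem~\ref{stallings}(ii)$\Rightarrow$(i)), so the only real work is arranging a \emph{normal} infinite cyclic subgroup to which Theorem~\ref{add_a} can be applied.

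The key steps, in order, would be: First, observe that by Theorem~\ref{stallings} the hypothesis that $G^\prime \cong \bbZ$ has finite index makes $G$ two-ended. Second, pass from $G^\prime$ to a normal infinite cyclic subgroup: replace $G^\prime$ by $\mathsf{Core}(G^\prime) = \bigcap_{g \in G} (G^\prime)^g$, which is normal in $G$ by definition and has finite index in $G$ since $[G:G^\prime]$ is finite (this is exactly the $\mathsf{Core}$ fact recalled in Section~\ref{on groups}); moreover $\mathsf{Core}(G^\prime)$ is a subgroup of $G^\prime \cong \bbZ$, hence is itself infinite cyclic (a finite-index subgroup of $\bbZ$ is nontrivial cyclic). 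Write $H = \mathsf{Core}(G^\prime) = \langle a \rangle$. Third, consider the quotient $\overline{G} = G/H$, which is finite, and its Cayley graph with respect to $\overline{S} \setminus \{H\}$; invoking the Weak Lov\'asz Conjecture for this finite group (or, as the footnote to Corollary~\ref{add_a_with_Lovazs} notes, merely a Hamilton path, which still suffices for the argument in Theorem~\ref{add_a}) gives a Hamilton cycle of $\Gamma(\overline{G}, \overline{S} \setminus \{H\})$. Fourth, apply Theorem~\ref{add_a} with this $H = \langle a \rangle$ to conclude that $\Gamma(G, S \cup \{a^\pm\})$ is Hamiltonian, i.e.\ contains a Hamilton circle (it cannot contain a Hamilton cycle since $G$ is infinite).

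There is essentially no hard part here: this corollary is a packaging of the preceding two corollaries, and the proof is three lines. The only point requiring a small amount of care is the passage to $\mathsf{Core}(G^\prime)$ and checking it remains infinite cyclic and normal of finite index --- but this is routine group theory already used verbatim in the proof of Corollary~\ref{add_a_with_Lovazs}. One could equally well phrase the whole thing as ``$G$ is two-ended by Theorem~\ref{stallings}, so apply Corollary~\ref{add_a_with_Lovazs}'', since the core-taking step is internal to that corollary's proof; I would state it that way for brevity, perhaps adding the one-line remark that a finite-index subgroup of $\bbZ$ is again $\bbZ$ so that the reader sees why $\mathsf{Core}(G^\prime)$ is still of the required form.
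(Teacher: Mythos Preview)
Your proposal is correct and matches what the paper leaves implicit with its bare \qed: the corollary is an immediate consequence of Theorem~\ref{add_a} (equivalently, of Corollary~\ref{add_a_with_Lovazs}), and the paper gives no separate argument.

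One remark on notation, however: in this paper $G'$ is reserved for the commutator subgroup (see Section~\ref{on groups}), not for an arbitrary subgroup. Under that reading your $\mathsf{Core}$ step is unnecessary, since the commutator subgroup is automatically normal (indeed characteristic). More importantly, $G/G'$ is then a finite \emph{abelian} group, and it is a classical theorem (not a conjecture) that every Cayley graph of a finite abelian group on at least three elements is Hamiltonian. So one can feed this Hamilton cycle directly into Theorem~\ref{add_a} and obtain the conclusion \emph{unconditionally}, without appealing to the Weak Lov\'asz Conjecture. This is presumably why the authors bother to record the corollary separately from Corollary~\ref{add_a_with_Lovazs}, which is explicitly conditional. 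Your argument is not wrong, but by routing through Lov\'asz you are proving a weaker (conditional) statement than the one intended.
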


One might be interested in finding a small generating set for a group such that the Cayley graph with respect to this generating set is known to contain a Hamilton cycle or circle. 
For finite groups this was done by Pak and Radoi\^{c}i\`c. 
\begin{theorem}
\label{pak}
\textnormal{\cite[Theorem 1]{HP_Cayley_Pak}}
Every finite group~$G$ of size~$|G| \geq 3$ has a generating  set~$S$ of size~$|S| \leq \log_2 |G|$, such that~$\Gamma(G,S)$ contains a Hamilton cycle. 
\end{theorem}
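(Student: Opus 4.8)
The plan is to produce an \emph{irredundant generating sequence} $g_1,\ldots,g_k$ of $G$ whose Cayley graph is Hamiltonian, and to observe that the size bound is then essentially free. Call a sequence irredundant if the subgroups $H_i \defi \langle g_1,\ldots,g_i\rangle$ form a strictly increasing chain $\{1\}=H_0 < H_1 < \cdots < H_k = G$. Since each proper inclusion forces $|H_i|\geq 2|H_{i-1}|$, we get $|G|\geq 2^k$, hence $k\leq \log_2|G|$; and the $g_i$ are distinct, so $S\defi\{g_1,\ldots,g_k\}$ is a generating set with $|S|\leq\log_2|G|$. Thus the entire content of the theorem is the \emph{Hamiltonicity} part: the counting is automatic, and all the work goes into choosing the $g_i$ so that $\Gamma(G,S)$ has a Hamilton cycle with only one new generator added at each stage.

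First I would set up an induction on $|G|$, strengthening the hypothesis to \emph{Hamilton paths with a controllable endpoint} rather than cycles. Concretely, the induction hypothesis I would carry is: for each group $K$ with $3\leq |K| <|G|$ there is an irredundant generating sequence $T$ of $K$ with $|T|\leq\log_2|K|$ such that $\Gamma(K,T)$ contains a Hamilton path whose initial vertex is $1$ and whose terminal vertex I am free to constrain within some coset. The flexibility of endpoints is what makes the merging step tractable, and it matches the footnote in the excerpt that the path version already suffices for the corollaries. Base cases are cyclic groups $\Z_n$ ($n\geq 3$), where the single generator gives an $n$-cycle, together with the finitely many small groups handled directly.

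The heart of the argument is an \emph{extension lemma}: given $\Gamma(H,T)$ with a Hamilton path as above, produce $g\notin H$ with $\langle H,g\rangle = K$ so that $\Gamma(K,T\cup\{g\})$ is Hamiltonian. Here I would use the coset structure. Left multiplication by $g$ is a graph automorphism of $\Gamma(K,T\cup\{g\})$, so each left coset $g^iH$ induces a subgraph isomorphic to $\Gamma(H,T)$ and therefore inherits a Hamilton path via the translate of $T$'s path; the $g$-edges form a matching between consecutive cosets. I would then build a \emph{serpentine} path: traverse $H$ by its Hamilton path, cross to $gH$ along a matching edge, traverse $gH$ by a suitably rerouted Hamilton path entering at the matched vertex and exiting adjacent to $g^2H$, and continue through all cosets $H, gH, \ldots, g^{m-1}H$ before closing up (possible when $g$ cyclically permutes these cosets, e.g.\ when $H\lhd K$ with $K/H$ cyclic, or when $g$ has index $2$). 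Stitching the translated Hamilton cycles of two cosets into one cycle of $K$ amounts to finding an edge $\{u,v\}$ of the cycle in one coset whose matched pair $\{ug,vg\}$ is an edge of the cycle in the next coset.

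The main obstacle is exactly this stitching/routing step: there is no reason two independently chosen Hamilton cycles in adjacent cosets share a compatible matched edge, since conjugation by $g$ scrambles the cycle, and in general a graph can even have edge-disjoint Hamilton cycles. Overcoming this is where the choice of $g_i$ and the path-endpoint flexibility must be used in tandem: I would select each $g_i$ so that the coset permutation it induces is a single cycle (favouring steps of index $2$, or cyclic quotients coming from a well-chosen subnormal chain), and use the freedom to reroute Hamilton paths between prescribed entry and exit vertices within each coset so that the matching edges line up. The delicate case is a group with no convenient index-$2$ or cyclic-quotient step, where one must argue more carefully that \emph{some} admissible $g$ exists that both keeps the sequence irredundant and admits a compatible serpentine routing; this is the technical core of the Pak--Radoi\^ci\`c construction and the place where a full proof would require the bulk of its case analysis.
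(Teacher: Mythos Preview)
The paper does not prove this statement: Theorem~\ref{pak} is quoted from Pak and Radoi\v{c}i\'c \cite{HP_Cayley_Pak} and used as a black box in the proof of Theorem~\ref{inf_pak}. There is therefore no ``paper's own proof'' to compare your proposal against.

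That said, your outline is a reasonable summary of the strategy in the original Pak--Radoi\v{c}i\'c paper: build an irredundant generating sequence so that the size bound $|S|\leq\log_2|G|$ is automatic, and prove Hamiltonicity inductively via an extension lemma that stitches coset translates of a Hamilton path/cycle through matching edges. You correctly identify the genuine difficulty as the stitching step and the case where no convenient index-$2$ or cyclic-quotient extension is available. However, your proposal stops precisely where the actual work begins: you defer the ``technical core'' and ``bulk of the case analysis'' without indicating how it is resolved. In the original paper this is handled by a careful choice of the subgroup chain together with an explicit rerouting argument, and it is not obvious from your sketch that such a choice always exists. So as written this is an accurate high-level plan but not a proof; the gap is exactly the extension lemma in the non-cyclic-quotient case, which you name but do not close.
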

A problem with extending Theorem~\ref{pak} to infinite groups is that having a generating set of size at most~$\log_2$ of the size of the group is no restriction if the group is infinite. 
We only consider context-free groups and prevent the above problem by considering the index of the free subgroups in those context-free groups to obtain a finite bound for the size of the generating sets, see Theorem~\ref{inf_pak} for the details.  
Before we extend Theorem~\ref{pak} to infinite graphs we need some more lemmas. 
In the following we give an extension of Lemma~\ref{Cylinder} from two-ended graphs to graphs with arbitrarily many ends. 

\begin{lemma}
	\label{Inf Cylinder}
Let~$\Gamma^\prime$ be an infinite graph and let~$\Ccal^\prime$ be a Hamilton circle of~$\Gamma^\prime$. 
Let~$\Gamma$ be a graph fulfilling the following conditions:
	\begin{enumerate}[\rm (i)]
	\item~${V(\Gamma) = \Sqcup_{i=1}^k V(\Gamma_i^\prime)}$ where~$\Gamma_i^\prime$ are~$k$ pairwise disjoint copies of~$\Gamma^\prime$, with say~${0 < i \leq k}$.
	
	\item~$\Sqcup_{i=1}^k E(\Gamma_i^\prime) \subseteq E(\Gamma)$.
	
\item Let~$\Phi$ be the  natural projection of~$V(\Gamma)$ to~$V(\Gamma^\prime)$ and set~$[v]$ to be the set of vertices in~$\Gamma$ such that~$\Phi$ maps them to~$v$. 
Then for each vertex~$v^\prime$ of~$\Gamma^\prime$ there is
\begin{enumerate}[\rm(a)]
\item an edge between the two vertices in~$[v]$ if~$k=2$, or
\item a cycle~$C_v$ in~$\Gamma$ consisting exactly of the vertices~$[v]$ if~$k \geq 3$.
\end{enumerate}

\item There is a~$j \in \bbN$ such that in~$\Gamma$ there is no edge between vertices~$v$ and~$w$ if~$d_{\Gamma'}(\Phi(v),\Phi(w)) \geq j$.
		
	\end{enumerate}
	Then~$\Gamma$ has a Hamilton circle.
\end{lemma}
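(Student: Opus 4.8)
The plan is to construct a spanning subgraph $\mathcal{C}$ of $\Gamma$ and to verify it is a Hamilton circle using Lemma~\ref{What is HC Inf}: since $\mathcal{C}$ will span $\Gamma$, its closure in $|\Gamma|$ contains every vertex and every end of $\Gamma$, so it suffices to arrange that $\mathcal{C}$ has degree two at every vertex and every end of $\Gamma$ and that $\overline{\mathcal{C}}$ is topologically connected. Two observations set up the construction. First, conditions~(iii) and~(iv) make the projection $\Phi$ extend to a bijection between the ends of $\Gamma$ and the ends of $\Gamma'$: the fibres $[v]$ are finite and connected, and since by~(iv) every edge of $\Gamma$ joins vertices whose $\Gamma'$-images lie at distance less than $j$, the $\Phi$-preimage of a sufficiently deep component of $\Gamma'\setminus N$ is again a connected deep region of $\Gamma$, so ends neither split nor merge. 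Second, applying Lemma~\ref{What is HC Inf} inside $\Gamma'$ to $\mathcal{C}'$ shows that, regarded as a subgraph, $\mathcal{C}'$ is a disjoint union of double rays whose tails realise every end of $\Gamma'$ with multiplicity exactly two, and these double rays are linked cyclically through those ends; in particular, around each end $\omega'$ there are arbitrarily deep regions $D'$ in which $\mathcal{C}'\cap D'$ is just two disjoint rays $\alpha_{\omega'},\beta_{\omega'}$, both converging to $\omega'$ and together spanning $D'$.

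For each double ray $R$ of $\mathcal{C}'$, let $P_R$ be the ``prism over $R$'': the subgraph of $\Gamma$ on the $k$ lifts of $R$ together with the fibre edge (if $k=2$) or fibre cycle (if $k\ge 3$) at each vertex of $R$, so $P_R\cong R\times K_2$, respectively $R\times C_k$. The double rays $R$ partition $V(\Gamma')$, hence the $P_R$ partition $V(\Gamma)$, and each $P_R$ satisfies the hypotheses of Lemma~\ref{Cylinder} with the fibres along $R$ as slabs; so $P_R$ has a Hamilton circle, which by Lemma~\ref{What is HC} is $S^1_R\sqcup S^2_R$, a disjoint pair of double rays each with a tail in each of the two ends of $P_R$ --- and by the first observation those are the two ends of $\Gamma$ to which the tails of $R$ converge (possibly a single end). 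Put $\mathcal{C}_0:=\bigcup_R\bigl(S^1_R\cup S^2_R\bigr)$. Then $\mathcal{C}_0$ is spanning and has degree two at every vertex of $\Gamma$, but at each end $\omega$ of $\Gamma$, with $\omega'=\Phi(\omega)$, it has degree \emph{four}: the two rays of $\mathcal{C}'$ at $\omega'$ belong to one or two of the double rays $R$, and each such $R$ sends two tails to $\omega$ through $S^1_R$ and $S^2_R$. The remainder of the argument repairs this one end at a time. Fix $\omega$ and a deep region $D$ of $\omega$ lying over a region $D'$ of $\omega'$ as above; then $V(D)$ is the disjoint union of the vertex sets of the half-prisms $\alpha_{\omega'}\times C_k$ and $\beta_{\omega'}\times C_k$, and $\mathcal{C}_0\cap D$ consists of four disjoint rays, two running in each half-prism and together spanning $D$. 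Each half-prism, for every $k\ge 2$, admits a spanning ray obtained by a boustrophedon through the $k$ copies (out along copy $1$, back along copy $2$, and so on) with a single finite endpoint at the shallow side. Using these I would reroute $\mathcal{C}_0$ inside $D$ so that only two tails reach $\omega$ while every vertex of $D$ keeps degree two, and reconnect the two strands of $\mathcal{C}_0$ that this cuts off near the boundary of $D$; since those two strands generically belong to different $P_R$'s, the reroute also welds pieces of $\mathcal{C}_0$ together. Carrying this out at every end produces $\mathcal{C}$, and the cyclic linkage of $\mathcal{C}'$'s double rays through the ends is then used to check that $\overline{\mathcal{C}}$ is connected and that every vertex and every end has degree two, so Lemma~\ref{What is HC Inf} applies.

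The step I expect to be the main obstacle is exactly this last rerouting, both locally and globally. Locally, because $\mathcal{C}_0$ is spanning one cannot run a new path through fresh vertices, so the two strands cut off at $\omega$ must be reattached using only an edge or a fibre-cycle arc near $\partial D$; making this possible forces one to choose the Hamilton circles $S^i_R$ of the $P_R$ (for which Lemma~\ref{Cylinder} leaves room) together with the depth of $D$ so that the relevant boundary vertices end up adjacent. Globally, when $\Gamma$ has only finitely many ends one may take the regions $D$ pairwise disjoint and the reroutes are independent, leaving routine bookkeeping; but in the intended application to context-free groups $\Gamma$ has infinitely many ends, which accumulate, so the regions cannot be made disjoint. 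Then one must fix a defining sequence for each end and perform the reroutes along a nested, tree-indexed family of finite cuts, checking at each finite stage that the finitely many cut-edges keep under control the three requirements of Lemma~\ref{What is HC Inf} --- degree two at vertices, degree two at the ends treated so far, and connectedness of the partial closure --- and that no loose strand remains unmatched in the limit.
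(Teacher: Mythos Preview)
Your construction takes a detour that creates the very problem you then struggle to fix. By invoking Lemma~\ref{Cylinder} on each prism $P_R\cong R\times C_k$ you produce \emph{two} spanning double rays $S^1_R,S^2_R$ per prism, and this is what forces degree four at every end and then the delicate rerouting programme you describe. The paper avoids all of this with one observation: each prism $P_R$ admits a single spanning \emph{double ray}, not a circle. Writing $R=\ldots,r_{-1},r_0,r_1,\ldots$ and $r_i^1,\ldots,r_i^k$ for the fibre over $r_i$, one snakes through the prism by alternating a matching edge with an almost-complete turn around the fibre cycle: from $r_0^1$ go to $r_1^1$, then traverse $C_{r_1}$ to its last vertex $r_1^\ell$ before $r_1^1$, take the matching edge to $r_2^\ell$, traverse $C_{r_2}$, and so on in both directions. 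This is exactly the boustrophedon you mention for half-prisms, but applied to the whole of $R$ from the start.

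With this change the collection $\mathcal{R}$ of these single double rays is already spanning with vertex-degree two, and since each $R'$ of $\mathcal{C}'$ contributes one double ray with one tail toward each tail of $R'$, the end-degree of $\mathcal{R}$ equals the end-degree of $\mathcal{C}'$, namely two. No rerouting is needed at any end, and in particular the infinitely-ended case (which you correctly flag as the hard part of your plan) disappears. Topological connectedness then follows because $\mathcal{R}$ meets every finite cut of $\Gamma$: such a cut projects to a finite cut of $\Gamma'$, which $\mathcal{C}'$ meets. So your end-bijection argument and your use of Lemma~\ref{What is HC Inf} are fine; the missing idea is simply to aim for one double ray per prism rather than two.
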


\begin{proof}
The proof of Lemma~\ref{Inf Cylinder} consists of two parts. 
First we extend the collection of double rays that~$\Ccal^\prime$ induces on~$\Gamma^\prime$ to a collection of double rays spanning~$V(\Gamma)$ by using the cycles~$C_v$.
Note that if~$k=2$, we consider the edge between the two vertices in each~$[v]$ as~$C_v$ as the circles found by~(ii)~(b) only are used to collect all vertices in~$[v]$ in a path, which is trivial if there are only two vertices in~$[v]$. 
In the second part we show how we use this new collection of double rays to define a Hamilton circle of~$\Gamma$. 
Let~$v^\prime$ and~$w^\prime$ be two vertices in~$\Gamma^\prime$ and let~$v_i$ and~$w_i$ be the vertices corresponding to~$v^\prime$ and~$w^\prime$ in~$\Gamma_i$.
If~$v^\prime w^\prime$ is an edge of~$\Gamma^\prime$ then by assumption (ii) we know that~$v_iw_i$ is an edge of~$\Gamma$ for each~$i$.
This implies that there is a perfect matching between the cycles~$C_v$ and~$C_w$. 

The Hamilton circle~$\Ccal^\prime$ induces a subgraph of~$\Gamma^\prime$, say~$\Rcal^\prime$.
As~$\Gamma^\prime$ is infinite, we know that~$\Rcal^\prime$ consists of a collection of double rays. 
Let 
$${R^\prime=\ldots , r_{-1},r_0,r_1,\ldots}$$
be such a double ray. 
Let~$R_1^\prime,\ldots, R_k^\prime$ be the copies of~$R^\prime$ in~$\Gamma$ given by assumption (i).
Let~$r^j_i$ be the vertex of~$R_j$ corresponding to the vertex~$r_i$.
We now use~$R^\prime$ to construct a double ray~$R$ in~$\Gamma$ that contains all vertices of~$\Gamma$ which are contained in any~$R^\prime_j$. 
We first build two rays~$R^+$ and~$R^-$ which together will contain all vertices of the copies of~$R^\prime$.

For~$R^+$ we start in the vertex~$r_0^1$ and take the edge~$r_0^1r_1^1$. 
Now we follow the cycle~$C_{r_1}$ till the next vertex would be~$r_1^1$, say this vertex is~$r^\ell_1$ and now take the edge~$r^\ell_1r^\ell_2$.
We repeat this process of moving along the cycles~$C_v$ and then taking a matching edge for all positive~$i$.
We define~$R^-$ analogously for all the negative~$i$ by also starting in~$r_0^1$ but taking the cycle~$C_{r_0}$ before taking matching edges. 
Finally we set~$R$ to be the union of~$R^+$ and~$R^-$.
As~$R^+ \cap R^- = r_0^1$ we know that~$R$ is indeed a double ray. 
Let~$\Rcal$ be the set of double rays obtained by this method from the set of~$\Rcal^\prime$. 

In the following we show that the closure of~$\Rcal$ is a Hamilton circle in~$|\Gamma|$.  
By Lemma~\ref{What is HC Inf} it is enough to show the following three conditions. 
\begin{enumerate}
	\item $\Rcal$ induces degree two at every vertex of~$\Gamma$, 
	\item the closure of~$\Rcal$ is topologically connected and
	\item every end of~$\Gamma$ is contained in the closure of~$\Rcal$ and has degree two in~$\Rcal$. 
\end{enumerate}
1.\ follows directly by construction. We can conclude 2.\ directly from the following three facts: First: Finite paths are topologically connected, secondly: there is no finite vertex separator separating any two copies of~$\Gamma^\prime$ in~$\Gamma$ and finally:~$\Rcal^\prime$ was a Hamilton circle of~$\Gamma^\prime$, and thus~$\Rcal^\prime$ meets every finite cut of~$\Gamma^\prime$ and hence~$\Rcal$ meets every finite cut of~$\Gamma$. 
It is straightforward to check that by our assumptions there is a natural bijection between the ends of~$\Gamma$ and~$\Gamma^\prime$.\footnote{Assumption (iv) implies that no two ends of~$\Gamma^\prime$ get identified and the remaining parts are trivial or follow from the Jumping Arc Lemma, see~\cite{diestelBook10noEE, TopSurveyI}.}
This, together with the assumption that the closure of~$\Rcal^\prime$  is a Hamilton circle of~$\Gamma^\prime$, implies 3.\ and thus the proof is complete. 
\end{proof}

We invoke {\rm\cite[Proposition 11.41]{mei}} and deduce the following useful fact

\begin{lemma}
\label{sub group quasi iso}
Let~$G$ be a finitely generated group with a subgroup~$H$ of finite index, then the numbers of ends of~$H$ and~$G$ are equal.
\end{lemma}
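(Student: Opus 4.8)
The plan is to deduce this directly from the cited result {\rm\cite[Proposition 11.41]{mei}}, which states that a finitely generated group and any subgroup of finite index are quasi-isometric. Since the number of ends of a graph (and hence of a finitely generated group, via any locally finite Cayley graph) is a quasi-isometry invariant, the statement follows immediately. So the only real content is to spell out why the hypotheses of Proposition 11.41 are met and why end count passes through quasi-isometry.

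First I would note that $H$, being a finite-index subgroup of the finitely generated group $G$, is itself finitely generated; this is standard (for instance by the Reidemeister--Schreier procedure, or by a ping-pong/transversal argument), so both $G$ and $H$ have well-defined Cayley graphs that are locally finite once we fix finite generating sets. Then I would invoke {\rm\cite[Proposition 11.41]{mei}} to conclude that $\Gamma(G,S)$ and $\Gamma(H,S')$ are quasi-isometric for any finite generating sets $S$ of $G$ and $S'$ of $H$. Recall from Section~\ref{on groups} that $e(G) = |\Omega(\Gamma(G,S))|$ is independent of the choice of finite $S$ by {\rm\cite[Theorem 11.23]{mei}}, and similarly for $H$; so it suffices to compare the end sets of one Cayley graph of each.

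Next I would use the fact that the number of ends of a locally finite, connected graph is invariant under quasi-isometry — this is a classical fact about the ends of quasi-isometric spaces, and in the group-theoretic setting it is often folded into the statement that $e(G)$ depends only on the quasi-isometry type of $G$ (again available in~\cite{mei}). Applying this to the quasi-isometry between $\Gamma(G,S)$ and $\Gamma(H,S')$ yields $|\Omega(\Gamma(G,S))| = |\Omega(\Gamma(H,S'))|$, i.e.\ $e(G) = e(H)$, which is exactly the claim.

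The main obstacle, such as it is, is purely expository: making sure the chain ``finite index $\Rightarrow$ quasi-isometric $\Rightarrow$ same number of ends'' is cited at the right level of generality, since the paper wants to keep the argument short and lean entirely on references. If one preferred a self-contained argument avoiding the quasi-isometry invariance of ends, the alternative would be a direct combinatorial comparison: fix a finite transversal $T$ of $H$ in $G$, observe that each component of $\Gamma(G,S)\setminus F$ (for a finite vertex set $F$) meets $\Gamma(H,S')$ and conversely, and track how components and rays correspond under the inclusion $H \hookrightarrow G$ — but this is more laborious and the reference-based route is cleaner, so I would present the proof as the two-line deduction above.
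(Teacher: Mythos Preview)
Your proposal is correct and follows exactly the approach the paper takes: the paper does not give a proof at all but simply states that the lemma is deduced from {\rm\cite[Proposition 11.41]{mei}}, which is precisely the quasi-isometry route you outline. Your write-up just spells out the implicit steps (finite index $\Rightarrow$ quasi-isometric $\Rightarrow$ same number of ends) that the paper leaves to the reader.
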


Now we want to invoke Lemma \ref{Inf Cylinder} in order to study context-free groups.
First of all let us review some basic notations and definitions regarding context-free groups.
A group~$G$ is called \emph{context-free} if~$G$ contains a free subgroup with finite index.
Let us have a closer look at context-free groups.
In the following,~$F$ will always denote a free group and~$F_r$ will denote the free group of rank~$r$.
So let~$F$ be a free subgroup of finite index of~$G$.
If~$F=F_1$, then~$G$ is two-ended, see Theorem \ref{stallings}.
Otherwise~$G$ has infinitely many ends, as the number of ends of~$G$ is equal to the number of ends of~$F$ by Lemma~\ref{sub group quasi iso}.
To extend Theorem~\ref{pak} to infinite groups we first need to introduce  the following notation. 
Let~$G$ be a context-free group with a free subgroup~$F_r$ with finite index.

It is known that~$\mathsf{Core}(F_r)$ is a normal free subgroup of finite index, see~\cite[Corollary 8.4, Corollary 8.5]{bogo}.
Here we need two notations.
For that let~$G$ be a fixed group. 
By~$m_H$ we denote the index of a subgroup~$H$ of~$G$, i.e.~${[G : H]}$.
We set
$$n_G \defi \min \{m_H \mid H\text{ is a normal free subgroup of } G \textnormal{ and } [G:H]<\infty\}$$
and
$$r_G\defi \min \{\mathsf{rank}(H) \mid H\text{ is a normal free subgroup of } G \textnormal{ and } n_G=m_H\}.$$
It is worth remarking that~$n_G\leq n!(r-1)+1$, because  we already know that~$\mathsf{Core}(F_r)$ is a normal subgroup of~$G$ with finite index at most~$n!$.
On the other hand, it follows from the Nielsen-Schreier Theorem, see \cite[Corollary 8.4]{bogo}, that~$\mathsf{Core}(F_r)$ is a free group as well and by Schreier’s formula (see \cite[Corollary 8.5]{bogo}), we conclude that the rank of~$\mathsf{Core}(F_r)$ is at most~$n!(r-1)+1$.

We want to apply Corollary~\ref{agelos} to find a generating set for free groups such that the corresponding Cayley graph contains a Hamilton circle. 
By a theorem of Geogakopoulos~\cite{AgelosFleisch}, one could obtain such a generating set~$S$ of $F_r$ by starting with the standard generating set, say~$S^\prime$, and then defining~${S \defi S^\prime \cup S^{\prime 2} \cup S^{\prime 3}}$.
Such a generating set has the size~${8r^3+4r^2 +2r}$.  
In Lemma~\ref{free_group_two_connected} we find a small generating set such that~$F_r$ with this generating set is 2-connected and obtain in Corollary~\ref{Free group HC} a generating set of size~$6r(r+1)$ such that the Cayley graph of~$F_r$ with this generating set contains a Hamilton circle. 

\begin{lemma}
\label{free_group_two_connected}
There exists a generating set~$S$ of~$F_r$ of size less than~$6r$ such that~$\Gamma(F_r,S)$ is two-connected.  
\end{lemma}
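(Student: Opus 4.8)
The plan is to construct an explicit generating set $S$ of $F_r$ of size less than $6r$ whose Cayley graph is $2$-connected, by taking the standard generating set $\{t_1^\pm,\dots,t_r^\pm\}$ and enriching it so that the resulting graph has no cut vertex. The underlying intuition is that the standard Cayley graph of $F_r$ is the $2r$-regular tree, which has lots of cut vertices (every vertex is one), so we need to add edges that ``short-circuit'' around each potential separator. A natural choice is to throw in a few products of length two: something like $\{t_i t_{i+1}^{\pm 1}\}$ and $\{t_i^2\}$, or more symmetrically a bounded number of length-$\le 2$ words per standard generator, keeping the total count below $6r$. First I would fix the candidate set $S$ precisely (for instance $S' = \{t_i^{\pm 1}\}$ of size $2r$ together with one or two extra length-two words attached to each $t_i$, so $|S| \le 2r + 4r = 6r$, and one checks the strict inequality by being slightly economical, e.g.\ using the cyclic structure $t_i t_{i+1}$ with indices mod $r$).

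Next I would verify $2$-connectivity. Since $F_r = \langle S \rangle$ is one-ended only when... actually $F_r$ has infinitely many ends for $r \ge 2$, so Lemma~\ref{one-ended} does not directly apply; instead I would argue combinatorially. The key step is: given any vertex $v \in F_r$, show $\Gamma(F_r,S) \setminus v$ is connected. By vertex-transitivity it suffices to do this for $v = 1$. Every vertex $g \ne 1$ has a normal form $g = s_{i_1}\cdots s_{i_m}$ in the \emph{standard} generators; I want to exhibit a path from $g$ to some fixed base vertex (say $t_1$) avoiding $1$. The idea is that the extra generators let one ``step around'' the origin: when a geodesic in the tree would pass through $1$, one reroutes using a length-two generator of the form $t_i t_j$ to jump directly between two neighbors of $1$ without visiting $1$ itself. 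Concretely, the neighbors of $1$ are the $2r$ vertices $t_i^{\pm 1}$, and I need the subgraph they induce (together with edges to vertices farther out) to remain connected after deleting $1$; adding generators $t_i t_{i+1}$ provides edges $t_i^{-1} \!\to\! t_{i+1}$ (since $t_i^{-1}\cdot t_i t_{i+1} = t_{i+1}$) linking these neighbors into a single cycle-like structure.

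The main obstacle I anticipate is making the rerouting argument clean and uniform: one must check that \emph{every} cut-vertex candidate is killed, not just the identity, and that the length-two generators chosen actually connect \emph{all} $2r$ neighbors of a vertex into one component of the punctured graph — it is easy to add generators that connect the $t_i$'s among themselves but forget the $t_i^{-1}$'s, or vice versa. I would handle this by choosing the extra generators so that the ``link'' of each vertex (the graph on its $2r$ neighbors, with an edge whenever two neighbors are joined by a length-$\le 2$ path avoiding the center) is connected; verifying this is a finite check on a fixed small graph, independent of $r$ up to the cyclic indexing. Once the link is connected at one vertex, transitivity and the tree-like global structure of $F_r$ (any two vertices are joined by a path, and local detours suffice) give $2$-connectivity. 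Finally I would tally $|S|$ and confirm it is strictly less than $6r$, which fixes exactly how many extra words per generator I am allowed (at most just under $4$ on average, so e.g.\ $\{t_i^{\pm1}\} \cup \{t_it_{i+1}\}_{i} \cup \{t_i t_{i+1}^{-1}\}_i$ of total size $4r < 6r$ should suffice, with the two-element-per-index families giving the needed link-connectivity).
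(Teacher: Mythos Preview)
Your high-level strategy matches the paper's: use vertex-transitivity to reduce to showing that $\Gamma\setminus\{1\}$ is connected, and achieve this by adjoining a bounded-per-index family of length-two words so that the standard neighbours $t_i^{\pm}$ of the identity all lie in one component. The paper carries this out with a \emph{star-shaped} choice rather than your cyclic one: it takes
\[
T=\{t_1,\dots,t_r,\ t_1^2,\dots,t_r^2,\ t_1t_2,\ t_1t_3,\dots,\ t_1t_r\},\qquad S=T^{\pm},
\]
and observes that $t_i^2$ joins $t_i$ to $t_i^{-1}$ while $t_1t_i$ joins $t_1^{-1}$ to $t_i$, so all of $t_1^{\pm},\dots,t_r^{\pm}$ are linked once $1$ is removed. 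This gives $|S|=6r-2$.

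There is a genuine, if small, gap in your version: the paper's convention (stated in Section~\ref{on groups}) is that generating sets are symmetric, and sizes are counted accordingly. Your proposed set $\{t_i^{\pm}\}\cup\{t_it_{i+1}\}_i\cup\{t_it_{i+1}^{-1}\}_i$ is \emph{not} symmetric; once you close it under inverses you pick up $t_{i+1}^{-1}t_i^{-1}$ and $t_{i+1}t_i^{-1}$ as well, and for $r\ge 3$ none of these coincide, so the symmetric set has size exactly $6r$, not strictly less. Your link-connectivity check is fine (indeed $t_i^{-1}\sim t_{i+1}^{-1}$ via $t_it_{i+1}^{-1}$ and $t_i^{-1}\sim t_{i+1}$ via $t_it_{i+1}$ connects everything), but the bound fails by two elements. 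The easiest repair is precisely the paper's: replace the cyclic family $\{t_it_{i+1}^{-1}\}_i$ by the squares $\{t_i^2\}_i$ (each of which is its own inverse's partner in the obvious way), or replace the cyclic $\{t_it_{i+1}\}_i$ by the star $\{t_1t_i\}_{i\ge 2}$, either of which shaves off the needed couple of elements.
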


\begin{proof}
Let~$\{s_1,\ldots,s_r\}^{\pm}$ be the standard generating set of~$F_r$. 
We set
$$T \defi \{s_1,\ldots, s_r, s_1^2,\ldots, s_r^2, s_1s_2, s_1s_3,\ldots s_1s_r \}.$$
Finally we define~$S \defi T^\pm$. 
It is straightforward to see that~$|T| = 3r-1$ and hence~$|S| = 6r-2$.
We now claim that~$\Gamma \defi \Gamma(F_r,S)$ is 2-connected. 
For that we consider~$\Gamma \setminus\{1\}$ where~$1$ is the vertex corresponding to the neutral element of~$F_r$. 
It is obvious that the vertices~$s_i$ and~$s_i^{-1}$ are contained in the same component of~$\Gamma \setminus \{ 1\}$ as they are connected by the edge~$s_i^2$.
Additionally the edges of the form~$s_1s_i$ imply that~$s_1$ and~$s_i$ are always in the same component. 
This finishes the proof. 
\end{proof}
Using Lemma~\ref{free_group_two_connected} and applying Corollary~\ref{agelos} we obtain the following corollary. 

\begin{coro}
	\label{Free group HC}
	For every free group~$F_r$ there exists a generating set~$S$ of~$F_r$ of size at most~${6r( 6 r+1)}$ such that~$\Gamma(F_r,S)$ contains a Hamilton circle. \qed 
\end{coro}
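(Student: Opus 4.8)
The plan is to combine Lemma~\ref{free_group_two_connected} with Corollary~\ref{agelos} in the obvious way, and then carefully count generators. First I would take the generating set $S$ of $F_r$ produced by Lemma~\ref{free_group_two_connected}; recall that $|S| = 6r-2$ and that $\Gamma(F_r,S)$ is two-connected. Since $F_r$ is an infinite group and $\Gamma(F_r,S)$ is two-connected, Corollary~\ref{agelos} applies directly and tells us that $\Gamma(F_r, S \cup S^2)$ contains a Hamilton circle. So the generating set we want is $S' \defi S \cup S^2$, and the entire content of the corollary is the bound $|S'| \leq 6r(6r+1)$.

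The main (and only) real step is the cardinality estimate. We have $|S| = 6r-2$, and $S^2 = \{ s t \mid s,t \in S\}$ has at most $|S|^2 = (6r-2)^2$ elements. Hence
$$
|S \cup S^2| \leq |S| + |S|^2 = (6r-2) + (6r-2)^2 = (6r-2)(6r-1) \leq 6r(6r+1),
$$
where the last inequality is immediate since $6r-2 \leq 6r$ and $6r-1 \leq 6r+1$. (One should note that $S$ being symmetric is consistent with $S \cup S^2$ being symmetric, since $(st)^{-1} = t^{-1}s^{-1} \in S^2$, so $\Gamma(F_r, S\cup S^2)$ is a genuine Cayley graph in the sense used throughout the paper.) This yields a generating set of size at most $6r(6r+1)$ whose Cayley graph contains a Hamilton circle, as claimed.

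I do not anticipate any genuine obstacle here: the statement is essentially a bookkeeping corollary packaging Lemma~\ref{free_group_two_connected} and Corollary~\ref{agelos}. The only place to be slightly careful is to make sure the size bound in the statement is stated loosely enough to absorb the crude estimate $|S^2| \leq |S|^2$; since $(6r-2)(6r-1) < 6r(6r+1)$ with room to spare, there is no issue. If one wanted a tighter constant one could argue that many products in $S^2$ coincide or lie already in $S$, but that refinement is unnecessary for the stated bound.
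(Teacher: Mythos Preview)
Your proposal is correct and follows exactly the approach the paper intends: combine Lemma~\ref{free_group_two_connected} with Corollary~\ref{agelos} and bound $|S\cup S^2|\le |S|+|S|^2=(6r-2)(6r-1)\le 6r(6r+1)$. The paper itself merely states this as an immediate corollary with a \qed, so your write-up simply spells out the implicit size computation.
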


We are now able to find a direct extension of Theorem~\ref{pak} for context-free groups.

\begin{thm}
	\label{inf_pak}
	Let~$G$ be a context-free group with~$n_G \geq 2$.
	Then there exists a generating set~$S$ of~$G$ of size at most~$\log_2 (n_G)+1 +6r_G(6r_G +1)$ such that~$\Gamma(G,S)$ contains a Hamilton circle.
\end{thm}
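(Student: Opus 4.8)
The plan is to combine the structural bound on $n_G$ and $r_G$ with the "cylinder"-type construction of Lemma~\ref{Inf Cylinder}. First I would let $H$ be a normal free subgroup of $G$ realizing the minima, so that $[G:H] = n_G$ and $\mathsf{rank}(H) = r_G$; in particular $H \cong F_{r_G}$. By Corollary~\ref{Free group HC} there is a generating set $S_0$ of $H$ of size at most $6r_G(6r_G+1)$ such that $\Gamma(H,S_0)$ contains a Hamilton circle, which we call $\Gamma^\prime \defi \Gamma(H,S_0)$. This will be the "base" graph to which Lemma~\ref{Inf Cylinder} is applied with $k = n_G$.

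Next I would build the generating set of $G$. Pick a transversal $\{t_1 = 1, t_2, \ldots, t_{n_G}\}$ of $H$ in $G$; since $n_G \geq 2$ (hence $G \neq H$) we may arrange the transversal so that the cosets can be cyclically ordered via "multiplier" generators. Because $[G:H] = n_G$, the quotient permutation action on the $n_G$ cosets lets us choose at most $\lceil \log_2 n_G \rceil$ elements $g_1, \ldots$ of $G$ whose images generate $G/H$ in such a way that the Cayley graph of the coset structure is connected — more carefully, one wants the finite graph on the $n_G$ cosets, with edges given by these $\log_2 n_G + 1$ new generators (and their inverses), to be connected and in fact to support a cycle $C_v$ through all $n_G$ copies of each vertex $v$ of $\Gamma^\prime$, as required by condition~(iii) of Lemma~\ref{Inf Cylinder}. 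Then set $S \defi S_0 \cup \{g_i^{\pm}\}$; its size is at most $6r_G(6r_G+1) + \log_2(n_G)+1$, matching the bound in the statement. One must also verify condition~(ii), that each copy $\Gamma_i^\prime$ (the coset $t_iH$ with its $S_0$-edges) embeds as a subgraph — this holds because $H$ is \emph{normal}, so left multiplication by $t_i$ carries $\Gamma(H,S_0)$ isomorphically onto the induced subgraph on $t_iH$ — and condition~(iv), the finite-diameter bound, which is automatic since $S$ is finite and the copies are cosets.

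With all four hypotheses of Lemma~\ref{Inf Cylinder} in place, that lemma delivers a Hamilton circle of $\Gamma(G,S)$, finishing the proof. I would also note the degenerate-flavoured subtlety: if $n_G = 2$ we are in case~(iii)(a) of Lemma~\ref{Inf Cylinder} (a single matching edge between the two copies of each vertex), handled by a single new generator $g_1$, and $\log_2 2 + 1 = 2$ still bounds the number of added generators; and we should double-check that $\Gamma^\prime$ being a Cayley graph with a Hamilton circle is exactly the input Lemma~\ref{Inf Cylinder} wants.

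The main obstacle I anticipate is condition~(iii) of Lemma~\ref{Inf Cylinder}: producing, from at most $\log_2(n_G)+1$ new generators, genuine \emph{cycles} $C_v$ through the $n_G$ fibres over each vertex $v$ — not merely connectivity of the coset graph. This is precisely the finite-group content of Theorem~\ref{pak} (Pak–Radoi\v{c}i\'c): one needs a generating set of the finite quotient-like structure on $n_G$ cosets of size $\leq \log_2 n_G$ whose Cayley graph is Hamiltonian, plus one extra generator's worth of slack. So the cleanest route is to apply Theorem~\ref{pak} to an auxiliary finite group of order $n_G$ capturing the coset combinatorics, pull back the resulting Hamilton cycle to the family of cycles $C_v$, and then feed everything into Lemma~\ref{Inf Cylinder}; the bookkeeping to make "the coset structure" into an honest finite group (rather than just a transitive action) is the part that needs care, and is where I would spend most of the write-up.
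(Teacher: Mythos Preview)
Your proposal follows essentially the same route as the paper: choose a normal free subgroup $H$ realising $n_G$ and $r_G$, apply Corollary~\ref{Free group HC} to $H$ and Theorem~\ref{pak} to the finite quotient, and assemble the pieces via Lemma~\ref{Inf Cylinder} (with the $n_G=2$ case singled out). Two minor remarks: the paper treats $r_G=1$ separately using Lemma~\ref{Cylinder} rather than Lemma~\ref{Inf Cylinder}, and your closing worry about turning the coset structure into ``an honest finite group'' is unnecessary --- since $H$ is normal, $G/H$ is already that group, and Theorem~\ref{pak} applies to it directly.
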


\begin{proof}
	Suppose that~$G$ is a context-free group.
	Furthermore let~$F_r$  be a free subgroup of~$G$ with finite index~$n$, where~$r\geq 1$.
	We split our proof into two cases.
	
	First assume that~$r=1$. 
	This means that~$G$ contains a subgroup isomorphic to~$\Z$ with finite index and thus~$G$ is two-ended. 
	Let~$H =\langle g \rangle$ be the normal free subgroup of~$G$ such that~${m_{\langle g \rangle} = n_G}$. 
	Let~$\overline{G} {\defi G / H}$. 
	We may assume~${|\overline{G}| \geq 3}$:
	by the assumptions we know that~${|\overline{G}| \geq 2}$, so if~${|\overline{G}| = 2}$ then we choose an element~$f \notin H$ and obtain a Hamilton circle of~$\Gamma \defi \Gamma(G, S^\pm)$ with~$S \defi \{f,g\}$ as~$\Gamma$ is isomorphic to the double ladder.  
	Our assumptions imply that~$\overline{G}$ is a group of order~$n_G$.
	As~$n_G$ is finite, we can apply Theorem~\ref{pak} to~$\overline{G}$ to find a generating set~$\overline{S}$ of~$\overline{G}$ such that~$\Gamma(\overline{G},\overline{S})$ contains a Hamilton cycle. 
	For each~$\bar{s} \in \overline{S}$ we now pick a representative~$s$ of~$\bar{s}$.
	Let~$S^\prime$ be the set of all those representatives. 
	We set~$S \defi S^\prime \cup \{g^{\pm}\}$. 
	By construction we know that~$G = \langle S \rangle$. 
	It is straightforward to check that~$\Gamma(G,S)$ fulfills the conditions of Lemma~\ref{Cylinder} and thus we are done as~$|S| =  \log_2(n_G) +2$.

	Now suppose that~$r\geq 2$.
	Let~$H$ be a normal free subgroup of~$G$ such that~${\mathsf{rank}(H)=r_G}$.
	By Corollary~\ref{Free group HC} we know that there is a generating set~$S_H$ of size at most~${6r_G(6r_G+1)}$ such that~${\Gamma_H \defi \Gamma(H,S_H)}$ contains a Hamilton circle. 
	
	If~${n_G =2}$ then, like in the above case, we can just choose an~$f \in G \setminus H$ and a set of representatives for the elements in~$S_H$, say~$S^\prime$, and set~$S \defi S^\prime \cup \{f^\pm\}$ to obtain a generating set such that~$\Gamma(G,S)$ fulfills the condition of Lemma~\ref{Inf Cylinder}.

	So let us assume that~${n_G \geq 3}$.  
	We define~${\overline{G} \defi G /H}$.
	As~$\overline{G}$ is a finite group we can apply Theorem~\ref{pak} to obtain a generating set~$\overline{S}$ for~$\overline{G}$ of size at most~$\log_2(n_G)$ such that~$\Gamma(\overline{G},\overline{S})$ contains a Hamilton cycle. 
	Again choose representatives of~$\overline{S}$ to obtain~$S^\prime$. 
	Let~$S \defi S^\prime \cup S_H$. 
	Note that
	$$|S| \leq 6r_G(6r_G +1) + \log_2(n_G).$$ 
	By construction we know that~$G = \langle S \rangle$. 
	Again it is straightforward to check that~$\Gamma \defi \Gamma(G, S)$ fulfills the conditions of Lemma~\ref{Inf Cylinder} and thus we are done. 
\end{proof}

\begin{coro}
	Let~$G$ be a two-ended group.
	Then there exists a generating set~$S$ of~$G$ of~$\log_2 (n_G) +3$ such that~$\Gamma(G,S)$ contains a Hamilton circle.	\qed
\end{coro}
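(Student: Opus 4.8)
The plan is to read this off as the $r=1$ specialisation of Theorem~\ref{inf_pak}; the far better bound here simply reflects that a two-ended group is context-free with a free subgroup of rank one.

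First I would record two structural facts. A two-ended group $G$ is context-free: by Theorem~\ref{stallings} it has an infinite cyclic subgroup $C$ of finite index, and replacing $C$ by its core $\mathsf{Core}(C)$ — which is normal, still of finite index, and (being an infinite subgroup of $C\cong\Z$) again infinite cyclic — gives a normal free subgroup of finite index, so $n_G$ and $r_G$ are defined. Moreover $r_G=1$: if $H\le G$ is a normal free subgroup of finite index then $H$ has the same number of ends as $G$ by Lemma~\ref{sub group quasi iso}, hence is two-ended, and a two-ended free group is infinite cyclic since a free group of rank at least two has infinitely many ends. As in Theorem~\ref{inf_pak} I would keep the hypothesis $n_G\ge 2$ (the lone two-ended group with $n_G=1$ is $G\cong\Z$, which admits a Hamilton circle on two generators, cf.\ the result quoted before Theorem~\ref{index2 hamilton}).

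Then I would run the $r=1$ branch of the proof of Theorem~\ref{inf_pak} with $H=\langle g\rangle$ a normal infinite cyclic subgroup of index $n_G$ and $\overline G=G/H$. If $n_G=2$, this is the $|\overline G|=2$ subcase: for $f\in G\setminus H$ the graph $\Gamma(G,\{f,g\}^\pm)$ is the double ladder, hence has a Hamilton circle, on a generating set of size at most $4=\log_2(2)+3$. If $n_G\ge 3$, then $\overline G$ is finite of order $n_G$, so Theorem~\ref{pak} yields a generating set $\overline S$ of $\overline G$ with $|\overline S|\le\log_2(n_G)$ whose Cayley graph has a Hamilton cycle; lifting $\overline S$ to a set $S'$ of coset representatives and putting $S:=(S'\cup\{g^\pm\})^\pm$, one checks exactly as in Theorem~\ref{inf_pak} that $\Gamma(G,S)$ meets the hypotheses of the Cylinder Lemma~\ref{Cylinder}: the cosets of $H$ are the finite classes $X_i$, the lift of the Hamilton cycle of $\overline G$ together with the conjugation action of $g$ on $H$ closes up to a Hamilton cycle in each $\Gamma[X_i]$, the generator $g$ gives the matchings between consecutive $X_i$, and all generators have bounded displacement. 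So $\Gamma(G,S)$ has a Hamilton circle with $|S|\le\log_2(n_G)+3$, the term $\log_2(n_G)+2$ coming from Theorem~\ref{inf_pak} and the extra $+1$ being a safe margin for symmetrising the lifted set.

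There is no genuine obstacle here — all the substance lives in Theorem~\ref{inf_pak} and the Cylinder Lemma — only bookkeeping to watch: verifying $r_G=1$ so that one is in the favourable branch, keeping the final symmetric generating set within size $\log_2(n_G)+3$, and isolating the degenerate cases $n_G\in\{1,2\}$. I would expect the write-up to be just a few lines pointing to the $r=1$ case of the preceding proof together with the observation that $r_G=1$ for two-ended groups.
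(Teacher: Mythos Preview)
Your proposal is correct and matches the paper exactly: the corollary carries a bare \qed, meaning it is simply the $r=1$ case of Theorem~\ref{inf_pak}, and your justification that $r_G=1$ for two-ended $G$ (via Theorem~\ref{stallings} and Lemma~\ref{sub group quasi iso}) is precisely the missing sentence.

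One small slip worth fixing in your sketch of the Cylinder Lemma verification: the finite classes $X_i$ are \emph{not} the cosets of $H=\langle g\rangle$ --- those cosets are infinite, being translates of $\Z$, and $g$ moves \emph{within} each of them rather than between them. The correct picture swaps the two roles: each $X_i$ is a transversal of $H$ (one vertex from each of the $n_G$ cosets, at ``height'' $g^i$), the lift of the Hamilton cycle of $\overline G$ provides the Hamilton cycle inside each $\Gamma[X_i]$, and right-multiplication by $g$ gives the perfect matching $X_i\to X_{i+1}$. Your description of what the cycle and the matching \emph{do} is already consistent with this; only the identification of the sets $X_i$ themselves is off. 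Since the paper's own proof of Theorem~\ref{inf_pak} leaves this verification as ``straightforward'', the slip does not affect the validity of your reduction.
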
	

\begin{remark}
	We note that it might not always be best possible to use Theorem~\ref{inf_pak} to obtain a small generating set for a given context-free group. 
	The advantage about Theorem~\ref{inf_pak} compared to just applying Corollary~\ref{agelos} is that one does not need to ``square'' the edges between copies of the underlying free group. 
	This is a trade-off though, as the following rough calculation shows. 
	Suppose that~$\Gamma \defi \Gamma(G, S)$ where~$G$ is a context-free group. 
	Additionally assume that~$\Gamma$ is 2-connected, which is the worst for Theorem~\ref{inf_pak} when comparing Theorem~\ref{inf_pak} with a direct application of Corollary~\ref{agelos}.
	Applying Corollary~\ref{agelos} to~$\Gamma$ we obtain that~$\Gamma(G, S \cup S^2)$ is Hamiltonian.
	For instance, let~$F_r$ be a normal free subgroup of~$G$ with $r_G=r$ and~$[G:F_r]=n_G$.
	We now define~$S_F$ as the standard generating set of~$F_r$ and~$S_H$ as the representative of the cosets of~$F_r$.
	Then set~$S:=S_F\cup S_H$.
	We have
	\begin{align*}
		|S_F^2|  &= 4r^2=4r_G^2 \\
		|S_H S_F| = |S_F S_H|  &= 2r_G=2n_Gr_G \\
		|S_H^2|  &=  n_G^2. 
	\end{align*}
	Applying Corollary~\ref{agelos} yields a generating set of size~${4r_G^2 + 4r_Gn_G + n_G^2}$ while a a direct application of Theorem~\ref{inf_pak} yields a generating set of  size at most~$${\log_2 (n_G) +1+6r_G(6r_G +1)}.$$
	Thus which result is better depends the rank of the underlying free group and~$n_G$. 
\end{remark}

\subsection{Factor Group Lemma:}
\label{factorSection}
In this section we study extensions of the finite factor group lemma to infinite groups.
For that we first cite the factor group lemma:

\begin{thm}{\rm\cite[Lemma 2.3]{commutatorsubgroup}}\label{finite factor}
	Let~$G=\langle S\rangle$ be finite and let~$N$ be a cyclic normal subgroup of~$G$. 
	If~$[\bar{x_1},\ldots,\bar{x_n}]$  is a Hamilton cycle of~$\Gamma(G/N,\overline{S}\setminus \{N\})$ and the product~$x_1\cdots x_n$ generates~$N$, then~$\Gamma(G,S)$   contains a Hamilton cycle.\qed
\end{thm}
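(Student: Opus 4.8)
The plan is to \emph{lift} the Hamilton cycle of $\Gamma(G/N,\overline{S}\setminus\{N\})$ to a closed walk in $\Gamma(G,S)$ and then check that this walk meets every vertex exactly once. Write $g\defi x_1\cdots x_n$ and $m\defi|N|$. First I would record two easy observations. Since $[\bar{x_1},\ldots,\bar{x_n}]$ is a cycle, its partial products close up, so $\overline{x_1\cdots x_n}$ is the identity of $G/N$; hence $g\in N$, and by hypothesis $\langle g\rangle=N$, so $o(g)=m$. Also, each $\bar{x_j}$ lies in $\overline{S}\setminus\{N\}$, so the chosen representative $x_j$ lies in $S$; consequently every step ``right-multiply the current vertex by $x_j$'' traverses an edge of $\Gamma(G,S)$.

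Next I would define the closed walk $W$ that starts at the identity $1$ and reads off the word $x_1,x_2,\ldots,x_n$ repeatedly, for $m$ passes in total. Its vertex sequence is
$$g^{\,i}\,x_1\cdots x_j\qquad(0\le i\le m-1,\ 0\le j\le n-1),$$
listed in lexicographic order of $(i,j)$, with the empty product $x_1\cdots x_0\defi 1$, and then closed up to $g^{\,m}=1$. Consecutive vertices within a pass differ by right-multiplication by some $x_j\in S$; the transition from the end of pass $i$ to the start of pass $i+1$ sends $g^{\,i}x_1\cdots x_{n-1}$ to $g^{\,i}x_1\cdots x_n=g^{\,i+1}$, again right-multiplication by $x_n\in S$; and the closing transition sends $g^{\,m-1}x_1\cdots x_{n-1}$ to $g^{\,m}=1$. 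So $W$ is a closed walk in $\Gamma(G,S)$, and it remains only to see that no vertex is repeated.

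The vertex count of $W$ is $mn=|N|\cdot|G/N|=|G|$, so it suffices to show the listed vertices are pairwise distinct. Suppose $g^{\,i}x_1\cdots x_j=g^{\,i'}x_1\cdots x_{j'}$. Projecting to $G/N$ gives $\overline{x_1\cdots x_j}=\overline{x_1\cdots x_{j'}}$; but $\overline{x_1\cdots x_0},\ldots,\overline{x_1\cdots x_{n-1}}$ are precisely the $n$ distinct vertices of the Hamilton cycle $[\bar{x_1},\ldots,\bar{x_n}]$ of $\Gamma(G/N,\overline{S}\setminus\{N\})$, so $j=j'$. Cancelling $x_1\cdots x_j$ on the right then yields $g^{\,i}=g^{\,i'}$, whence $i=i'$ because $o(g)=m$ and $0\le i,i'\le m-1$. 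Thus $W$ is a Hamilton cycle of $\Gamma(G,S)$.

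The one genuinely essential ingredient — the step I would call the heart of the matter — is the distinctness argument, and its only real input is the hypothesis that $x_1\cdots x_n$ generates $N$: if $o(g)$ were a proper divisor of $m$, the walk $W$ would return to $1$ after only $o(g)<m$ passes and omit every vertex not lying in the $o(g)$ cosets it has already visited. Everything else is routine bookkeeping: that representatives of elements of $\overline{S}\setminus\{N\}$ lie in $S$, that consecutive vertices of $W$ are adjacent in $\Gamma(G,S)$, and that $W$ closes up. (The degenerate cases $|G/N|\le 2$ — where the ``Hamilton cycle'' of the quotient is a single vertex or a digon — are handled directly, $\Gamma(G,S)$ then being a cycle or a ladder-type graph on $|G|$ vertices.)
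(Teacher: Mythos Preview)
Your proof is correct. Note that the paper does not actually prove Theorem~\ref{finite factor}: it is cited from~\cite{commutatorsubgroup} and closed with a \qed. However, the paper does prove a generalization in Theorem~\ref{inf_group_factor}, and part~(i) of that proof follows exactly the same strategy you use: define $a\defi x_1\cdots x_n$, take the walk $1[x_1,\ldots,x_n]^{\ell}$ with $\ell=[G:H]$, and verify injectivity by first arguing via cosets that the inner index must agree, then cancelling to force the outer index to agree. Your presentation is in fact a bit crisper --- you count $mn=|G|$ directly rather than separately checking surjectivity --- but the idea is identical.
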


To be able to extend Theorem~\ref{finite factor}, we have to introduce some notation.
Let~$G$ be a group with a generating set~$S$ such that~$G$ acts on a set~$X$. 
The vertex set of the~$\textit{Schreier graph}$ are the elements of~$X$. 
We join two vertices~$x_1$ and~$x_2$ if and only if there exists~$s\in \{S\}$ such that~$x_1 = s x_2$. 
We denote the Schreier graph by~$\Gamma (G,S,X)$.

Suppose that~$X$ is the set of right cosets of a  subgroup~$H$ of~$G$. 
It is an easy observation that~$G$ acts on~$X$. 
Now we are ready to generalize the factor group lemma without needing the cyclic normal subgroup.  It is worth remarking that if we consider the trivial action of~$G$ on~$G$, we have the Cayley graph of~$G$ with respect to the generating~$S$, i.e.~$\Gamma(G,S,G)=\Gamma(G,S)$.

\begin{thm}
\label{inf_group_factor}
	Let~$G=\langle S\rangle$ be a  group and let~$H$ be a subgroup of~$G$ and let~$X$ be the set of left cosets of~$H$.  
	If~$1< [G:H]<\infty$ and~$[x_1,\ldots,x_n]$ is a Hamilton cycle of~$\Gamma(G,S,X)$ and the product~$x_1\cdots x_n$ generates~$H$, then we have the following statements.
	\begin{enumerate}[\rm (i)]
		\item If~$G$ is finite, then~$\Gamma(G,S)$ contains a Hamilton cycle.
		\item If~$G$ is infinite, then~$\Gamma(G,S)$ contains a Hamilton double ray. 
	\end{enumerate}
\end{thm}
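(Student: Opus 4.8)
The plan is to mimic the classical proof of the factor group lemma (Theorem~\ref{finite factor}), lifting the Hamilton cycle of the Schreier graph $\Gamma(G,S,X)$ to a closed walk in $\Gamma(G,S)$ and then unwinding it according to the cosets of $H$. Write $x_1\cdots x_n = h$, a generator of $H$; let $m \defi [G:H]$, so $X = \{H v_0, \dots, H v_{m-1}\}$ where $v_i \defi x_1 \cdots x_i$ with $v_0 = 1$ (reading the Hamilton cycle $[x_1,\dots,x_n]$ of the Schreier graph starting at the coset $H$). The key point, exactly as in the finite case, is that the walk $1, x_1, x_1x_2, \dots, x_1\cdots x_n = h$ in $\Gamma(G,S)$ visits each coset of $H$ exactly once (this is what it means for $[x_1,\dots,x_n]$ to be a Hamilton \emph{cycle} of the Schreier graph), and concatenating translates of this walk by powers of $h$ gives a walk through all of $G$ when $H = \langle h \rangle$.

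Case (i): if $G$ is finite then $H = \langle h \rangle$ is finite cyclic of order $k \defi o(h)$, and $|G| = mk$. The concatenation $1 \to h \to h^2 \to \cdots \to h^k = 1$, where each arrow $h^j \to h^{j+1}$ is the walk $h^j[x_1, x_2, \dots, x_n]$, is a closed walk visiting every vertex of $\Gamma(G,S)$ exactly once — indeed, $h^j x_1 \cdots x_i$ lies in the coset $h^j (H v_i) = H v_i$ shifted appropriately, and distinctness follows because $h$ has order exactly $k$ and the $v_i$ lie in distinct cosets. Hence this closed walk is a Hamilton cycle. This is verbatim the argument for Theorem~\ref{finite factor}, now phrased via the Schreier graph rather than the quotient group; I would just cite the structure of that proof.

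Case (ii): if $G$ is infinite, then since $[G:H] < \infty$, $H$ itself is infinite, so the cyclic group $H = \langle h\rangle$ is infinite cyclic, $H \cong \bbZ$. Now I form the \emph{two-way} infinite concatenation: define, for each $j \in \bbZ$, the finite path segment $W_j \defi h^j[x_1, \dots, x_n]$ running from $h^j$ to $h^{j+1}$, and set $R \defi \bigcup_{j \in \bbZ} W_j$. Each $W_j$ has $n$ edges and is a path on $n$ vertices $\{h^j, h^j x_1, \dots, h^j x_1\cdots x_{n-1}\}$ plus endpoint $h^{j+1}$; the vertex sets of distinct $W_j$ are disjoint because $h^j v_i = h^{j'} v_{i'}$ with $0 \le i, i' < n$ would force (projecting to $X$) $H v_i = H v_{i'}$, hence $i = i'$, hence $h^{j} = h^{j'}$, hence $j = j'$ as $o(h) = \infty$. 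Every vertex of $\Gamma(G,S)$ lies in exactly one $W_j$: given $g \in G$, it lies in some coset $H v_i$, so $g = h^j v_i = h^j x_1 \cdots x_i$ for a unique $j$. Therefore $R$ is a double ray containing every vertex of $\Gamma(G,S)$ exactly once, i.e.\ a Hamilton double ray.

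The main obstacle — and it is genuinely minor here compared to the earlier theorems — is making sure that the quantity $x_1\cdots x_n$ really generates an \emph{infinite} cyclic group in case (ii), which is what lets the naive two-sided concatenation work without collisions; this is handed to us for free by the hypothesis $[G:H] < \infty$ together with $G$ infinite, since a finite-index subgroup of an infinite group is infinite. One should also double check the orientation/indexing convention for the Schreier graph so that the edge $x_i$ joins $H v_{i-1}$ to $H v_i$ (consistent with the paper's right-coset versus left-coset phrasing), but this is purely bookkeeping. No appeal to Lemma~\ref{What is HC} or the Freudenthal topology is needed: a double ray through all vertices is by definition a Hamilton double ray (equivalently, a Hamilton arc whose image is connected), as defined in Section~\ref{topology}.
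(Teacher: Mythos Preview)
Your proposal is correct and follows essentially the same route as the paper: lift the Schreier Hamilton cycle to the walk $1[x_1,\ldots,x_n]^{|H|}$ in the finite case and to the two-sided concatenation $[x_n^{-1},\ldots,x_1^{-1}]^{\bbN}1[x_1,\ldots,x_n]^{\bbN}$ in the infinite case, then verify injectivity and surjectivity by projecting to cosets and using that $h=x_1\cdots x_n$ generates~$H$. The paper's argument is the same (it additionally remarks, via Theorem~\ref{stallings}, that $G$ is two-ended in case~(ii), but does not actually use this); note that your exponent $k=|H|$ in case~(i) is the correct one --- the paper writes $\ell=[G:H]$ there, which appears to be a slip.
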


\begin{proof}
	\begin{enumerate}[{\rm (i)}]
		\item Let us define~$ a\defi x_1\cdots x_n$. 
		Assume that~$[G:H]=\ell$. 
		We claim that~$C \defi 1[x_1,\ldots, x_n]^{\ell}$ is the desired Hamilton cycle of~$G$. 
		It is obvious that~$C$ contains every vertex of~$H$ atleast once. 
		Suppose that there is a vertex~$v \neq 1$ in~$C$ which is contained at least  twice in~$C$. 
		Say 
		$$v= a^{i_1}[x_1,\ldots, x_{i_2}] = a^{j_1} [x_1,\ldots, x_{j_2}] \mbox{ with } {i_1 \leq j_1 < l} \mbox{ and } i_2,j_2 < n.$$ 
		This yields that 
		\[ x_1 \cdots  x_{i_2} = a^{k} x_1 \cdots x_{j_2}  \mbox{ with } k \defi j_1-i_1 \geq 0\]
		As~$1$ and~$a^k$ are contained in~$H$, we may assume that~$i_2 = j_2$. 
		Otherwise~$x_1 \cdots x_{i_2}$ would belong to a different right coset of~$H$ as~$a^k x_1 \cdots x_{j_2}$ which would yield a contraction. 
		Thus we can now write
		$$x_1 \cdots x_{i_2} = a^{k} x_1 \cdots x_{j_2}$$
		 and it implies that~$k=0$.
		We conclude that~$C$ is indeed a cycle. 
		It remains to show that every vertex of~$\Gamma(G,S)$ is contained in~$C$. 
		So let~$v \in  V(\Gamma(G,S))$ and let~$H x_1\cdots x_k$  be the coset that contains~$v$. 
		So we can write~$v = h x_1 \cdots x_k$ with~$h \in H$. 
		As~$a$ generates~$H$ we know that~$h = a^j$. 
		So we can conclude that~$v = a^j x_1 \cdots x_i \in C$.
		So~$C$ is indeed a Hamilton cycle of~$G$. 
		
		\item The proof of (ii) is analogous to the above proof. 
		First notice that since~$G$ has a cyclic subgroup of finite index, we can conclude that~$G$ is two-ended by Theorem~\ref{stallings}.
		We now repeat the above construction with one small change. 
		Again define~${a \defi x_1 \cdots x_n}$. 
		As the order of~$a$ in~$H$ is infinite, we define~$C$ to be a double ray. 
		So let~$$C \defi[x_1^{-1},\ldots , x_n^{-1}]^\bbN1[x_1,\ldots, x_n]^{\bbN}.$$
		It is totally analogously to the above case to show that no vertex of~$\Gamma(G,S)$ is contained more than once in~$C$, we omit the details here. 
		It remains to show that every vertex of~$\Gamma(G,S)$ is contained in~$C$. 
		This is also completely analogue to the above case. \qedhere
	\end{enumerate}
\end{proof}

 Let us have a closer look at the preceding theorem.
As we have seen in the above proof the product~$x_1\cdots x_n$ plays a key role.
In the following we want to investigate a special case.
Suppose that~$G=\langle S\rangle$ is an infinite group with a normal subgroup~${H=\langle a\rangle}$ of finite index and moreover assume that~$G/H$ contains a Hamilton cycle~$1[x_1,\ldots,x_n]$.
Depending on the element~$x=x_1\cdots x_n$,  the following statements hold:
\begin{enumerate}[\rm (i)]
	\item If~$x=a$, then~$\Gamma(G,S)$ has a Hamilton double ray.
	\item  If~$x=a^2$, then~$\Gamma(G,S)$ has a Hamilton circle.
	\item If~$x=a^k$ and~$k\geq 3$, then~$\Gamma(G,S)$ has an infinite Hamilton cover of order~$k$.
\end{enumerate}
This yields us to conjecture the following: 
\begin{conj}
\label{Conj Hamilton cover}
Every two-ended transitive graph has a finite Hamilton cover.
\end{conj}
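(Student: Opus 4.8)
The plan is to reduce the problem to the structure theorem for two-ended transitive graphs and then build the cover explicitly. First I would recall that a connected two-ended vertex-transitive graph $\Gamma$ admits a partition of its vertex set into finite blocks $(X_i)_{i\in\bbZ}$ that are ``linearly arranged'' along the two ends: there is a $k\in\bbN$ so that edges only occur between blocks $X_i,X_j$ with $|i-j|<k$, and the automorphism group contains a shift mapping $X_i$ to $X_{i+1}$. This is the vertex-analogue of the edge-cut structure used throughout Section~\ref{construction} (it follows, for instance, from the fact that a two-ended transitive graph has a subgroup of automorphisms acting with finitely many orbits on a $\bbZ$-like quotient, cf.\ the use of Theorem~\ref{stallings} and Lemma~\ref{Cylinder}). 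In the Cayley-graph case this is exactly the setup of Lemma~\ref{Cylinder}; for general transitive graphs one gets the same skeleton, with the blocks being the orbits of the pointwise stabiliser of a suitable ``column''.

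Next I would use this linear block structure to produce finitely many disjoint double rays covering every vertex. The idea is to pick one vertex from each block in a compatible way: choose $v_0\in X_0$, and using a shift automorphism $\sigma$ with $\sigma(X_i)=X_{i+1}$, set $v_i \defi \sigma^i(v_0)$; then $\{v_i : i\in\bbZ\}$ meets every block, and since consecutive blocks $X_i,X_{i+1}$ are joined (the graph is connected and the cut between $\bigcup_{j\le i}X_j$ and the rest is crossed by edges), one can—possibly after passing to a power of $\sigma$ and re-grouping blocks so that consecutive grouped blocks are genuinely adjacent—route a double ray $R^{(1)}$ through the vertices $v_i$. Deleting $R^{(1)}$ leaves a graph with the same block structure on $|X_i|-1$ vertices per block (by transitivity all blocks have the same size, say $m$), so repeating the construction $m$ times yields $m$ pairwise disjoint double rays whose union is all of $V(\Gamma)$. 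That is precisely a finite Hamilton cover of order $m$.

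The main obstacle I expect is the routing step: showing that one can actually thread a double ray through a chosen transversal $\{v_i\}$ of the blocks, i.e.\ that $v_i$ and $v_{i+1}$ can be joined by a path internally disjoint from the paths used between other consecutive blocks. This is where one must be careful—the naive choice $v_i=\sigma^i(v_0)$ need not give $v_iv_{i+1}\in E(\Gamma)$, and edges may skip several blocks. The fix is to regard each ``super-block'' $Y_j \defi X_{jk}\cup\cdots\cup X_{jk+k-1}$ (so that all edges run between consecutive super-blocks or within one), find inside each $Y_j$ a Hamilton path of $\Gamma[Y_j]$ or at least a system of $m$ disjoint paths covering $Y_j$ with prescribed endpoints in $Y_{j-1}$- and $Y_{j+1}$-adjacent positions, and then splice these together across $j$. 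Existence of such path systems in the finite graphs $\Gamma[Y_j]$, with the endpoint constraints matching the perfect-matching-like connection pattern between consecutive super-blocks, is the combinatorial heart; for Cayley graphs it is immediate from Lemma~\ref{Cylinder}, and in general it should follow from a Menger/flow argument using transitivity and connectivity of $\Gamma$, but making this fully rigorous for arbitrary two-ended transitive graphs (whose local structure can be more complicated than in the Cayley case) is the delicate point. Once the path systems are in hand, assembling them into $m$ disjoint double rays and invoking the definition of an infinite Hamilton cover finishes the argument.
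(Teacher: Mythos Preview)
The statement you are trying to prove is listed in the paper as a \emph{conjecture}, not a theorem; the paper offers no proof of it. It is proposed immediately after the discussion following Theorem~\ref{inf_group_factor} as an open problem motivated by the observation that the product $x_1\cdots x_n=a^k$ yields an infinite Hamilton cover of order~$k$. So there is no ``paper's own proof'' to compare your attempt against.

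As for the substance of your sketch, it is a plausible outline but it has a genuine gap precisely where you yourself flag it. The structural input you invoke---a $\bbZ$-indexed partition into equal-size finite blocks with a shift automorphism and bounded interaction range---is indeed available for locally finite two-ended vertex-transitive graphs, so that part is fine. The problem is the iterative step: once you delete the first double ray $R^{(1)}$, the remaining graph is in general no longer vertex-transitive, so you cannot simply ``repeat the construction $m$ times''. Your fallback, producing inside each super-block $Y_j$ a system of $m$ disjoint spanning paths with endpoints matching a prescribed bipartition towards $Y_{j-1}$ and $Y_{j+1}$, is exactly the hard combinatorial statement; a Menger or flow argument gives you $m$ disjoint $Y_{j-1}$--$Y_{j+1}$ paths through $Y_j$, but not that these paths can be chosen to \emph{cover} $Y_j$. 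That covering requirement is what separates a connectivity statement from a Hamiltonicity statement, and nothing in your plan supplies it. Until that step is made rigorous, the argument does not close; this is presumably why the authors left the assertion as a conjecture.
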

In 1983 Durnberger \cite{Durnberger1983} proved the following theorem:
\begin{thm}{\rm{\cite[Theorem 1]{Durnberger1983}}}
	\label{fin Z_p}
	Let~$G$ be a finite group with~$G'\cong\Z_p$.
	Then any Cayley graph of~$G$ contains a Hamilton cycle.\qed
\end{thm}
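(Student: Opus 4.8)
Since this is a theorem of Durnberger, I sketch how one could reprove it. The plan is to combine the Factor Group Lemma (Theorem~\ref{finite factor}) with the classical fact that every connected Cayley graph of a finite abelian group of order at least~$3$ is Hamiltonian. Because $G'\cong\Z_p$ is abelian and nontrivial, $G$ is nonabelian and $G\neq G'$, so $\overline{G}\defi G/G'$ is abelian of order $n\geq 2$; the case $n=2$ gives a dihedral group of order $2p$, which I would dispose of directly, so assume $n\geq 3$. Then $\Gamma(\overline{G},\overline{S})$ has a Hamilton cycle $[\bar x_1,\dots,\bar x_n]$, and the lifted product $w\defi x_1\cdots x_n$ lies in $G'\cong\Z_p$. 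Since $p$ is prime, either $w$ generates $G'$, in which case the Factor Group Lemma applied to the cyclic normal subgroup $G'$ lifts $[\bar x_1,\dots,\bar x_n]$ to a Hamilton cycle of $\Gamma(G,S)$ and we are done, or $w=1$.

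The whole difficulty lies in the case $w=1$. Here the walk $1[x_1,\dots,x_n]$ closes up to a cycle $C$ of length $n$ in $\Gamma(G,S)$ that meets every coset of $G'$ exactly once, so $C$ is a transversal of $G'$; since left translations are automorphisms of $\Gamma(G,S)$ and $G'$ is normal, the translates $\{aC : a\in G'\}$ are $p$ pairwise disjoint cycles whose union is all of $V(\Gamma(G,S))$. I would then try to splice these $p$ cycles into a single Hamilton cycle using the ``cross edges'' of $\Gamma(G,S)$, i.e.\ those labelled by some $x_\ell$ but not lying on any $aC$: each generator acts on $G'\cong\Z_p$ by conjugation as multiplication by a unit modulo $p$, and by exhibiting one cross edge from $C$ to some $aC$ with $a$ a generator of $\Z_p$ and then using the regular $\Z_p$-action that cyclically permutes $\{aC\}$, one should obtain the parallel pairs of edges needed to perform the standard cycle-merging move $p-1$ times. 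As an alternative route to the same case, one may instead try to show that not every Hamilton cycle of $\Gamma(\overline G,\overline S)$ can have trivial voltage: a local rerouting of such a cycle changes $w$ only by a product of commutators, hence by an element of $G'$, so if $w=1$ were forced for all of them then the commutator structure of $G$ would be very restricted.

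I expect this final case to be the main obstacle, and in particular the subcase $w=1$ with $G'$ central -- equivalently $G$ nilpotent of class~$2$. There the cross edges between consecutive translated cycles must be analysed with care to see that the splicing move remains available, and if it does not one would fall back on the Hamiltonicity results for $p$-groups and nilpotent groups, such as Theorem~\ref{finite p-group}, to finish that subcase separately.
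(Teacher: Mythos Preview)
The paper does not prove this theorem at all: it is quoted from Durnberger with a terminal \qed and used only as motivation for Conjecture~\ref{Z_p}. So there is no proof in the paper to compare your attempt against.

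That said, your outline is the right skeleton and is essentially Durnberger's own strategy: pass to the abelian quotient $G/G'$, take a Hamilton cycle there, and apply the Factor Group Lemma when the lifted product $w=x_1\cdots x_n$ is a nontrivial element of $G'\cong\Z_p$. The genuine content of the theorem, however, is exactly the case $w=1$, and here your write-up is a plan rather than a proof. Both routes you propose are reasonable heuristics but neither is completed. For the splicing route, the existence of a cross edge from $C$ to $aC$ with $a$ generating $\Z_p$ is not automatic: if every generator centralises $G'$ then conjugation acts trivially and the ``cross edges'' you want are governed instead by the commutator map $(g,s)\mapsto [g,s]$, which need not hit a generator of $\Z_p$ at the particular positions along $C$ where you need it; you would have to argue from $G'=\Z_p\neq 1$ that some commutator $[x_i,x_j]$ is nontrivial and then show the corresponding local reroute is compatible with the cycle structure. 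For the rerouting route, saying that a local modification changes $w$ by a product of commutators is correct, but you still need to exhibit a specific modification that keeps the image in $\overline G$ a Hamilton cycle while making that product nontrivial; this is the step that costs real work in Durnberger's paper and in the later Keating--Witte and Maru\v{s}i\v{c} refinements.

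In short: your reduction to $w=1$ is clean and matches the literature, but the $w=1$ case is not merely a ``subcase to dispose of'' --- it is the theorem. As written this is a proof sketch with the decisive step missing.
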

 This yields a natural question: What does an infinite group~$G$ with~${G'\cong\Z_p}$ look like?
\begin{lemma}
Let~$G$ be a finitely generated group such that~$|G'|<\infty$.
Then~$G$ has at most two ends. 
\end{lemma}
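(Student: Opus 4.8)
The plan is to reduce everything to the structure of the abelianization $G/G'$ and then transport the number of ends across a quotient by a finite normal subgroup.

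First I would use that, since $G$ is finitely generated and $G'$ is finite, the abelianization $\overline{G} := G/G'$ is a finitely generated abelian group, hence by the structure theorem $\overline{G} \cong \Z^r \oplus T$ for some $r \ge 0$ and some finite group $T$. Writing $\pi\colon G \to \overline{G}$ for the canonical projection, I would put $N := \pi^{-1}(T)$. As $T$ is a subgroup of the abelian group $\overline{G}$ it is normal there, so $N$ is normal in $G$; moreover $|N| = |G'|\cdot|T| < \infty$, so $N$ is finite, and $G/N \cong \overline{G}/T \cong \Z^r$. Next I would observe that quotienting out the finite normal subgroup $N$ does not change the number of ends: the projection $G \to G/N$ is a quasi-isometry (all of its fibres have size $|N|$), and the number of ends of a finitely generated group is a quasi-isometry invariant (see \cite{mei}), the same fact that underlies Lemma~\ref{sub group quasi iso}. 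Hence $G$ has exactly as many ends as $\Z^r$, which is $0$ when $r=0$ (then $G$ is finite), $2$ when $r=1$, and $1$ when $r\ge 2$ — in every case at most two, as claimed.

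The main obstacle is precisely this transport step, i.e.\ knowing that passing from $G$ to $G/N$ with $N$ finite preserves the end count. For $r \le 1$ one can sidestep it and stay inside the paper's toolkit: if $r=0$ then $G$ is finite, and if $r=1$ then $G/N \cong \Z$ with $N$ finite and normal, which is exactly condition (iv) of Theorem~\ref{stallings} and forces $G$ to be two-ended. For $r \ge 2$ one really does need to see that the one-endedness of $\Z^r$ pulls back to $G$. An alternative that stays elementary is to show directly that $G$ is virtually $\Z^r$ and then apply Lemma~\ref{sub group quasi iso}: the conjugation action of $G$ on the finite group $G'$ has finite image in $\mathsf{Aut}(G')$, so $C := C_G(G')$ has finite index in $G$; then $C' \le G'$ is finite and is centralized by $C$, so $C' \le Z(C)$ and $C$ is nilpotent of class $\le 2$ with finite derived subgroup. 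Choosing a finite-index subgroup $D \le C$ with $D/C' \cong \Z^r$ and lifting a free basis to $g_1,\dots,g_r \in D$, the commutator identity $[g_i^{m},g_j]=[g_i,g_j]^{m}$ valid in class $\le 2$ (with $m=|G'|$) shows that $g_1^{m},\dots,g_r^{m}$ pairwise commute; they generate an abelian subgroup of finite index in $D$, whose free part is a finite-index copy of $\Z^r$ inside $G$. Either route completes the proof.
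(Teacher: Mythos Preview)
Your primary argument is correct and is essentially the paper's proof: both decompose the abelianization via the structure theorem and then transport the end count across the quotient by the finite normal subgroup~$G'$; the paper simply cites \cite[Lemma~5.7]{ScottWall} for the transport step where you invoke quasi-isometry invariance. Your second route --- showing directly that $G$ is virtually $\Z^r$ by passing to $C_G(G')$, observing it is nilpotent of class $\le 2$, and using the commutator power identity to produce a finite-index abelian subgroup --- is a genuinely different and more self-contained argument that avoids any black-box end-preservation result and stays entirely within Lemma~\ref{sub group quasi iso}; this buys independence from the Scott--Wall citation at the cost of a little more work.
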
	

\begin{proof}
Since~$G/G'$ is a  finitely generated abelian group, by \cite[5.4.2]{scott} one can see that~$G/G'\cong \Z^n\oplus Z_0$ where~$Z_0$ is a finite abelian group and ${n\in \N\cup\{0\}}$.
As the number of ends of~$\Z^n\oplus Z_0$ is at most two we can conclude that the number of ends of~$G$ is at most two by~\cite[Lemma 5.7]{ScottWall}.
\end{proof}
We close the paper with the following conjecture.
In the following conjecture, we propose an extension of Theorem~\ref{fin Z_p}.
Please note that the methods of the proof of Theorem~\ref{Z_p2} can be used to show the special case of Conjecture~\ref{Z_p} if the generating set does not have empty intersection with the commutator subgroup.
\begin{conj}\label{Z_p}
Let~$G$ be an infinite group with~$G'\cong\Z_p$.
Then any Cayley graph of~$G$ contains a Hamilton circle. 
\end{conj}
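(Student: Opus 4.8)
The plan is to split along the number of ends. Since $G'\cong\Z_p$ is finite and $G$ is finitely generated, the preceding lemma gives that $G$ has at most two ends, and as $G$ is infinite it has exactly one or two. The subgroup $G'$ is normal, so $\overline{G}\defi G/G'$ is a finitely generated abelian group, hence $\overline{G}\cong\Z^n\oplus Z_0$ with $Z_0$ finite; here $G$ is two-ended exactly when $n=1$ and one-ended exactly when $n\geq 2$. In both regimes the outline is the same: produce a Hamiltonian structure on $\Gamma(\overline{G},\overline{S})$ and lift it across the finite fibre $G'\cong\Z_p$, the issue being which tool of Section~\ref{construction} or Section~\ref{Section_add_gen} applies.

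Consider first the two-ended case. If $S\cap G'\neq\emptyset$, then by minimality $S\cap G'=\{k,k^{-1}\}$ with $\langle k\rangle=G'$, and one argues as in Theorem~\ref{Z_p2}: the cosets of $G'$ form finite sets $X_i$ of size $p$ indexed along a Hamilton double ray of the two-ended abelian graph $\Gamma(\overline{G},\overline{S})$ (such a double ray exists by a routine extension of Lemma~\ref{arcZ} to abelian groups that are virtually $\Z$), the $k$-edges make each $\Gamma[X_i]$ a $p$-cycle, consecutive $X_i$ carry a perfect matching, and Lemma~\ref{Cylinder} finishes. This is the special case flagged in the remark preceding Conjecture~\ref{Z_p}. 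If $S\cap G'=\emptyset$, lift a Hamilton double ray $\overline{\Rcal}$ of $\Gamma(\overline{G},\overline{S})$ to a double ray $\Rcal$ of $\Gamma(G,S)$ meeting every coset of $G'$ once; when $p=2$, translating by $k$ yields a disjoint second double ray and $\Rcal\sqcup k\Rcal$ is a Hamilton circle, exactly as in Theorem~\ref{split over 2group}. When $p\geq 3$ the translates $\Rcal,k\Rcal,\dots,k^{p-1}\Rcal$ only give an infinite Hamilton cover of order $p$; to upgrade this to a circle one would instead try to apply Theorem~\ref{inf_group_factor} with a suitable cyclic finite-index subgroup $\langle a\rangle\cong\Z$, aiming for a Hamilton cycle in a Schreier graph whose edge-product equals $a^{\pm 2}$, which by the remark following Theorem~\ref{inf_group_factor} outputs a Hamilton circle. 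Controlling that product is the crux of the two-ended case.

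For the one-ended case, $\Gamma(\overline{G},\overline{S})$ is a Cayley graph of an abelian group of torsion-free rank $n\geq 2$; the plan presupposes such a graph is Hamiltonian (by the structure theory of abelian Cayley graphs, cf.\ \cite{Cayley_HC_M_T}), and fixes a Hamilton circle $\overline{\Ccal}$ of it. If $S\cap G'\neq\emptyset$, each fibre carries a $p$-cycle and consecutive fibres along $\overline{\Ccal}$ carry matchings, so Lemma~\ref{Inf Cylinder} lifts $\overline{\Ccal}$ to a Hamilton circle of $\Gamma(G,S)$. If $S\cap G'=\emptyset$, lifting $\overline{\Ccal}$ again only produces a Hamilton cover of $\Gamma(G,S)$ of order $p$, and there is currently no ``Factor Group Lemma for one-ended groups'' to merge this cover into one circle; this is precisely the phenomenon that Conjecture~\ref{Conj Hamilton cover} isolates.

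Hence the main obstacle, in both cases, is the subcase $S\cap G'=\emptyset$ with $p\geq 3$: the natural lift yields $p$ parallel copies of a Hamiltonian structure, and merging them needs edges between copies that a general generating set need not contain, while the conjugation action of $G$ on $G'\cong\Z_p$ can be nontrivial (for instance $G=(\Z_p\rtimes\Z_q)\times\Z^2$ with $q\mid p-1$ has $G'\cong\Z_p$ and is one-ended), which breaks the symmetric rerouting underlying the $p=2$ cases. I expect a full proof will need either a sharper structural classification of infinite groups with $G'\cong\Z_p$ refining the preceding lemma and Theorem~\ref{stallings}, or a strengthening of Theorem~\ref{inf_group_factor} that tracks the generator-product modulo $p$-th powers; that the finite analogue Theorem~\ref{fin Z_p} is already proved by an intricate analysis of exactly this product suggests the infinite version will not be short.
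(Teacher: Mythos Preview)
The statement is Conjecture~\ref{Z_p}, which the paper explicitly leaves \emph{open}; there is no proof in the paper to compare against. The only claim the paper makes is the remark immediately preceding the conjecture: the methods of Theorem~\ref{Z_p2} handle the special case $S\cap G'\neq\emptyset$. Your write-up is not a proof either, and you are clear about this: you give a case split, dispose of the subcases the paper's tools reach, and then correctly isolate the genuine obstruction (the subcase $S\cap G'=\emptyset$ with $p\geq 3$, in both the one- and two-ended regimes). That diagnosis matches the paper's, and your framing via the Hamilton-cover phenomenon after Theorem~\ref{inf_group_factor} is exactly the right link.

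Two small points on the partial cases you do claim. First, the appeal to ``minimality'' to get $S\cap G'=\{k,k^{-1}\}$ is unjustified, since the conjecture concerns arbitrary finite $S$; this is harmless, however, because any nontrivial element of $S\cap G'$ already generates $G'\cong\Z_p$ and supplies the fibre $p$-cycles needed for Lemma~\ref{Cylinder} or Lemma~\ref{Inf Cylinder}. Second, in the one-ended subcase with $S\cap G'\neq\emptyset$ you \emph{presuppose} that every Cayley graph of a one-ended finitely generated abelian group has a Hamilton circle. That is not proved in this paper, and while it is widely believed, it is not a triviality; so even this ``easy'' subcase rests on an external input that you should flag as an assumption rather than a fact. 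Similarly, the ``routine extension of Lemma~\ref{arcZ}'' to arbitrary two-ended abelian groups $\Z\oplus Z_0$ is genuinely routine, but it is not in the paper and should be stated as a separate lemma if you ever write this up.
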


%

\bibliographystyle{plain}
\bibliography{collective.bib}

\begin{thebibliography}{10}

\bibitem{BabaiLovasz}
L.~Babai.
\newblock Automorphism groups, isomorphism, reconstruction.
\newblock {\em Handbook of Combinatorics}, Vol. 2:1447--1540, 1996.

\bibitem{babai1997growth}
L.~Babai.
\newblock The growth rate of vertex-transitive planar graphs.
\newblock {\em Association for Computing Machinery, New York, NY (United
  States)}, pages 564--573, 1997.

\bibitem{bogo}
O.~Bogopolski.
\newblock {\em Introduction to group theory}.
\newblock European Mathematical Society (EMS). EMS Textbooks in Mathematics,
  Z\"urich, 2008.

\bibitem{degree}
H.~Bruhn and M.~Stein.
\newblock On end degrees and infinite circuits in locally finite graphs.
\newblock {\em Combinatorica}, 27:269--291, 2007.

\bibitem{diestelBook10noEE}
R.~Diestel.
\newblock {\em {Graph Theory}}.
\newblock Springer, 4th edition, 2010.

\bibitem{TopSurveyI}
R.~Diestel.
\newblock Locally finite graphs with ends: a topological approach. {I}.\
  {B}asic theory.
\newblock {\em Discrete Math.}, 311:1423--1447, 2011.

\bibitem{Ends}
R.~Diestel and D.~K{\"u}hn.
\newblock Graph-theoretical versus topological ends of graphs.
\newblock {\em J. Combin. Theory Ser. B}, 87:197--206, 2003.

\bibitem{Droms}
C.~Droms, B.~Servatius, and H.~Servatius.
\newblock Connectivity and planarity of cayley graphs.
\newblock {\em Beitr{\"a}ge Algebra Geom}, 39(no. 2):269--282, (1998).

\bibitem{Durnberger1983}
E.~Durnberger.
\newblock Connected cayley graphs of semi-direct products of cyclic groups of
  prime order by abelian groups are hamiltonian.
\newblock {\em Discrete Math.}, 46:55--68, 1983.

\bibitem{Fleischner}
H.~Fleischner.
\newblock The square of every two-connected graph is hamiltonian.
\newblock {\em J. Combin. Theory Ser. B}, 16:29--34, 1974.

\bibitem{freu31}
H.~Freudenthal.
\newblock \"{U}ber die {E}nden topologischer {R}{\"a}ume und {G}ruppen.
\newblock {\em Math.\ Zeitschr.}, 33:692--713, 1931.

\bibitem{Ageloscubicplanar}
A.~Georgakopoulos.
\newblock The planar cubic cayley graphs.
\newblock {\em To appear in Memoirs of the AMS}.

\bibitem{AgelosFleisch}
A.~Georgakopoulos.
\newblock Infinite hamilton cycles in squares of locally finite graphs.
\newblock {\em Adv. Math.}, 220:670--705, 2009.

\bibitem{Ageloscubic}
A.~Georgakopoulos.
\newblock The planar cubic cayley graphs of connectivity 2.
\newblock {\em European Journal of Combinatorics}, 64:152--169, 2017.

\bibitem{halin64}
R.~Halin.
\newblock {\"U}ber unendliche {W}ege in {G}raphen.
\newblock {\em Math.\ Annalen}, 157:125--137, 1964.

\bibitem{hamlehpott}
M.~Hamann, F.~Lehner, and J.~Pott.
\newblock Extending cycles locally to hamilton cycles.
\newblock {\em Electronic Journal of Combinatorics}, 23(Paper 1.49):17pp, 2016.

\bibitem{Matthias}
M.~Hamman.
\newblock Personal communication.
\newblock 2016.

\bibitem{Heuer_Hamilton3}
K.~Heuer.
\newblock Hamiltonicity in locally finite graphs: two extensions and a
  counterexample.
\newblock {\em arXiv:1701.06029}.

\bibitem{Karl1}
K.~Heuer.
\newblock A sufficient condition for hamiltonicity in locally finite graphs.
\newblock {\em European Journal of Combinatorics}, 45:97--114, 2015.

\bibitem{Karl2}
K.~Heuer.
\newblock A sufficient local degree condition for hamiltonicity in locally
  finite claw-free graphs.
\newblock {\em European Journal of Combinatorics}, 55 Issue C:82--99, 2016.

\bibitem{commutatorsubgroup}
K.~Keating and D.~Witte.
\newblock On hamilton cycle in cayley graphs in groups with cyclic commutator
  subgroup.
\newblock In {\em Cycles in Graphs}, volume 115, pages 89--102. North-Holland,
  1985.

\bibitem{mei}
J.~Meier.
\newblock {\em Groups, Graphs and Trees: An Introduction to the Geometry of
  Infinite Groups}.
\newblock Cambridge, 2008.

\bibitem{Cayley_HC_M_T}
B.~Miraftab and T.~R\"uhmann.
\newblock Hamilton circles in cayley graphs.
\newblock {\em arXiv:1609.01119}, 2016.

\bibitem{two-ended}
B.~Miraftab and T.~R\"uhmann.
\newblock Two-ended graphs and groups.
\newblock {\em preprint.}, 2017.

\bibitem{HP_Cayley_Pak}
I.~Pak and R.~Radoi\v{c}i\'{c}.
\newblock Hamiltonian paths in cayley graphs.
\newblock {\em Discrete Math}, 309(17):5501--5508, 2009.

\bibitem{campan1}
R.A. Rankin.
\newblock A campanological problem in group theory.
\newblock {\em Proc. Camb. Phil. Soc.}, 44:17--25, 1948.

\bibitem{campan}
R.A. Rankin.
\newblock A campanological problem in group theory ii.
\newblock {\em Proc. Camb. Phyl. Soc}, 62:11--18, 1966.

\bibitem{rap}
E.~Rapaport-Strasser.
\newblock Cayley color groups and hamilton lines.
\newblock {\em Scr. Math}, 24:51--58, 1959.

\bibitem{ScottWall}
P.~Scott and T.~Wall.
\newblock Topological methods in group theory, homological group theory.
\newblock {\em (Proc. Sympos., Durham, 1977), London Math. Soc. Lecture Note
  Ser., vol. 36}, Cambridge Univ. Press, Cambridge-New York:137--203, 1979.

\bibitem{scott}
W.~R. Scott.
\newblock {\em Group Theory}.
\newblock Prentice-Hall, Englewood Cliffs, NJ, 1964.

\bibitem{ThomassenG2}
C.~Thomassen.
\newblock Hamiltonian paths in squares of infinite locally finite blocks.
\newblock {\em Ann. Discrete Math.}, 3:269--277, 1978.

\bibitem{wittedigraphs}
D.~Witte.
\newblock On hamiltonian circuits in cayley diagrams.
\newblock {\em Discrete Math.}, 38:99--108, 1982.

\bibitem{p-group}
D.~Witte.
\newblock Cayley digraphs of prime-power order are hamiltonian.
\newblock {\em J. Combin. Theory Ser. B}, 40:107--112, 1986.

\bibitem{wittesurvey}
D.~Witte and J.A. Gallian.
\newblock A survey: Hamiltonian cycles in cayley graphs.
\newblock {\em Discrete Math}, 51:293--304, 1984.

\end{thebibliography}

   \end{document}